\newtheorem{theorem}{Theorem}[section]
\newtheorem{lemma}[theorem]{Lemma}
\newtheorem{proposition}[theorem]{Proposition}
\newtheorem{definition}[theorem]{Definition}
\newtheorem{corollary}[theorem]{Corollary}
\newtheorem{prop}{Proposition}[section]
\newtheorem{remark}[prop]{Remark}
\makeatletter \@addtoreset{equation}{section} \makeatother
\def\ddt{\frac{d}{dt}}
\def\ppt{\frac{\partial}{\partial t}}
\def\RR{{\mathrm R}}
\def\WW{{\mathrm W}}
\def\EE{{\mathrm E}}
\def\HH{{\mathrm H}}
\def\Rc{{\mathrm {Rc}}}
\def\SS{{\mathrm S}}
\def\Rm{{\mathrm {Rm}}}
\def\He{\mathrm {Hess}}
\begin{document}

\title{The Weyl Tensor of Gradient Ricci Solitons}

\author{Xiaodong Cao$^*$}

\address{Department of Mathematics,
 Cornell University, Ithaca, NY 14853-4201}
\email{cao@math.cornell.edu, hungtran@math.cornell.edu}
\thanks{$^*$This work was partially supported by grants from the Simons Foundation (\#266211 and \#280161)}

\author{Hung Tran}


\renewcommand{\subjclassname}{%
  \textup{2000} Mathematics Subject Classification}
\subjclass[2000]{Primary 53C44; Secondary 53C21}

\date{\today}



\begin{abstract} This paper derives new identities for the Weyl tensor on a gradient Ricci soliton, particularly in dimension four. First, we prove a Bochner-Weitzenb\"ock type formula for the norm of the self-dual Weyl tensor and discuss its applications, including connections between geometry and topology. In the second part, we are concerned with the interaction of different components of Riemannian curvature and (gradient and Hessian of) the soliton potential function. The Weyl tensor arises naturally in these investigations.  Applications here are rigidity results. 
\end{abstract}
\maketitle
\tableofcontents
\markboth{Xiaodong Cao and Hung Tran} {Weyl Tensor of Gradient Ricci Solitons}

\section{\textbf{Introduction}}

The Ricci flow,  which was first introduced by R. Hamilton in \cite{H3}, describes a one-parameter family of smooth metrics $g(t)$, $0\leq t<T\leq \infty$, on a closed n-dimensional manifold  $M^n$, by  the equation
 \begin{equation}
 \label{RF}
 \ppt g(t)=-2\text{Rc}(t). \end{equation}
 The subject has been studied intensively, particularly in the last decade thanks to seminal contributions by G. Perelman in his proof of the Poincar\'e conjecture (cf. \cite{perelman1, perelman2}). It also gains more popularity after playing a key role in the proofs of the classification theorem for  manifolds with 2-positive curvature operators by C. B\"ohm and B. Wilking \cite{bohmwilking}, and  the Differentiable Sphere Theorem of S. Brendle and R. Schoen \cite{  bs091, bs072}.\\
 
 As a weakly parabolic system, the Ricci flow can develop finite-time singularities and, consequently, the study of singularity models becomes essentially crucial. In this paper, we are concerned with gradient Ricci solitons (GRS), which are self-similar solutions of Hamilton\rq{}s Ricci flow (\ref{RF}) and arise naturally in the analysis of singularities. A GRS $(M,g, f, \lambda)$ is a Riemannian manifold  endowed with a special structure given by a (soliton) potential function $f$, a constant $\lambda$, and the equation 
 \begin{equation}
 \label{grs}
 \Rc+\text{Hess}{f}=\lambda g. \end{equation}
 Depending on the sign of $\lambda$, a GRS is called shrinking (positive), steady (zero), or expanding (negative). In particular an Einstein manifold N can be considered as a special case of a GRS where $f$ is a constant and $\lambda$ becomes the Einstein constant. A less trivial example is a Gaussian soliton $(\mathbb{R}^{k}, g_{\text{sd}}, \lambda \frac{|x|^2}{2},\lambda)$ with $g_{\text{sd}}$ being the standard metric on Euclidean space. It is interesting to note that $\lambda$ can be an arbitrary real number and that the Gaussian soliton can be either shrinking, steady or expanding. Furthermore, a combination of those two above, by the notation of P. Petersen and W. Wylie \cite{pewy09}, is called a \textit{rank k rigid GRS}, namely a quotient of $N\times \mathbb{R}^k$. Other nontrivial examples of GRS are rare and mostly K\"ahler, see \cite{caohd96, fik03}.\\
 
 In recent years, following the interest in the Ricci flow, there have been various efforts to study the geometry and classification of  GRS's; for example, see \cite{caohd09} and the citations therein. In particular, the low-dimensional cases ($n=2, 3$) are relatively well-understood. For $n=2$,  Hamilton \cite{Hsurface} completely classified shrinking gradient solitons with bounded curvature and showed that they must be either the round sphere, projective space, or Euclidean space with standard metric. For $n=3$, utilizing the Hamilton-Ivey estimate, Perelman \cite{perelman2} proved an analogous theorem. Other significant results include recent development of Brendle \cite{b12rot} showing that a non-collapsed steady GRS must be rotationally symmetric and is, therefore, isometric to the Bryant soliton. \\

In higher dimensions, the situation is more subtle mainly due to the non-triviality of the Weyl tensor ($\WW$) which is vacuously zero for dimension less than four. One general approach to the classification problem so far has been imposing certain restrictions on the curvature operator. An analogue of Hamilton-Perelman results was obtained by A. Naber proving that a four dimensional complete non-compact GRS with bounded nonnegative curvature operator must be a finite quotient of $\RR^4$,  $S^2\times \RR^2$ or $S^3\times \RR$ \cite{naber07}. In \cite{kb08}, B. Kotschwar classified all rotationally symmetric GRS's with given diffeomorphic types on $\RR^n$, $S^{n-1}\times \RR$ or $S^n$. Note that any rotationally symmetric Riemannian manifold has vanishing Weyl tensor. 
 
Thus, a natural development is to impose certain conditions on that Weyl tensor. If the dimension is at least four, then a complete shrinking GRS with vanishing Weyl tensor must be a finite quotient of $\RR^n$, or $S^{n-1}\times \RR$ or $S^n$  following the works of \cite{niwa08, zhangzh09, cw07, pewy10}; a steady GRS is flat or rotationally symmetric (that is, a Bryant Soliton) by \cite{caochen12}. The assumption $\WW\equiv 0$ can be weakened to $\delta\WW\equiv 0$, a closed or non-compact shrinking GRS must be rigid \cite{cw07, fega11, munse09}; or in dimension four, to the vanishing of self-dual Weyl tensor only, a shrinking GRS with bounded curvature must be a finite quotient of $\RR^4$, $S^{3}\times \RR$, $S^n$, or $CP^2$, and steady GRS must be a Bryant soliton or flat \cite{cw11}. There are some other classifications based on, for instance, Bach flatness \cite{caochen11} or assumptions on the radial sectional curvature \cite{pewy10}.\\
 
As the major obstruction to understand  GRS in higher dimensions is the non-triviality of the Weyl tensor, this paper is devoted to studying the delicate role of the Weyl tensor within a gradient soliton structure. Our perspective here is to view a GRS as both  generalization of an Einstein manifold as well as a self-similar solution to the Ricci flow. 
In particular, this paper derives several new identities on the Weyl tensor of GRS in dimension four. In the first part, we prove the following Bochner-Weitzenb\"ock type formula for the norm of the self-dual Weyl tensor using flow equations and some ideas related to Einstein manifolds. 

\begin{theorem} 
\label{WSoliton}
Let $(M,g,f,\lambda)$ be a four-dimensional GRS. Then we have the following Bochner-Weitzenb\"ock formula:
\begin{align}\label{Wsolitoneqn}
\Delta_{f} |\WW^{+}|^2=&2|\nabla \WW^{+}|^2+4\lambda |\WW^{+}|^2-36\text{det}\WW^{+} -\left\langle{\Rc\circ \Rc,\WW^{+}}\right\rangle\nonumber\\
=&2|\nabla \WW^{+}|^2+4\lambda |\WW^{+}|^2-36\text{det}\WW^{+} -\left\langle{\He{f}\circ\He{f},\WW^{+}}\right\rangle .
\end{align}
\end{theorem}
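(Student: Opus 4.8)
The plan is to treat $\WW^{+}$ pointwise as a trace-free, symmetric endomorphism of the rank-three bundle $\Lambda^{+}$ of self-dual two-forms — the splitting $\Lambda^{2}=\Lambda^{+}\oplus\Lambda^{-}$ being parallel, so that $\nabla$ respects it — and to combine two inputs: the behaviour of $\WW^{+}$ under Ricci flow, read off from the evolution of the curvature operator in dimension four, and the algebraic constraints forced by the soliton equation \eqref{grs} together with the contracted second Bianchi identity. Before anything else I would record the linear algebra on $\Lambda^{+}$: if $w_{1}\le w_{2}\le w_{3}$ are the eigenvalues of $\WW^{+}$, then $w_{1}+w_{2}+w_{3}=0$, $|\WW^{+}|^{2}=\sum w_{i}^{2}$, $\det\WW^{+}=w_{1}w_{2}w_{3}$, and — by Cayley–Hamilton for trace-free $3\times3$ symmetric matrices, equivalently for Hamilton's $\#$-square on $\Lambda^{+}\cong\mathfrak{so}(3)$ — $\langle(\WW^{+})^{2},\WW^{+}\rangle=3\det\WW^{+}$; this is the source of the cubic term. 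I would also dispose of the equivalence of the two displayed formulas up front: since $\WW^{+}$ is totally trace-free, every term of the form $\langle g\circ(\cdot),\WW^{+}\rangle$ vanishes, so substituting $\He{f}=\lambda g-\Rc$ into $\He{f}\circ\He{f}$ leaves only $\Rc\circ\Rc$ after discarding such terms; hence $\langle\He{f}\circ\He{f},\WW^{+}\rangle=\langle\Rc\circ\Rc,\WW^{+}\rangle$, and it suffices to prove the first identity.

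Next I would bring in the flow. Realize the soliton as the time-zero slice of the self-similar solution $g(t)=(1-2\lambda t)\phi_{t}^{*}g$ of \eqref{RF}, with $\phi_{t}$ generated by $\tfrac{1}{1-2\lambda t}\nabla f$. Because $|\WW^{+}|^{2}$ is homogeneous of weight $-2$ under scaling, differentiating $|\WW^{+}|^{2}_{g(t)}$ at $t=0$ gives $\partial_{t}|\WW^{+}|^{2}=4\lambda|\WW^{+}|^{2}+\langle\nabla f,\nabla|\WW^{+}|^{2}\rangle$. On the other hand, restricting Hamilton's evolution $\partial_{t}\Rm=\Delta\Rm+\Rm^{2}+\Rm^{\#}$ to the $\Lambda^{+}$-block $A=\tfrac{R}{12}\,\mathrm{Id}+\WW^{+}$ (and using the evolution of $R$) gives $\partial_{t}|\WW^{+}|^{2}=\Delta|\WW^{+}|^{2}-2|\nabla\WW^{+}|^{2}+\mathcal{R}$ with $\mathcal{R}$ a cubic curvature reaction. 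Equating the two expressions turns $\partial_{t}$ into $\Delta_{f}=\Delta-\langle\nabla f,\nabla\cdot\rangle$ and yields $\Delta_{f}|\WW^{+}|^{2}=2|\nabla\WW^{+}|^{2}+4\lambda|\WW^{+}|^{2}-\mathcal{R}$, so the theorem becomes the claim that $\mathcal{R}=36\det\WW^{+}+\langle\Rc\circ\Rc,\WW^{+}\rangle$, with \emph{no} surviving multiple of $R|\WW^{+}|^{2}$.

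To pin $\mathcal{R}$ down, and to keep things self-contained, I would run in parallel the Einstein-manifold argument. The Weitzenb\"ock (Derdzinski--Bourguignon) identity for $\WW^{+}\in S^{2}_{0}\Lambda^{+}$ expresses $\nabla^{*}\nabla\WW^{+}$ through a quadratic expression in $\WW^{+}$, the scalar curvature, and $\delta\WW^{+}$; and on a GRS the contracted second Bianchi identity together with \eqref{grs} identifies the Cotton tensor, hence $\delta\WW^{+}$, explicitly in terms of $\WW^{+}(\nabla f,\cdot,\cdot,\cdot)$. Feeding the latter into the former gives a master identity of the schematic shape
\[
\nabla^{*}\nabla\WW^{+}=6\,\bigl((\WW^{+})^{2}\bigr)^{\mathrm{tf}}-2\lambda\,\WW^{+}+(\text{terms in }\nabla f\text{ and }\He{f})+\tfrac12\bigl(\Rc\circ\Rc\bigr)^{\mathrm{tf}},
\]
where $(\cdot)^{\mathrm{tf}}$ denotes the $\Lambda^{+}$-trace-free part; for $\lambda$-Einstein metrics $R=4\lambda$ and the last three groups of terms disappear, recovering Derdzinski's equation $\Delta\WW^{+}=\tfrac{R}{2}\WW^{+}-6\,((\WW^{+})^{2})^{\mathrm{tf}}$. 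Then I would pair with $\WW^{+}$ and use the Bochner identity $\tfrac12\Delta|\WW^{+}|^{2}=\langle\Delta\WW^{+},\WW^{+}\rangle+|\nabla\WW^{+}|^{2}$ (with $\Delta=-\nabla^{*}\nabla$): the quadratic terms contribute $-6\langle(\WW^{+})^{2},\WW^{+}\rangle+2\lambda|\WW^{+}|^{2}=-18\det\WW^{+}+2\lambda|\WW^{+}|^{2}$ by the algebra above; the $\nabla f$ and $\He{f}$ terms collapse, after one application of the product rule, into $\tfrac12\langle\nabla f,\nabla|\WW^{+}|^{2}\rangle$, which converts the left side into $\tfrac12\Delta_{f}|\WW^{+}|^{2}$; and the last term becomes $-\tfrac12\langle\Rc\circ\Rc,\WW^{+}\rangle$. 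Multiplying by two gives \eqref{Wsolitoneqn}.

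The main obstacle is the constant-and-sign bookkeeping in the middle step, and it splits into two points. First, one must check that \emph{no} multiple of $R|\WW^{+}|^{2}$ survives: on a general GRS the scalar curvature is not constant, so such a term could not be absorbed into $4\lambda|\WW^{+}|^{2}$, and its disappearance is a genuine feature of the four-dimensional $\Lambda^{+}$-algebra — the shift $A=\tfrac{R}{12}\mathrm{Id}+\WW^{+}$ together with the evolution of $R$ arranges that the trace-free part of the reaction is scalar-curvature free, equivalently the $\tfrac{R}{3}\mathrm{Id}-2\WW^{+}$ Weitzenb\"ock curvature term of $\Lambda^{+}$ feeds $R$ only into the \emph{trace} of $(\WW^{+})^{2}$, which $\WW^{+}$ being trace-free annihilates. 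Second, and more laboriously, one must track the first-order $\nabla f$ and algebraic $\He{f}$ terms produced by $\delta\WW^{+}$: commuting covariant derivatives and using \eqref{grs} and the contracted second Bianchi identity, one has to verify that everything beyond $\tfrac12\langle\nabla f,\nabla|\WW^{+}|^{2}\rangle$ recombines — after replacing $\He{f}$ by $\lambda g-\Rc$ and killing the $\lambda g$-pieces against trace-free $\WW^{+}$ — exactly into $-\tfrac12\langle\Rc\circ\Rc,\WW^{+}\rangle$, with the stated coefficient and sign. Once these two computations are carried out, \eqref{Wsolitoneqn}, including the equivalence of the $\Rc\circ\Rc$ and $\He{f}\circ\He{f}$ forms, follows.
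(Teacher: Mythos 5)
Your proposal is correct in outline and rests on the same underlying mechanism as the paper: realize the soliton as the self-similar solution $g(t)=(1-2\lambda t)\phi_t^{*}g$, compute $\partial_t|\WW^{+}|^2$ at $t=0$ once from self-similarity (giving $4\lambda|\WW^{+}|^2+\langle\nabla f,\nabla|\WW^{+}|^2\rangle$) and once from the Ricci flow evolution, and equate to convert $\partial_t$ into $\Delta_f$. Where you genuinely diverge from the paper's Section~3 argument is in how the reaction term is identified: the paper first derives a component formula for $\Delta_f\WW_{ijkl}$ in all dimensions (Lemma \ref{generalw}, via the pointwise evolution of the Weyl tensor), and only then specializes to dimension four using Berger's normal form and an explicit evaluation of the $C_{ijkl}$-terms on the eigenbasis of $\WW^{+}$; you instead work directly with the self-dual block $A^{+}=\frac{\SS}{12}\,\mathrm{Id}+\WW^{+}$ of Hamilton's operator evolution $\partial_t\Rm=\Delta\Rm+\Rm^2+\Rm^{\sharp}$, together with the algebra $\langle(\WW^{+})^2,\WW^{+}\rangle=\langle(\WW^{+})^{\sharp},\WW^{+}\rangle=3\,\text{det}\,\WW^{+}$. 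This block route is legitimate and is in fact exactly what the paper carries out in its Appendix (Lemma \ref{evolWplus}, identity (\ref{WCC}), Theorem \ref{evolW2}); combined with your self-similarity identity it yields the theorem, and it buys a transparent explanation of why no $\SS|\WW^{+}|^2$ term survives: the scalar curvature enters the $\Lambda_2^{+}$-block only through multiples of $I^{+}$, which pair to zero with the trace-free $\WW^{+}$. The one step you defer as ``bookkeeping'' is the only nontrivial computation left on this route: pairing the mixed-block contribution $2(CC^{T}-\tfrac{1}{3}|C|^2I^{+})$ with $\WW^{+}$ must give exactly $\langle\Rc\circ\Rc,\WW^{+}\rangle$, i.e.\ the identity $\langle\WW^{+},CC^{T}\rangle=\tfrac14\langle\WW^{+},\Rc\circ\Rc\rangle$; this does check out (diagonalize $\Rc$, expand $C$ in the basis (\ref{Hodgebase}), and use that $\WW^{+}$ is trace-free), so your argument closes, but it should be written out rather than flagged as an obstacle. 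Your reduction of the second displayed equality to $\langle g\circ A,\WW^{+}\rangle=0$ for symmetric $A$ is the same observation as Remark \ref{HW0} in the paper, and the parallel Einstein-style Weitzenb\"ock strand you sketch (with the Cotton tensor fed into a Derdzinski-type identity) is redundant once the flow strand is completed, and in its present schematic form it is the weaker half of your writeup.
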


For the relevant notation, see Section 2. Identity (\ref{Wsolitoneqn}) potentially has several applications and we will present a couple of them in this paper including a gap theorem. More precisely, if the GRS is not locally conformally flat and the divergence of the Weyl tensor is relatively small, then the $L_{2}$-norm of the Weyl tensor is bounded below by a topological constant (cf. Theorem \ref{gapW}). The proof, in a similar manner to that of \cite{g00}, uses some ideas from the solution to the Yamabe problem.\\

In the second part, we are mostly concerned with the interaction of different curvature components, gradient and Hessian of the potential function. In particular, an interesting connection is illustrated by the following integration by parts formula.

\begin{theorem}
\label{intanddiv}
Let $(M,g,f,\lambda)$ be a closed GRS. Then we have the following identity:
\begin{align}
\int_{M}\left\langle{\WW, \Rc\circ \Rc}\right\rangle&= \int_{M}\left\langle{\WW, \He{f}\circ \He{f}}\right\rangle= \int_{M}\WW(\text{Hess}f, \text{Hess}f)=\int_{M}\WW_{ijkl}f_{ik}f_{jl}\nonumber\\
&=\frac{1}{n-3}\int_{M}\left\langle{\delta \WW, (n-4)M+(n-2)P}\right\rangle. 
\end{align}
In particular, in dimension four, the identity becomes
\begin{equation}
\int_{M}\left\langle{\WW, \Rc\circ \Rc}\right\rangle=4\int_{M}|\delta \WW|^2.
\end{equation}
\end{theorem}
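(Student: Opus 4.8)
The plan is first to notice that the first three displayed equalities are purely algebraic, holding pointwise, so that only the last one uses the closedness of $M$. Since $\WW$ has the symmetries of an algebraic curvature tensor and is \emph{totally} trace-free, every contraction of $\WW$ with the metric vanishes; unwinding the definition of the symmetric product $\circ$ one gets $\langle \WW,\Rc\circ\Rc\rangle=\WW_{ijkl}R_{ik}R_{jl}$ and $\langle\WW,\He f\circ\He f\rangle=\WW(\He f,\He f)=\WW_{ijkl}f_{ik}f_{jl}$ at every point, and substituting the soliton equation $\He f=\lambda g-\Rc$ kills the terms proportional to $\lambda g$, leaving $\WW_{ijkl}f_{ik}f_{jl}=\WW_{ijkl}R_{ik}R_{jl}$.

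For the last equality I would start from $\int_M\WW_{ijkl}f_{ik}f_{jl}$, write $f_{ik}=\nabla_i\nabla_k f$, and integrate the $\nabla_i$ by parts (no boundary term since $M$ is closed) to get
\[
\int_M\WW_{ijkl}f_{ik}f_{jl}=-\int_M(\delta\WW)_{jkl}\,f_{jl}\,\nabla_k f-\int_M\WW_{ijkl}\,(\nabla_i f_{jl})\,\nabla_k f .
\]
In the first integral, trace-freeness of $\delta\WW$ removes the $\lambda g_{jl}$ part of $f_{jl}=\lambda g_{jl}-R_{jl}$, so it is already of the form $\int_M\langle\delta\WW,\cdot\rangle$. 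In the second integral I would use $\nabla_i f_{jl}=-\nabla_i R_{jl}$ (differentiating the soliton equation), antisymmetrize in $(i,j)$ using the symmetry of $\WW$, and apply the GRS commutation identity $\nabla_i R_{jl}-\nabla_j R_{il}=\Rm_{ijlm}\nabla^m f$; this trades the covariant derivative of $\Rc$ for an algebraic curvature term and turns the second integral into $\tfrac12\int_M\WW_{ijkl}\Rm_{ijlm}\,\nabla_k f\,\nabla^m f$.

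The heart of the argument is to bring this curvature term into $\langle\delta\WW,\cdot\rangle$ form. I would decompose $\Rm$ into its Weyl, Schouten and scalar pieces: the scalar piece drops by trace-freeness, the Schouten piece contributes contractions of $\WW$ with $\Rc$ (equivalently with $P$), and the Weyl piece contributes the quadratic term $\WW_{ijkl}\WW_{ijlm}\nabla_k f\,\nabla^m f$. To dispose of the latter one uses the \emph{soliton-specific} formula for $\delta\WW$ — obtained by feeding the commutation identity and $\nabla R=2\Rc(\nabla f)$ into the general identity $(\delta\WW)_{jkl}=\tfrac{n-3}{n-2}(\nabla_k R_{jl}-\nabla_l R_{jk})-(\text{trace terms})$ — which on a GRS expresses $(\delta\WW)_{jkl}$ as $\tfrac{n-3}{n-2}\WW_{jklm}\nabla^m f$ plus an explicit linear combination of $\Rc\ast\nabla f$ and $R\,g\ast\nabla f$. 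Substituting this back converts the $\WW\ast\WW$ and $\WW\ast\Rc$ terms into $\langle\delta\WW,\cdot\rangle$ terms, and after collecting the $n$-dependent constants the two integrals recombine into $\tfrac1{n-3}\int_M\langle\delta\WW,(n-4)M+(n-2)P\rangle$ with $M,P$ the $3$-tensors of Section 2.

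I expect the bookkeeping in this last step to be the main obstacle: the term $\tfrac12\int_M\WW_{ijkl}\WW_{ijlm}\nabla_k f\,\nabla^m f$ is a priori a genuinely new quartic quantity with no reason to be expressible through $\delta\WW$ on a general manifold — it is only the soliton structure, via the GRS expression for $\delta\WW$, that closes the loop — and carrying the correct dimension-dependent coefficients through both the Weyl decomposition and this substitution is delicate. Once the general identity is in hand, the dimension-four case is immediate: $(n-4)=0$ kills the $M$-term, $\tfrac1{n-3}=1$ and $n-2=2$, and the dimension-four relation between $P$ and $\delta\WW$ from Section 2 turns $2\int_M\langle\delta\WW,P\rangle$ into $4\int_M|\delta\WW|^2$.
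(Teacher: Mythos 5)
Your proposal is correct and is essentially the paper's own argument: the pointwise reductions, the integration by parts, and the handling of the first resulting term are identical, and your middle step — rewriting the second term via the soliton Bianchi identity (with the paper's conventions it reads $\nabla_i\Rc_{jl}-\nabla_j\Rc_{il}=\RR_{jilp}\nabla^p f$, cf. (\ref{equaP})), decomposing $\Rm$, and then invoking the GRS formula for $\delta\WW$ — is exactly the content of Lemma \ref{interaction} (equation (\ref{equaWf})), Lemma \ref{div} (equation (\ref{equaWdiv})) and the $D$-tensor identity (\ref{equaD}), which the paper applies in the equivalent order of pairing $i_{\nabla f}\WW$ against $P+\tfrac{1}{2(n-1)}Q\propto\delta\WW$. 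The bookkeeping you defer does close: the scalar and $N$-type pieces die because every metric contraction of $\WW$, and likewise of $\delta\WW$ (e.g.\ $(\delta\WW)_{iij}=0$), vanishes — this trace-freeness is the real content of the orthogonality relations in Corollary \ref{QD0} — and the $\WW\ast\Rc\ast\nabla f\ast\nabla f$ terms coming from the Schouten piece cancel exactly against those created by substituting $i_{\nabla f}\WW=\tfrac{n-2}{n-3}\,\delta\WW+(\text{terms in }Q,M,N)$ into your quartic Weyl term, leaving $\tfrac{n-2}{n-3}\int\langle\delta\WW,i_{\nabla f}\WW\rangle$, which re-expands via (\ref{equaWf}) to the stated combination of $P$ and $M$. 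One caveat in the $n=4$ specialization: $\delta\WW$ is proportional to $P+\tfrac{Q}{6}$, not to $P$ alone, so passing from $2\int\langle\delta\WW,P\rangle$ to $4\int|\delta\WW|^2$ uses $\langle\delta\WW,Q\rangle=0$ once more, not any constancy of $\SS$.
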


\begin{remark}
For definitions of $M$ and $P$, see Section 5. 
\end{remark}

\begin{remark}
 This result exposes the intriguing interaction between the Weyl tensor and the potential function f on a GRS. It will be interesting  to extend those identities to a (possibly non-compact) smooth metric measure space or generalized Einstein manifold.
 \end{remark}
 
\begin{remark}
\label{intanddivplus}
In dimension four, the statement also holds if replacing $\WW$ by $\WW^{\pm}$, see Corollary \ref{intanddivpluscor}.
\end{remark}  

The interactions of various curvature components and the soliton potential function can be applied to study the classification problem. For example, Theorem \ref{rigidRc} asserts rigidity of the Ricci curvature tensor in dimension four. More precisely, if the Ricci tensor at each point has at most two eigenvalues with multiplicity one and three, then any such closed GRS must be rigid. It is interesting to compare this result with classical classification results of the Codazzi tensor, which requires both  distribution of eigenvalues and information on the first derivative (see \cite[Chapter 16, Section C]{besse}). \\

This paper is organized as follows. In Section 2, we fix our notation and collect some preliminary results. Section 3 provides a proof of Theorem \ref{WSoliton} using the Ricci flow technique. Section 4 gives some immediate applications of the new Bochner-Weitzenb\"ock type formula including the aforementioned gap theorem. In Section 5, we first discuss a general framework to study the interaction of different components of the curvature with the potential function, and then prove Theorem \ref{intanddiv}. In Section 6,  we apply our framework to obtain various rigidity results. Finally, in the Appendix, we collect a few related formulas.

\section{\textbf{Notation and Preliminaries}}
In this section, we will fix the notation and convention that will be used throughout the paper.\\

$\RR, \WW, \Rc, \SS, \EE$ will stand for the Riemannian curvature operator, Weyl tensor, Ricci curvature, scalar curvature, and the traceless part of the 
Ricci respectively. 

For a finite dimensional real vector space (bundle), $\Lambda_{2}(V)$ denotes the space of bi-vectors or two-forms. In our case,  the space of interest is normally the tangent bundle and when the context is clear, the dependence on $V$ is omitted. 
    
Given an orthonormal basis  $\{E_{i}\}_{i=1}^{n}$ of $T_p M$, it is well-known that we can construct an  orthonormal frame about p such that $e_i(p)=E_i$ and $\nabla e_i \mid_p =0$. Such a frame is called normal at p. Also $e_{12}$ is the shorthand notation for $e_{1}\wedge e_{2}\in \Lambda_{2}$.  

The modified Laplacian is defined as 
\begin{equation*} \Delta_{f}=\Delta-\nabla_{\nabla f}.
\end{equation*}

For any $(m,0)$-tensor $T$, its divergence operator is defined as
\[(\delta T)_{p_{2}...p_{m}}=\sum_{i} \nabla_{i}T_{i p_{2}...p_{m}}, \]
while its interior product by a vector field $X$ is defined as
\[(i_{X} T)_{p_{2}...p_{m}}=T_{X p_{2}...p_{m}}. \]

Furthermore, we will interchange the perspective of a vector and a covector freely, i.e., a $(2,0)$ tensor will also be seen as a $(1,1)$ tensor. Similarly, a $(4,0)$ tensor such as $\RR, \WW$ can be interpreted as an operator on bi-vectors, that is, a map from $\Lambda_{2}(TM)\rightarrow \Lambda_{2}(TM)$. Consequently, the norm of  these operators is agreed to be  sum of all eigenvalues squared (this agrees with the tensor norm defined in \cite{chowluni} for $(2,0)$ tensors but differs by $1/4$-factor for $(4,0)$ tensors). More precisely, 
\[|\WW|^2=\sum_{i<j; k<l}\WW_{ijkl}^{2}.\]

In addition, the norm of covariant derivative and divergence on these tensors can be defined accordingly, 
\begin{align*}
|\nabla \WW|^{2}&=\sum_{i}\sum_{a<b; c<d}(\nabla_{i}\WW_{abcd})^2,\\
|\delta \WW|^2&=\sum_{i}\sum_{a<b}((\delta\WW)_{iab})^2.
\end{align*} 

For a tensor $T:\Lambda_2 (TM)\otimes (TM)\rightarrow \mathbb{R}$, we define
\begin{align}
\label{productconv}
 \left\langle{T,\delta \WW}\right\rangle=\sum_{i<j;k} T_{ijk}(\delta\WW)_{kij}, \\
 \left\langle{T,i_{X} \WW}\right\rangle=\sum_{i<j;k} T_{ijk}(i_{X}\WW)_{kij}.
 \end{align}
Finally, when the context is clear, we will omit the measure when integrating.

\subsection{Gradient Ricci Solitons} 

In this subsection, we recall some well-known identities for GRS's. A GRS is characterized by the Ricci soliton equation
\begin{equation}
\label{GRS}
\text{Rc}+\nabla \nabla f = \lambda g.
\end{equation}

Algebraic manipulation of (\ref{GRS}) and application of the Bianchi identities lead to following formulas (for a proof see \cite{chowluni}), 
\begin{align}
\SS + \triangle f &= n\lambda,\\
\label{rcdiv}
\frac{1}{2}\nabla_{i}\SS=\nabla^{j}\RR_{ij} &= \RR_{ij}\nabla^{j}f,\\
\label{rcandf}
\Rc(\nabla{f})&=\frac{1}{2}\nabla{\SS},\\
\SS+|\nabla f|^2-2\lambda f &= \text{constant},\\
\label{lapS}
\triangle\SS+2|\text{Rc}|^2 &= \left\langle{\nabla f,\nabla \SS}\right\rangle+2\lambda \SS.
\end{align}

\begin{remark}
If $\lambda \geq 0$, then $\SS\geq 0$ by the maximum principle and equation $(\ref{lapS})$.  Moreover, a complete GRS has positive scalar curvature unless it is isometric to the flat Euclidean space  \cite{prs09}.
\end{remark}

One main motivation of the study to GRS\rq{}s is that  they arise naturally as self-similar solutions to the Ricci flow. For a fixed GRS given by (\ref{GRS}) with $g(0)=g$ and $f(0)=f$, we define $\rho(t):=1-2\lambda{t}>0$,  and let $\phi(t):M^{n}\rightarrow M^{n}$ be a one-parameter family of diffeomorphisms generated by $X(t):=\frac{1}{\rho(t)}\nabla_{g(0)} f$. By pulling back, 
\begin{align*}
g(t)&=\rho(t) \phi(t)^{\ast} g(0),\\
\text{Rc}(t)&=\phi^{\ast}\Rc(0)=\frac{\lambda}{\rho(t)}g(t)-\He_{g(t)}{f(t)}.
\end{align*}
Then  $(M, g(t))$, $0\leq t<T$,  is a solution to the Ricci flow equation (\ref{RF}), where $T=\frac{1}{2\lambda}$ ($=\infty$) if $\lambda>0$ ($\lambda\leq 0$). Other important quantities along the flow are given below,
\begin{align*}
f(t)&=f(0)\circ \phi(t)=\phi(t)^{\ast}f,\\
\SS(t)&=\text{trace} (\Rc(t))=\frac{n\lambda}{\rho(t)}-\Delta_{g(t)}{f(t)},\\
f_{t}&=|\nabla{f}|_{g(t)}^2,\\
\tau(t)&=T-t=\frac{\rho(t)}{2\lambda},\\
u&=(4\pi\tau)^{-n/2}e^{-f},\\
\Psi(g,\tau,f)&=\int_{M}\Big(\tau(|\nabla{f}|^2+\SS)+f-n\Big)u d\mu\\
&= -\tau C(t)\int_{M} ud\mu.
\end{align*}

\subsection{Four-Manifolds}

In this subsection, we give a brief review of the algebraic structure of curvature and geometry on an oriented four-manifold $(M,g)$. \\

First we recall the Kurkani-Nomizhu product for $(2,0)$ symmetric tensors A and B, 
\[(A\circ B)_{ijkl}=A_{ik}B_{jl}+A_{jl}B_{ik}-A_{il}B_{jk}-A_{jk}B_{il}.\]
Then we have the following decomposition of curvature,
\begin{equation}\label{curvdecom}
\RR=\WW+\frac{S g\circ g }{2n(n-1)} + \frac{\EE \circ g}{n-2}=\WW-\frac{S g\circ g}{2(n-2)(n-1)} + \frac{\Rc \circ g}{n-2}.
\end{equation}   

In dimension four this becomes,
\begin{align*}
\RR &=\WW+\frac{\SS}{24} g\circ g+ \frac{1}{2}\EE \circ g=\WW+U+V, \\
|\RR|^2 &=|\WW|^2+|U|^2+|V|^2, \\
|U|^2 &=\frac{1}{2n(n-1)}\SS^2 =\frac{1}{24}\SS^2,\\
|V|^2 &=\frac{1}{n-2}|\EE|^2 =\frac{1}{2}|\EE|^2.
\end{align*}

An important feature in dimension four is that the Hodge star operator decomposes the space of bi-vectors ($\Lambda_{2}$) orthogonally according to the eigenvalues $\pm1$. The Riemannian curvature inherits this decomposition and, consequently, has a special structure. To be more precise, let $\{e_{i}\}_{i=1}^{4}$ be an orthonormal basis of the tangent space at any arbitrary point on M, then one pair of orthonormal bases of bi-vectors is given by 
\begin{align}
\label{Hodgebase}
\{\frac{1}{\sqrt{2}}(e_{12}+e_{34}),\frac{1}{\sqrt{2}}(e_{13}-e_{24}),\frac{1}{\sqrt{2}}(e_{14}+e_{23})\} &\mbox{ for } \Lambda_{2}^{+},\\
\{\frac{1}{\sqrt{2}}(e_{12}-e_{34}),\frac{1}{\sqrt{2}}(e_{13}+e_{24}),\frac{1}{\sqrt{2}}(e_{14}-e_{23})\} &\mbox{ for } \Lambda_{2}^{-}.\nonumber
\end{align}
Accordingly, the curvature now is 
\begin{equation}
\label{curdec}
\RR=
 \left( \begin{array}{cc}
A^{+} & C \\
C^{T} & A^{-} \end{array} \right),
\end{equation}
with C essentially the traceless part. It is easy to observe that $W(\Lambda_{2}^{\pm})\in \Lambda_{2}^{\pm}$, so it is unambiguous to define $W^{\pm}:=W^{|\Lambda_{\pm}}$. In particular,
\begin{equation}
\label{calpm}
\WW^{\pm}(\alpha, \beta)=\WW(\alpha^{\pm}, \beta^{\pm}),
\end{equation}
with $\alpha^{\pm}$ and $\beta^{\pm}$ the projection of $\alpha, \beta$ onto $\Lambda^{\pm}_{2}$.\\

Furthermore, as $\WW$ is traceless and satisfies the first Bianchi identity, there is a normal form by M. Berger \cite{berger61} (see also \cite{st69}). That is, there exists an orthonormal basis $\{e_{i}\}_{i=1}^4$ of $T_{p}M$, consequently $\{e_{12}, e_{13}, e_{14}, e_{34}, e_{42}, e_{23}\}$ being a basis of $\Lambda_{2}$, such that, 
\begin{equation}
\label{normalform}
\WW=
 \left( \begin{array}{cc}
A & B \\
B & A \end{array} \right)
\end{equation}
for $A=\text{diag}(a_{1}, a_{2}, a_{3})$ and $B=\text{diag}(b_{1}, b_{2}, b_{3})$, with $a_1+a_2+a_3=b_1+b_2+b_3=0$. Then, by (\ref{calpm}),
\[\WW^{\pm}=
 \left( \begin{array}{cc}
\frac{A\pm B}{2} & \frac{B\pm A}{2} \\
\frac{B\pm A}{2} & \frac{A\pm B}{2} \end{array} \right).\]

Using the basis given in (\ref{Hodgebase}),  we get
\[\WW=
 \left( \begin{array}{cc}
A+B & 0 \\
0 & A-B \end{array} \right).\]

Hence we obtain the following well-known identities.
\begin{lemma}
\label{sumW1}
Let $(M^4, g)$ be a four-dimensional Riemannian manifold, then the following tensor equations hold, 
\begin{align}
\label{sumW}
(\WW^{\pm})_{ikpq}(\WW^{\pm})_{j}{}^{kpq}&=|\WW^{\pm}|^2 g_{ij},\\
(\WW^{\pm})_{ikpq}(\WW^{\pm})^{kpq}{}_{j}&=\frac{1}{2}|\WW^{\pm}|^2 g_{ij}.
\end{align}

\end{lemma}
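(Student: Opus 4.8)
The plan is to exploit the explicit block-diagonal description of $\WW^\pm$ supplied just above the statement. Using the Hodge basis (\ref{Hodgebase}), $\WW^+$ acts on $\Lambda_2^+$ as a symmetric traceless $3\times 3$ matrix and kills $\Lambda_2^-$ entirely (and symmetrically for $\WW^-$). So I would first diagonalize: by the Berger normal form there is an orthonormal frame $\{e_i\}$ of $T_pM$ in which the self-dual part reads $\WW^+ = \mathrm{diag}(w_1,w_2,w_3)$ on the orthonormal bi-vector basis $\omega_a^+ = \tfrac{1}{\sqrt2}(e_{12}+e_{34})$, etc., with $w_1+w_2+w_3=0$, and $\WW^+\equiv 0$ on $\Lambda_2^-$. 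Then $|\WW^+|^2 = w_1^2+w_2^2+w_3^2$ (matching the norm convention $|\WW^+|^2=\sum_{i<j;k<l}(\WW^+)_{ijkl}^2$, once one accounts for the fact that each $\omega_a^+$ is built from two coordinate bi-vectors).

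The second step is the bookkeeping that turns the operator statement into the tensor statement. I would compute the coordinate components $(\WW^+)_{ijkl}$ in this frame directly from $\WW^+ = \sum_a w_a\, \omega_a^+\otimes\omega_a^+$; each $\omega_a^+\otimes\omega_a^+$ contributes to exactly the coordinate slots $(12,12),(12,34),(34,12),(34,34)$ for $a=1$, and cyclically for $a=2,3$, each with coefficient $\pm\tfrac12 w_a$. Then, to evaluate $(\WW^+)_{ikpq}(\WW^+)_j{}^{kpq}$, I would fix $i=j=1$ (all diagonal entries will be equal by the structure, which is the point of the identity), sum over the repeated indices $k,p,q$, and collect: the index $i=1$ appears in the bi-vectors $e_{12},e_{13},e_{14}$, so the contractions pick up contributions weighted by the eigenvalues of the three $\omega_a^+$ that involve a $1$. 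A short computation should give $\tfrac12(w_1^2+w_2^2+w_3^2)\cdot(\text{combinatorial factor})$ and reproduce $|\WW^+|^2\,g_{11}$ for the first identity and $\tfrac12|\WW^+|^2\,g_{11}$ for the second; off-diagonal entries ($i\neq j$) vanish because distinct coordinate bi-vectors are orthogonal. The difference of the factor $1$ versus $\tfrac12$ between the two identities comes precisely from whether one sums $(\WW^+)_{ikpq}(\WW^+)_{j}{}^{kpq}$ over all ordered pairs $k,p,q$ or, effectively, over $k$ and an unordered $\{p,q\}$ — i.e.\ the $\tfrac12$ double-counting factor built into the $(4,0)$-tensor norm convention flagged in Section 2.

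Alternatively — and this is the cleaner route I would actually present — I would argue representation-theoretically: $\WW^+$ is a symmetric endomorphism of the $3$-dimensional space $\Lambda_2^+ \cong \mathbb{R}^3$ (via the standard identification $\mathfrak{so}(3)\cong\mathbb{R}^3$ coming from the Hodge star), so $\WW^+\circ(\WW^+)^T = (\WW^+)^2$ is again a symmetric endomorphism whose trace is $|\WW^+|^2$. The contraction $(\WW^+)_{ikpq}(\WW^+)_j{}^{kpq}$ is an $SO(4)$-equivariant symmetric $2$-tensor built from $\WW^+$ alone; since $\WW^+$ involves only the self-dual summand, this $2$-tensor must be a multiple of $g_{ij}$ (there is no other invariant), and tracing both sides pins the constant. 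The constant is $\tfrac{1}{n}\cdot(\text{total trace})$: contracting $i$ with $j$ gives $|\WW^+|^2$ on the nose for the first contraction (and $\tfrac12|\WW^+|^2$ for the second, because of the norm convention), and dividing by $n=4$ is wrong here — rather, the contraction over $k,p,q$ already saturates three of the four slots, so tracing the remaining $g_{ij}$ over $i,j$ multiplies by $4$, forcing the coefficient to be $|\WW^+|^2/4\cdot 4$... I would be careful here and simply verify the constant by the explicit diagonal computation above rather than by a dimension count, since that is the only genuinely error-prone point. The main obstacle is thus not conceptual but purely the combinatorial normalization: getting the factor $1$ versus $\tfrac12$ right and consistent with the stated conventions $|\WW^\pm|^2=\sum_{i<j;k<l}(\WW^\pm)_{ijkl}^2$. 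Everything else follows from the block structure $\WW^+=\begin{pmatrix}\frac{A+B}{2}&\frac{B+A}{2}\\\frac{B+A}{2}&\frac{A+B}{2}\end{pmatrix}$ already displayed.
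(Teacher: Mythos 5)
Your overall strategy is the same as the paper's: put $\WW^{+}$ in the Berger normal form, work in the diagonalizing frame, and evaluate the contractions explicitly; your computation plan for the first identity $(\WW^{+})_{ikpq}(\WW^{+})_{j}{}^{kpq}=|\WW^{+}|^{2}g_{ij}$ is fine as stated. The genuine problem is your explanation of the factor $1$ versus $\tfrac12$. The two identities are \emph{not} the same contraction counted with ordered versus unordered index pairs: in the first the free indices $i,j$ both occupy the first slot and the remaining slots are paired identically, $\sum_{k,p,q}\WW^{+}_{ikpq}\WW^{+}_{jkpq}$, while in the second the index $j$ sits in the \emph{last} slot of the second factor, $\sum_{k,p,q}\WW^{+}_{ikpq}\WW^{+}_{kpqj}$, i.e.\ a cyclically shifted pairing. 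Restricting the first sum to $p<q$ would merely halve it by antisymmetry; that this number happens to equal the second contraction is a consequence of the lemma, not a proof of it, and it has nothing to do with the $1/4$ norm-convention remark in Section 2. In fact the second identity genuinely uses the tracelessness of $\WW^{+}$: in the eigenframe the cyclic pairing produces cross terms, e.g.\ the $i=j=1$ entry is $\tfrac14\big(w_1^2-w_1w_2-w_1w_3\big)+\cdots$, and only after substituting $w_1+w_2+w_3=0$ does one obtain $\tfrac12\sum_a w_a^2=\tfrac12|\WW^{+}|^2$. This is exactly the structure of the paper's computation, $\WW_{1kpq}\WW^{kpq}{}_{1}=\sum_i a_i^2-2(b_1b_2+b_2b_3+b_3b_1)=\sum_i(a_i^2+b_i^2)$, where the last step uses $\operatorname{tr}B=0$. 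So carry out the second contraction honestly rather than deducing it from the first by a bookkeeping factor.

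A secondary caution: your representation-theoretic shortcut, as phrased (``there is no other invariant''), is not yet an argument, since for a general curvature-type tensor $T$ the contraction $T_{ikpq}T_{j}{}^{kpq}$ is certainly not a multiple of $g_{ij}$ (take $T=\RR$). What makes it work for $\WW^{+}$ is that $\WW^{+}$ transforms only under the self-dual factor of $\mathrm{Spin}(4)\cong SU(2)\times SU(2)$, whereas trace-free symmetric $2$-tensors form the module $\Lambda_2^{+}\otimes\Lambda_2^{-}$, which is nontrivial under the anti-self-dual factor; hence the trace-free part of any quadratic expression in $\WW^{+}$ must vanish. If you include that justification, the proportionality to $g_{ij}$ follows, but, as you yourself note, the constants ($1$ and $\tfrac12$, and the use of tracelessness for the latter) still have to come from the explicit frame computation.
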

\begin{proof}
Note that these identities only depend on the decomposition of these tensors. In particular, it suffices to prove for the Weyl tensor. Using the normal form discussed above, we calculate that,
\begin{align*}
\WW_{1kpq}\WW^{kpq}{}_{1}&=\sum_{i=1}^3 a_{i}^2 -2(b_{1}b_{2}+b_{2}b_3 + b_{3}b_1)\\
&=\sum_{i=1}^3 (a_{i}^2+b_{i}^2). 
\end{align*}
Similar calculation can be done for all other pair of indexes to verify the statements. 
\end{proof}

\begin{remark}
The first identity can also be found in \cite[2.31]{Derd83}.
\end{remark}

In addition, in reference to the decomposition of curvature in (\ref{curdec}), we have the following relations.
\begin{align*}
A^{\pm} &=W^{\pm}+\frac{S}{12}I^{\pm},\\
|A^{\pm}|^{2} &=|W^{\pm}|^{2}+\frac{S^{2}}{48}, \\
|\EE|^2 &=|\text{Rc}|^2-\frac{S^2}{4}=4|C|^2=4\text{tr}(CC^T).
\end{align*}

If the manifold is closed, then the Gauss-Bonnet-Chern formula for the Euler characteristic and Hirzebauch formulas for the signature (cf. \cite{besse} for more details) are given by,
\begin{align}
\label{Euler}
8\pi^2 \chi(M) &= \int_{M} (|\WW|^2-|V|^2+|U|^2)=\int_{M}(|\WW|^2-\frac{1}{2}|\EE|^2+\frac{S^2}{24})\nonumber\\
& =\int_{M}(|\RR|^2-|\EE|^2),\\
\label{signature}
12 \pi^2 \tau(M) &=\int_{M}(|\WW^{+}|^2-|\WW^{-}|^2). 
\end{align} 
\begin{remark} It follows immediately that if M admits an Einsterin metric $\EE=0$, then we have the Hitchin-Thorpe inequality
\begin{equation*}
|\tau(M)|\leq \frac{2}{3}\chi(M).
\end{equation*}
\end{remark}

The Hodge operator in dimension four is related to a certain decomposition  on the tangent bundle. Let $\{\alpha_{i}\}_{i=1}^{3}$ be a positive-oriented orthogonal basis of $\Lambda_{2}^{+}$ with $|\alpha_{i}|=\sqrt{2}$, according to  \cite{ahs78}, if $\text{sign}(i,j,k)=1$, then we have  
\begin{align}
\label{baseplus}
\alpha_{i}^2 &= -\text{Identity},\nonumber \\
\alpha_{i}\alpha_{j} &=\alpha_{k}=-\alpha_{j}\alpha_{i}, \nonumber \\
\left\langle{\alpha_{i}(X),\alpha_{j}(X)}\right\rangle&=\left\langle{X,-\alpha_{i}\alpha_{j}X}\right\rangle =\left\langle{X, \alpha_{k}X}\right\rangle =0. \nonumber
\end{align}
Here $\text{sign}(i,j,k)$ is the sign-um of the permutation of $\{1,2,3\}$. The positive-orientation is just to agree with the sign convention. An example of such a basis is given by multiplying $\sqrt{2}$ the basis given in (\ref{Hodgebase}). Consequently, we have the following result.

\begin{lemma}\label{decom}
Suppose $(M,g)$ is a four-dimensional Riemannian manifold and $X$ is a vector field on M. At any point p such that $X_{p} \neq 0$, $$T_{p}M=X_{p}\oplus \Lambda_{2}^{+}(X_{p}),$$ 
with $\Lambda_{2}^{+}(X)=\{\alpha(X_{p}), \alpha \in \Lambda_{2}^{+}\}$. 
\end{lemma}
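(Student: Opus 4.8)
The plan is to produce an explicit orthogonal basis of $T_pM$ adapted to $X_p$. Fix a positively oriented orthogonal basis $\{\alpha_1,\alpha_2,\alpha_3\}$ of $\Lambda_2^{+}$ with $|\alpha_i|=\sqrt2$, so that the relations recorded just before the statement apply: $\alpha_i^2=-\mathrm{Id}$, $\alpha_i\alpha_j=\alpha_k=-\alpha_j\alpha_i$ whenever $\text{sign}(i,j,k)=1$, and, crucially, each $\alpha_i$ acts on $T_pM$ as a skew-symmetric endomorphism (it is a $2$-vector). I would first note that the evaluation map $\Lambda_2^{+}\to T_pM$, $\alpha\mapsto\alpha(X_p)$, is linear, so $\Lambda_2^{+}(X_p)$ is genuinely a linear subspace, spanned by $\alpha_1(X_p),\alpha_2(X_p),\alpha_3(X_p)$.

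Next I would verify that the four vectors $X_p,\alpha_1(X_p),\alpha_2(X_p),\alpha_3(X_p)$ are mutually orthogonal and nonzero. Skew-symmetry of $\alpha_i$ gives $\langle X_p,\alpha_i(X_p)\rangle=0$, and also $|\alpha_i(X_p)|^2=-\langle X_p,\alpha_i^2(X_p)\rangle=|X_p|^2\neq0$. For $i\neq j$, writing $\alpha_i\alpha_j=\pm\alpha_k$ and using skew-symmetry once more, $\langle \alpha_i(X_p),\alpha_j(X_p)\rangle=-\langle X_p,\alpha_i\alpha_j(X_p)\rangle=\mp\langle X_p,\alpha_k(X_p)\rangle=0$. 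Hence these four vectors form an orthogonal basis of the $4$-dimensional space $T_pM$.

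Finally, since $X_p$ and $\{\alpha_1(X_p),\alpha_2(X_p),\alpha_3(X_p)\}$ together span $T_pM$ and the two subspaces they generate are orthogonal to each other, we obtain the orthogonal direct sum $T_pM=X_p\oplus\Lambda_2^{+}(X_p)$, with $\Lambda_2^{+}(X_p)$ exactly the $3$-dimensional orthogonal complement of $X_p$.

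There is no real obstacle here; the only points requiring a little care are recognizing that each $\alpha_i$ acts skew-symmetrically — which is precisely what makes $\langle X_p,\alpha_i(X_p)\rangle=0$ and $|\alpha_i(X_p)|=|X_p|$ automatic — and tracking the signs $\pm\alpha_k$ in the product relations, which in the end are irrelevant since $\langle X_p,\alpha_k(X_p)\rangle$ vanishes in either case.
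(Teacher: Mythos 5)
Your proof is correct and follows essentially the same route as the paper: fix the orthogonal basis $\{\alpha_i\}$ of $\Lambda_2^{+}$ satisfying the quaternionic relations recorded before the statement, and use skew-symmetry of each $\alpha_i$ together with $\alpha_i\alpha_j=\pm\alpha_k$ to see that $X_p,\alpha_1(X_p),\alpha_2(X_p),\alpha_3(X_p)$ are nonzero and mutually orthogonal, hence an orthogonal basis of $T_pM$. You merely spell out the details (linearity of evaluation, the norm computation, the sign bookkeeping) that the paper's terse proof leaves implicit.
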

\begin{proof}
Pick an orthogonal basis of $\Lambda_{2}^{+}$ as above then it follows that $\{\alpha_{i}(X_{p})\}_{i=1}^3$ are three orthogonal vectors and each is perpendicular to $X_{p}$. So the statement follows. 
\end{proof}
\begin{remark} \label{derem}
By symmetry, the statement also holds for $\Lambda^{-}_{2}$. When the context is clear, we normally omit the sub-index of the point. 
\end{remark}

\subsection{New Sectional Curvature}
In this subsection, we first prove some results in dimension four to illustrate that classical techniques for Einstein 4-manifolds can be adapted to study  GRS's.\\

For a four-dimensional GRS $(M,g,f,\lambda)$, define
\begin{equation}
\label{equaH}
H=\He{f}\circ g, 
\end{equation}
then the following decomposition follows from a straightforward calculation.
\begin{lemma}
\label{calculateH}
With respect to the decomposition given by (\ref{Hodgebase}), we have
\[H=
 \left( \begin{array}{cc}
A & B \\
B^{T} & A\end{array} \right),\]
with 
\begin{align*}
A&=\frac{\Delta{f}}{2} \text{Id}~ ,
\\
B&=\left(\begin{array}{ ccc}
  \frac{f_{11}+f_{22}-f_{33}-f_{44}}{2} & f_{23}-f_{14} & f_{24}+f_{13}\\ 
  f_{23}+f_{14} & \frac{f_{11}+f_{33}-f_{22}-f_{44}}{2} & f_{34}-f_{12} \\ 
  f_{24}-f_{13} & f_{34}+f_{12} & \frac{f_{11}+f_{44}-f_{22}-f_{33}}{2} \end{array}  \right).
 \end{align*}
 \end{lemma}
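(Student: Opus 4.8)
The plan is to verify Lemma \ref{calculateH} by a direct computation of the action of the Kulkarni-Nomizu product $H = \He f \circ g$ on the standard bases of $\Lambda_2^+$ and $\Lambda_2^-$ given in (\ref{Hodgebase}). Write $h = \He f$ with components $h_{ij} = f_{ij}$ in the orthonormal frame $\{e_i\}_{i=1}^4$, and recall that for symmetric $(2,0)$-tensors $A,B$ the operator $A\circ B$ acts on bivectors via $(A\circ B)(e_i\wedge e_j) = \sum_{k<l}(A\circ B)_{ijkl}\, e_k\wedge e_l$, where $(A\circ B)_{ijkl}=A_{ik}B_{jl}+A_{jl}B_{ik}-A_{il}B_{jk}-A_{jk}B_{il}$. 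Since $B=g$ here is the identity, the formula simplifies: $(h\circ g)_{ijkl} = h_{ik}\delta_{jl} + h_{jl}\delta_{ik} - h_{il}\delta_{jk} - h_{jk}\delta_{il}$.

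The key steps, in order: first I would record the block structure — since $\Lambda_2 = \Lambda_2^+ \oplus \Lambda_2^-$ is orthogonal and $H$ is symmetric as an operator on $\Lambda_2$, it has the claimed $2\times2$ block form $\left(\begin{smallmatrix} A & B \\ B^T & A\end{smallmatrix}\right)$ provided one checks the two diagonal blocks coincide; this follows because $H$ commutes with the Hodge star up to the appropriate sign pattern, or more concretely, just by computing all entries. Second, I would compute the diagonal block $A$: applying $h\circ g$ to $\tfrac{1}{\sqrt2}(e_{12}+e_{34})$ and pairing with the same vector (and similarly for the other two basis elements of $\Lambda_2^+$) gives, using the trace relation, exactly $\tfrac{\Delta f}{2}$ on the diagonal and zero off-diagonal within the block — here one uses $\Delta f = \sum_i f_{ii}$ and the antisymmetry to kill cross terms. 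The point is that $(h\circ g)(e_{12}+e_{34})$ paired against $e_{12}+e_{34}$ picks up $h_{11}+h_{22}+h_{33}+h_{44} = \Delta f$, divided by $2$ after accounting for the $\tfrac{1}{\sqrt2}$ normalizations and the pairing convention. Third, I would compute the off-diagonal block $B$ by pairing $(h\circ g)$ applied to a $\Lambda_2^+$-basis element against a $\Lambda_2^-$-basis element, e.g. pairing with $\tfrac12(e_{12}+e_{34})$ against $\tfrac12(e_{12}-e_{34})$ yields $\tfrac12(f_{11}+f_{22}-f_{33}-f_{44})$, and pairing $\Lambda_2^+$-element $\tfrac{1}{\sqrt2}(e_{12}+e_{34})$ against $\Lambda_2^-$-element $\tfrac{1}{\sqrt2}(e_{13}+e_{24})$ produces the mixed second-derivative terms such as $f_{23}\pm f_{14}$, etc. Running through all nine pairings fills in the matrix $B$ displayed in the lemma.

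The main obstacle — really the only subtlety — is bookkeeping: getting all the signs and the factors of $\tfrac12$ versus $\tfrac{1}{\sqrt2}$ consistent between the normalization of the bivector bases in (\ref{Hodgebase}), the convention (stated in Section 2) that the norm of a $(4,0)$-tensor viewed as an operator on $\Lambda_2$ is the sum of squared eigenvalues (which differs by a $1/4$ factor from the naive tensor norm), and the index conventions for the Kulkarni-Nomizu product. One must be careful that the bases in (\ref{Hodgebase}) are unit vectors in the bivector inner product where $|e_i\wedge e_j|=1$, so that $\langle e_{12}+e_{34}, e_{12}+e_{34}\rangle = 2$, etc. Once the conventions are pinned down, the computation is entirely mechanical: evaluate $(h\circ g)_{ijkl}$ on the finitely many relevant index quadruples and assemble the $6\times 6$ matrix, then read off the two blocks. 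I would present one representative diagonal entry and one representative off-diagonal entry in detail and assert the rest follow identically, since the lemma is purely algebraic and pointwise.
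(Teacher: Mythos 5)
Your proposal is correct and is essentially the paper's own argument: the lemma is stated there as following "from a straightforward calculation," namely evaluating $(\He f\circ g)_{ijkl}$ on the $\Lambda_2^{\pm}$ bases of (\ref{Hodgebase}) exactly as you do, and your sample entries (the diagonal $\tfrac{\Delta f}{2}$ and the mixed entries such as $f_{23}\pm f_{14}$) check out. Just keep the normalization uniformly $\tfrac{1}{\sqrt2}$ (not $\tfrac12$, as written once in your third step), since otherwise the factor in $\tfrac12(f_{11}+f_{22}-f_{33}-f_{44})$ would come out wrong.
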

\begin{remark} \label{HW0}
In particular $\left\langle{H,\WW}\right\rangle=0$.
\end{remark}

We further define a new ``curvature" tensor $\overline{\RR}$ by 
\begin{align}
\label{newcur}
\overline{\RR}&=\RR+\frac{1}{2}H\\
&=\WW+\frac{S}{24} g\circ g+ \frac{1}{2}(\Rc-\frac{S}{4}g) \circ g+\frac{1}{2}H \nonumber \\
&=\WW-\frac{S}{12} g\circ g +\frac{1}{2}\lambda g\circ g=\WW+(\frac{\lambda}{2}-\frac{S}{12})g\circ g.\nonumber
\end{align}
Thus, it follows immediately that, with respect to (\ref{Hodgebase}),
\[ \overline{\RR}=
 \left( \begin{array}{cc}
\overline{A}^{+} & 0 \\
0 & \overline{A}^{-} \end{array} \right),\]
with $\overline{A}^{\pm}=\WW^{\pm}+(\lambda-\frac{S}{6})\text{Id}=\WW^{\pm}+(\frac{\Delta{f}}{4}+\frac{S}{12})\text{Id}$. Furthermore, following the argument in \cite{berger61}, we obtain,

\begin{proposition}
\label{normalformG} There exists a normal form for $\overline{\RR}$. More precisely, at each point, there exits an orthonormal base $\{e_{i}\}_{i=1}^{4}$, such that with respect to the corresponding base $\{e_{12}, e_{13}, e_{14}, e_{34}, e_{42}, e_{23}\}$ for $\Lambda^2$ and as an operator on 2-forms, 
\[\overline{\RR}=
 \left( \begin{array}{cc}
A & B \\
B & A \end{array} \right),\]
with $A=\text{diag}(a_{1}, a_{2}, a_{3})$ and  $B=\text{diag}(b_{1},b_{2}, b_{3})$. Moreover, $a_{1}=\min \overline{K}$, $a_{3}=\max \overline{K}$ and $|b_{i}-b_{j}|\leq |a_{i}-a_{j}|$, where $\overline{K}$ is the \lq\lq{}sectional curvature\rq\rq{} of $\overline{\RR}$, i.e., $\overline{K}(e_{1},e_{2})=\overline{\RR}_{1212}$ for any orthonormal vectors $e_{1}$ and $e_{2}$. 
\end{proposition}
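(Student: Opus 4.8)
\textbf{Proof plan for Proposition \ref{normalformG}.}

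The plan is to reduce the statement to Berger's classical normal form result for algebraic curvature operators that are traceless on each $\Lambda_2^{\pm}$ factor and satisfy the first Bianchi identity. First I would observe, from the displayed formula $\overline{\RR}=\WW+(\frac{\lambda}{2}-\frac{S}{12})g\circ g$ in \eqref{newcur} together with the block decomposition $\overline{\RR}=\mathrm{diag}(\overline{A}^{+},\overline{A}^{-})$ with $\overline{A}^{\pm}=\WW^{\pm}+(\lambda-\frac{S}{6})\mathrm{Id}$, that $\overline{\RR}$ differs from the full Weyl operator only by a multiple of the identity on $\Lambda_2$ (equivalently, a multiple of $g\circ g$). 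Since $\WW$ satisfies the first Bianchi identity and $g\circ g$ does too, $\overline{\RR}$ satisfies the first Bianchi identity; and since $\WW$ is block-diagonal with respect to the $\pm$ splitting while $g\circ g$ is $2\,\mathrm{Id}$ on each block, $\overline{\RR}$ is also block-diagonal. This is exactly the algebraic setting in which Berger's argument (as cited via \cite{berger61}, see also \cite{st69}) produces, at each point, an orthonormal frame $\{e_i\}_{i=1}^4$ for which, in the associated basis $\{e_{12},e_{13},e_{14},e_{34},e_{42},e_{23}\}$ of $\Lambda_2$, the operator takes the form $\left(\begin{array}{cc}A & B\\ B & A\end{array}\right)$ with $A=\mathrm{diag}(a_1,a_2,a_3)$, $B=\mathrm{diag}(b_1,b_2,b_3)$.

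Next I would identify the diagonal entries with sectional-type curvatures. By definition $\overline{K}(e_1,e_2)=\overline{\RR}_{1212}=\overline{\RR}(e_{12},e_{12})=a_1$ in the normal-form basis, and similarly $\overline{\RR}(e_{13},e_{13})=a_2$, $\overline{\RR}(e_{14},e_{14})=a_3$; the key point of Berger's diagonalization is that one may arrange the frame so that $a_1$ is the minimum and $a_3$ the maximum of $\overline{K}$ over all 2-planes. I would verify this by a standard variational argument: $\overline{K}$ viewed as a function on the Grassmannian of 2-planes (equivalently on unit decomposable bivectors) attains its min and max; at a critical plane the first-variation condition forces the off-diagonal coupling to vanish in a suitably adapted frame, which is precisely how the simultaneous form with $a_1=\min\overline{K}$, $a_3=\max\overline{K}$ is obtained. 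The middle value $a_2$ then sits between them automatically.

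Finally, the inequality $|b_i-b_j|\le|a_i-a_j|$ is the content of Berger's lemma and follows from the first Bianchi identity: for an algebraic curvature operator in this block form, the Bianchi identity relates the $b_i$ to the $a_i$ precisely through constraints of the shape that force $|b_i-b_j| \le |a_i - a_j|$ once the $a_i$ are ordered as extremal sectional curvatures; I would reproduce this by writing out the Bianchi identity $\overline{\RR}_{1234}+\overline{\RR}_{1342}+\overline{\RR}_{1423}=0$ (which in the normal-form basis reads $b_1+b_2+b_3 = \text{trace relation}$ — here, since $g\circ g$ contributes equally, the same relation as for $\WW$, namely $b_1+b_2+b_3$ equal to the off-diagonal trace, vanishing for the Weyl part) and combining it with the extremality of $a_1,a_3$. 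The main obstacle, and the only genuinely delicate point, is this last step: carefully transcribing Berger's estimate to $\overline{\RR}$ and checking that the additive $g\circ g$ term — which shifts all $a_i$ by the same constant and leaves the $b_i$ untouched — does not disturb either the extremality normalization of the $a_i$ or the inequality. Since a constant shift affects neither $a_i-a_j$ nor $b_i-b_j$, the reduction to the pure Weyl case is clean, and invoking \cite{berger61} legitimately closes the argument.
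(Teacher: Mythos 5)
Your proposal is correct and follows essentially the same route as the paper, which likewise reduces the statement to Berger's argument in \cite{berger61} after observing that $\overline{\RR}$ is block-diagonal with respect to $\Lambda_2^{\pm}$ and is an algebraic curvature operator (the constant multiple of $g\circ g$ shifting neither $a_i-a_j$ nor $b_i-b_j$ nor the extremal planes). One small correction to your last step: the inequality $|b_i-b_j|\leq|a_i-a_j|$ is not forced by the first Bianchi identity (which only gives $b_1+b_2+b_3=0$ for $\overline{\RR}$); it follows purely from the extremality of $a_1$ and $a_3$ by evaluating $\overline{K}$ on $45^{\circ}$-rotated planes, e.g. the plane spanned by $\frac{1}{\sqrt{2}}(e_1+e_3)$ and $\frac{1}{\sqrt{2}}(e_2+e_4)$, whose curvature is $\frac{a_1+a_3}{2}+\frac{b_1-b_3}{2}$, so that $a_1\leq\overline{K}\leq a_3$ yields the bound — a point your planned transcription of Berger's estimate would surface anyway.
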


\begin{remark} Can a GRS be characterized by the existence of such a function f with  $\overline{\RR}$ constructed as above having the normal form?
\end{remark} 

Next, we investigate the assumption of having a lower bound on this new sectional curvature similar to \cite{gl99}. For $\epsilon<1/3$, suppose that 
\begin{equation}
\label{seclb}
\overline{K}\geq \epsilon \lambda.
\end{equation}
 Equivalently, for any orthonormal pairs $e_{i}$ and $e_{j}$, that is
\begin{equation}
\label{seclb1}
\overline{\RR}_{ijij}\geq \epsilon\lambda \Leftrightarrow \RR_{ijij}+\frac{f_{ii}+f_{jj}}{2}\geq \epsilon \lambda.
\end{equation}

Then we have the following lemma.
\begin{lemma} 
\label{lemma}
Let $(M,g,f, \lambda)$ be a GRS, then assumption (\ref{seclb}) implies the following:
\begin{align*}
\SS+3\Delta{f} &\geq 12 \epsilon \lambda,\\
\SS & \leq 6(1-\epsilon)\lambda,\\
\Delta{f} &\geq 2(3\epsilon-1)\lambda,\\
\frac{1}{\sqrt{6}}(|\WW^{+}|+|\WW^{-}|)&\leq 2(1-\epsilon)\lambda-\frac{S}{3}.
\end{align*}
The equality happens in the last formula if and only if $\WW^{\pm}$ has the form $a^{\pm}\text{diag}(-1,-1, 2)$, with $a^{\pm}\geq 0$ and 
\[a^{+}+a^{-}=2(1-\epsilon)\lambda-\frac{\SS}{3}.\]
\end{lemma}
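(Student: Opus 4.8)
The plan is to translate the hypothesis $\overline{K}\geq\epsilon\lambda$ into linear inequalities on the eigenvalues of $\overline{\RR}$ in its normal form, using Proposition \ref{normalformG}, and then read off each conclusion by tracing back through the definitions. First I would fix a point and invoke Proposition \ref{normalformG} to write $\overline{\RR}$ in the normal form with $A=\mathrm{diag}(a_1,a_2,a_3)$, $B=\mathrm{diag}(b_1,b_2,b_3)$, $a_1+a_2+a_3=b_1+b_2+b_3=0$ shifted by the scalar part. Concretely, recalling $\overline{A}^\pm=\WW^\pm+\big(\lambda-\tfrac{S}{6}\big)\mathrm{Id}$, the eigenvalues of $\overline{A}^+$ are $w^+_i+(\lambda-\tfrac S6)$ and of $\overline{A}^-$ are $w^-_i+(\lambda-\tfrac S6)$, where $w^\pm_i$ are the eigenvalues of $\WW^\pm$ (each summing to zero). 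The sectional-curvature bound $\overline{K}\geq\epsilon\lambda$, by the standard fact (as in Berger's argument, used for the normal form) that the minimum of $\overline{\RR}_{ijij}$ over orthonormal pairs equals $a_1=\min\overline K$, is exactly $a_1\geq\epsilon\lambda$.

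Next I would extract the four consequences one at a time. Summing the scalar parts: $\mathrm{tr}\,\overline{A}^+ + \mathrm{tr}\,\overline{A}^- = 6(\lambda-\tfrac S6) = 6\lambda - S$; on the other hand by (\ref{newcur}) the trace of $\overline\RR$ as an operator is $\tfrac12\overline S$ where $\overline S = S + \tfrac12\,\mathrm{tr}(H\text{-part})$, and since $H=\He f\circ g$ has trace (as an operator on $2$-forms) equal to $3\Delta f$, one gets $\mathrm{tr}\,\overline\RR = \tfrac{S}{2} + \tfrac{3\Delta f}{2}$ — wait, I would instead argue directly: each diagonal entry $\overline\RR_{ijij}\geq\epsilon\lambda$, and summing over the three ``diagonal'' $2$-planes in the normal frame gives $\sum_i a_i \geq 3\epsilon\lambda$, i.e. $3(\lambda-\tfrac S6)\geq 3\epsilon\lambda$ after accounting for the Weyl part summing to zero; cleaning up yields $S + 3\Delta f \geq 12\epsilon\lambda$ once we substitute $\lambda - \tfrac S6 = \tfrac{\Delta f}{4}+\tfrac{S}{12}$ from the displayed identity $\overline A^\pm = \WW^\pm + (\tfrac{\Delta f}{4}+\tfrac S{12})\mathrm{Id}$. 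Then $S\leq 6(1-\epsilon)\lambda$ follows from $a_1\geq\epsilon\lambda$ applied to the most negative eigenvalue of $\WW^\pm$: since $\min_i w^\pm_i \leq 0$, we get $(\lambda-\tfrac S6)\geq a_1 - \min w^\pm_i \geq \epsilon\lambda$, hence $S\leq 6(1-\epsilon)\lambda$. Using $S+\Delta f = 4\lambda$ (the trace of (\ref{GRS})) gives $\Delta f = 4\lambda - S \geq 4\lambda - 6(1-\epsilon)\lambda = 2(3\epsilon-1)\lambda$.

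For the last inequality I would estimate $\max_i w^\pm_i$. Since $\WW^\pm$ is trace-free and symmetric on a $3$-dimensional space, if its eigenvalues are $\mu_1\leq\mu_2\leq\mu_3$ with $\sum\mu_i=0$, then $|\WW^\pm|^2 = \sum\mu_i^2$, and one checks $\mu_3 \geq \tfrac{1}{\sqrt6}|\WW^\pm|$ with equality exactly when $(\mu_1,\mu_2,\mu_3) = a^\pm(-1,-1,2)$ for $a^\pm\geq 0$ (this is the extremal configuration for a trace-free symmetric operator in $3$ dimensions, proved by Lagrange multipliers or by noting $\mu_3 = -(\mu_1+\mu_2)$ and minimizing $\mu_3^2/(\mu_1^2+\mu_2^2+\mu_3^2)$). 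The bound $a_1\geq\epsilon\lambda$ applied to the plane realizing the largest eigenvalue of $\overline A^\pm$ is not the constraint; rather, applying it to the smallest eigenvalue and using that $w^+$ and $w^-$ live in orthogonal blocks, I would get $(\lambda - \tfrac S6) + \min(w^+) \geq \epsilon\lambda$ and similarly for $w^-$, hence $-\min(w^\pm) \leq (1-\epsilon)\lambda - \tfrac S6$; combined with $\max(w^\pm)\leq -2\min(w^\pm)$ — no: $\max = -(\text{sum of other two})\le -2\min$ only in the extremal case. The clean route: $a_1 = (\lambda-\tfrac S6) + \min_i(\text{eigenvalue of }\WW^+\oplus\WW^-)\ge\epsilon\lambda$ is one inequality; I want an upper bound on $|\WW^+|+|\WW^-|$, so I use $|\WW^\pm| \leq \sqrt6\,\max_i w^\pm_i \le \sqrt6\,\big(-2\min_i w^\pm_i\big)$ is again extremal-only, so instead: $|\WW^\pm|\le\sqrt6\cdot\big(-\min_i w^\pm_i\big)$ fails too. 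The correct elementary bound is $\max_i\mu_i \ge \tfrac1{\sqrt6}\sqrt{\sum\mu_i^2}$ AND $-\min_i\mu_i\ge\tfrac1{\sqrt6}\sqrt{\sum\mu_i^2}$, with the two equalities achieved by $\mathrm{diag}(-1,-1,2)$ and $\mathrm{diag}(-2,1,1)$ respectively — so from $-\min_i w^\pm_i \le (1-\epsilon)\lambda-\tfrac S6$ I get $\tfrac1{\sqrt6}|\WW^\pm| \le (1-\epsilon)\lambda - \tfrac S6$; adding the $+$ and $-$ versions gives $\tfrac1{\sqrt6}(|\WW^+|+|\WW^-|) \le 2(1-\epsilon)\lambda - \tfrac S3$, with equality iff each $\WW^\pm$ has eigenvalues $a^\pm(-2,1,1)$ — equivalently $a^\pm\,\mathrm{diag}(-1,-1,2)$ after relabeling — and the two constraints $-\min w^\pm_i = (1-\epsilon)\lambda-\tfrac S6$ are saturated, which sum to the stated equality $a^++a^- = 2(1-\epsilon)\lambda - \tfrac S3$.

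The main obstacle is bookkeeping the sign conventions and the passage between ``$\overline K\geq\epsilon\lambda$'' as a statement about all orthonormal $2$-planes and the eigenvalue inequalities for the block operators $\overline A^\pm$: one must be careful that $\min\overline K = a_1$ really is the smallest eigenvalue of $\overline\RM$ on $\Lambda_2$ (this is the content of Berger's normal-form argument cited in Proposition \ref{normalformG}), and that the Weyl blocks $\WW^+$, $\WW^-$ each contribute an independent trace-free summand so that the per-block estimates can be added. The extremal analysis for the sharp constant $1/\sqrt6$ is routine linear algebra but needs to be stated carefully to get the equality characterization right.
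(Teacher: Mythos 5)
Your treatment of the traced inequalities is essentially salvageable: the first one you get correctly by summing $\overline{\RR}_{1j1j}\ge\epsilon\lambda$ over the three coordinate $2$-planes, and the second and third then follow from it together with $\SS+\Delta f=4\lambda$ (this is all the paper does), so you did not need eigenvalue considerations there at all. The genuine gap is in the last inequality. The hypothesis (\ref{seclb}) constrains $\overline{\RR}$ only on decomposable $2$-forms $X\wedge Y$; a purely self-dual form such as $\tfrac{1}{\sqrt2}(e_{12}+e_{34})$ is \emph{not} decomposable, so the assumption gives no control on the blocks $\overline{A}^{+}$, $\overline{A}^{-}$ separately. Your per-block bounds $-\min_i w^{\pm}_i\le(1-\epsilon)\lambda-\tfrac{\SS}{6}$ (equivalently $\mu_{\min}(\overline{A}^{\pm})\ge\epsilon\lambda$) are therefore not consequences of the hypothesis: since every simple unit $2$-form has $|\phi^{+}|=|\phi^{-}|=\tfrac1{\sqrt2}$, the minimum of $\overline{K}$ equals $\tfrac12\bigl(\mu_{\min}(\overline{A}^{+})+\mu_{\min}(\overline{A}^{-})\bigr)$, and if, say, $\WW^{-}=0$ and $\lambda-\tfrac{\SS}{6}$ is large, $\overline{K}\ge\epsilon\lambda$ can hold while $\mu_{\min}(\overline{A}^{+})<\epsilon\lambda$. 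The paper's key step, which is what you are missing, is exactly this equivalence: $\phi$ simple iff $\phi\wedge\phi=0$ iff $|\phi^{+}|=|\phi^{-}|$, so (\ref{seclb}) is equivalent to the single \emph{coupled} inequality $a^{+}+a^{-}+2\lambda-\tfrac{\SS}{3}\ge2\epsilon\lambda$ with $a^{\pm}$ the smallest eigenvalues of $\WW^{\pm}$ (this is (\ref{lemma1})); one then adds the purely algebraic, always-true inequalities $-a^{\pm}\ge\tfrac1{\sqrt6}|\WW^{\pm}|$. Your final inequality is the sum of your two unjustified per-block bounds, so the statement you reach is true, but the route is invalid.

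The equality case inherits this defect and has an extra slip. The correct characterization is equality in (\ref{lemma1}) together with equality in both algebraic inequalities; this forces each $\WW^{\pm}$ to have the pattern $a^{\pm}\,\mathrm{diag}(-1,-1,2)$ and forces only the \emph{sum} $a^{+}+a^{-}$ to equal $2(1-\epsilon)\lambda-\tfrac{\SS}{3}$, not, as you require, each block separately saturating $-\min w^{\pm}=(1-\epsilon)\lambda-\tfrac{\SS}{6}$; your condition is strictly stronger and not equivalent. You also interchanged the two extremal configurations: for a trace-free symmetric operator on a $3$-dimensional space, $-\mu_{\min}\ge\tfrac1{\sqrt6}\|\mu\|$ is saturated exactly by the pattern $(-1,-1,2)$ (negative eigenvalue of multiplicity two), while $\mu_{\max}\ge\tfrac1{\sqrt6}\|\mu\|$ is saturated by $(-2,1,1)$; these are not related by relabeling, and the parenthetical ``equivalently $\mathrm{diag}(-1,-1,2)$ after relabeling'' is what accidentally lands you on the correct normal form.
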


\begin{proof}
All  inequalities follow from tracing equation (\ref{seclb1}) and the soliton equation $S+\Delta{f}=4\lambda$ except the last one. 

For the last inequality, first note that any two form $\phi$ can be written as a simple wedge product of 1-forms iff $\phi\wedge \phi=0$. In dimension four, with respect to (\ref{Hodgebase}), that is equivalent to $\phi=\phi^{+}+\phi^{-}$ and $|\phi^{+}|=|\phi^{-}$. Therefore, in light of Proposition \ref{normalform}, assumption (\ref{seclb}) is equivalent to 
\begin{equation}
\label{lemma1}
a^{+}+a^{-}+2\lambda-\frac{S}{3}\geq 2\epsilon \lambda 
\end{equation}
with $a^{+}, a^{-}$ are the smallest eigenvalues of $\WW^{\pm}$. Using the algebraic inequalities 
\begin{align}
-a^{+} &\geq \frac{1}{\sqrt{6}}|\WW^{+}|,\label{lemma2}\\
-a^{-} &\geq \frac{1}{\sqrt{6}}|\WW^{-}|, \label{lemma21}\end{align}
 we obtain:
\[2(1-\epsilon)\lambda-\frac{S}{3}\geq \frac{1}{\sqrt{6}}(|\WW^{+}|+|\WW^{-}|).\]
Equality happens if and only if the equality happens in (\ref{lemma1}) and (\ref{lemma2}) (or \eqref{lemma21}). The result then follows immediately.

\end{proof}

\begin{lemma}
\label{sumWnorm}
 Let $(M,g,f, \lambda)$ be a closed GRS with assumption (\ref{seclb}),  then
\[
\int_{M}(|\WW^{+}|+|\WW^{-}|)^2 \leq \int_{M}\frac{2S^2}{3}d\mu-8(1-\epsilon)(1+3\epsilon)\lambda^2 V(M).
\]
Again equality holds if $\WW^{\pm}$ has the form $a^{\pm}\text{diag}(-1,-1, 2)$ with $a^{\pm}\geq 0$ and 
\[a^{+}+a^{-}=2(1-\epsilon)\lambda-\frac{\SS}{3}.\]
\end{lemma}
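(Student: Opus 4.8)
The plan is to integrate the pointwise bound from Lemma \ref{lemma} over $M$ and then convert the squared-distance-to-equality terms into a scalar-curvature integral using the closedness of the manifold. Starting from the last inequality of Lemma \ref{lemma}, namely $\tfrac{1}{\sqrt{6}}(|\WW^{+}|+|\WW^{-}|)\le 2(1-\epsilon)\lambda-\tfrac{S}{3}$, I would square both sides (the right-hand side is nonnegative, as guaranteed by the equality analysis in Lemma \ref{lemma}) to get pointwise
\[
\frac{1}{6}(|\WW^{+}|+|\WW^{-}|)^2 \le \Big(2(1-\epsilon)\lambda-\frac{S}{3}\Big)^2 = \frac{S^2}{9} - \frac{4}{3}(1-\epsilon)\lambda S + 4(1-\epsilon)^2\lambda^2 ,
\]
and then integrate over $M$. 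This already gives $\int_M (|\WW^{+}|+|\WW^{-}|)^2 \le \int_M \tfrac{2S^2}{3} - 8(1-\epsilon)\lambda \int_M S + 24(1-\epsilon)^2\lambda^2 V(M)$, so the remaining task is to evaluate $\int_M S$.

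The key step is therefore to show $\int_M S = 4\lambda V(M)$, or more precisely to show that the combination $-8(1-\epsilon)\lambda\int_M S + 24(1-\epsilon)^2\lambda^2 V(M)$ equals $-8(1-\epsilon)(1+3\epsilon)\lambda^2 V(M)$; this forces $\int_M S = 4\lambda V(M)$ when $\lambda\ne 0$ (the $\lambda=0$ case being trivial since then both sides involving $\lambda$ vanish and one checks $S\equiv 0$ separately via the remark following \eqref{lapS}). To get $\int_M S = 4\lambda V(M)$ I would integrate the first soliton identity $S + \Delta f = n\lambda = 4\lambda$ over the closed manifold $M$: since $\int_M \Delta f = 0$ by the divergence theorem, this yields $\int_M S = 4\lambda V(M)$ immediately. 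Substituting back and simplifying the constant, $24(1-\epsilon)^2 - 32(1-\epsilon) = 8(1-\epsilon)\big(3(1-\epsilon) - 4\big) = 8(1-\epsilon)(-1-3\epsilon) = -8(1-\epsilon)(1+3\epsilon)$, which is exactly the claimed coefficient.

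Finally, the equality case: equality in the integrated inequality holds if and only if equality holds pointwise almost everywhere in the inequality of Lemma \ref{lemma} (the integration of $\Delta f$ introduces no slack), so by the equality characterization already proved there, $\WW^{\pm}$ must have the form $a^{\pm}\,\mathrm{diag}(-1,-1,2)$ with $a^{\pm}\ge 0$ and $a^{+}+a^{-}=2(1-\epsilon)\lambda - \tfrac{S}{3}$ at every point. I do not expect any serious obstacle here: the only things to be careful about are the sign of the right-hand side when squaring (handled by Lemma \ref{lemma}'s equality discussion, which shows $2(1-\epsilon)\lambda - S/3 \ge \tfrac{1}{\sqrt 6}(|\WW^+|+|\WW^-|)\ge 0$) and the bookkeeping of the constant, both routine. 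The one genuinely substantive input beyond arithmetic is the observation $\int_M \Delta f = 0$, which is where closedness of the GRS is used.
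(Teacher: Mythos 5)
Your proof is correct and follows essentially the same route as the paper: square the pointwise inequality of Lemma \ref{lemma} (legitimate since the right-hand side dominates a nonnegative quantity), integrate over the closed manifold, use $\int_M \SS = 4\lambda V(M)$ from integrating the traced soliton equation $\SS+\Delta f = 4\lambda$, and simplify the constants. The aside about the $\lambda=0$ case is unnecessary, since $\int_M \SS = 4\lambda V(M)$ holds for every $\lambda$, but it does no harm.
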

\begin{proof}
Applying Lemma \ref{lemma}, we compute
\begin{align*}
\int_{M}(2(1-\epsilon)\lambda-\frac{S}{3})^2 =&4(1-\epsilon)^2 \lambda^{2} V(M)-\frac{4(1-\epsilon)\lambda}{3}\int_{M} S +\int_{M}\frac{S^2}{9} \\
=& 4(1-\epsilon)^2 \lambda^{2} V(M)-\frac{4(1-\epsilon)\lambda}{3}4\lambda V(M)+\int_{M}\frac{S^2}{9}\\
=&  4(1-\epsilon) \lambda^{2} V(M) (-\epsilon-\frac{1}{3})+\int_{M}\frac{S^2}{9} .
\end{align*}

\end{proof}
\begin{remark} If we use $S\leq 6(1-\epsilon)\lambda$, then  
\[\int_{M}(|\WW^{+}|+|\WW^{-}|)^2 \leq (\int_{M} S^2 d\mu)(\frac{2}{3}-\frac{2(1+3\epsilon)}{9(1-\epsilon)})=\frac{4(1-3\epsilon)}{9(1-\epsilon)}\int_{M} S^2  .\]
\end{remark}

\begin{lemma} 
\label{intRc}
Let $(M,g,f, \lambda)$ be a closed GRS, then
\[\int_{M}|\Rc|^2 =\int_{M} \frac{\SS^2}{2}  -4\lambda^2 V(M).\]
\end{lemma}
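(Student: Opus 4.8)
The plan is to integrate the soliton identity $\triangle\SS+2|\Rc|^2 = \langle\nabla f,\nabla\SS\rangle+2\lambda\SS$ from (\ref{lapS}) over the closed manifold $M$. First I would observe that $\int_M \triangle\SS = 0$ by the divergence theorem, so the contribution of the Laplacian term vanishes outright. Next I would handle the transport term $\int_M \langle\nabla f,\nabla\SS\rangle$ by integrating by parts, rewriting it as $-\int_M \SS\,\triangle f$. Then I would use the trace of the soliton equation, namely $\SS + \triangle f = n\lambda = 4\lambda$ in dimension four, to substitute $\triangle f = 4\lambda - \SS$, turning $-\int_M \SS\,\triangle f$ into $-\int_M \SS(4\lambda - \SS) = -4\lambda\int_M \SS + \int_M \SS^2$.

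Assembling the pieces, the integrated identity becomes $2\int_M |\Rc|^2 = -4\lambda\int_M \SS + \int_M \SS^2 + 2\lambda\int_M \SS = -2\lambda\int_M \SS + \int_M \SS^2$. To finish I would evaluate $\int_M \SS$: integrating the trace equation $\SS = 4\lambda - \triangle f$ gives $\int_M \SS = 4\lambda V(M) - \int_M \triangle f = 4\lambda V(M)$, since $\int_M \triangle f = 0$ on a closed manifold. Substituting this back yields $2\int_M |\Rc|^2 = -2\lambda(4\lambda V(M)) + \int_M \SS^2 = \int_M \SS^2 - 8\lambda^2 V(M)$, and dividing by $2$ gives exactly $\int_M |\Rc|^2 = \int_M \tfrac{\SS^2}{2} - 4\lambda^2 V(M)$, as claimed.

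There is no real obstacle here — every step is a routine application of the closed-manifold divergence theorem combined with the two basic soliton identities (the trace equation and the evolution equation for $\SS$), both of which are already recorded in the excerpt. The only point requiring a modicum of care is the sign bookkeeping in the two integrations by parts and the fact that the $+2\lambda\int_M\SS$ term from the right-hand side of (\ref{lapS}) partially cancels against the $-4\lambda\int_M\SS$ coming from the transport term, leaving the net coefficient $-2\lambda\int_M\SS$; getting this cancellation right is what produces the clean constant $-4\lambda^2 V(M)$ after dividing by two.
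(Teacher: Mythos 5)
Your proof is correct and follows essentially the same route as the paper: integrate the identity $\triangle\SS+2|\Rc|^2=\langle\nabla f,\nabla\SS\rangle+2\lambda\SS$, integrate the transport term by parts, and use the traced soliton equation $\SS+\triangle f=4\lambda$ together with $\int_M\SS=4\lambda V(M)$. The sign bookkeeping and the final constant match the paper's computation exactly.
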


\begin{proof}
Using equation (\ref{lapS}), we compute:
\begin{align*}
2\int_{M}|\Rc|^2 d\mu &=\int_{M}(2\lambda \SS+\left\langle{\nabla{f},\nabla{\SS}}\right\rangle) d\mu\\
&=2\lambda 4\lambda V(M)-\int_{M}\Delta{f}\SS d\mu d\mu \\
&=8\lambda^{2} V(M)-\int_{M}(4\lambda-\SS)\SS d\mu\\
&=-8\lambda^{2} V(M)+\int_{M}\SS^2 d\mu.
\end{align*}

\end{proof}
The above  results lead to the following estimate on the Euler characteristic.

\begin{proposition}  Let $(M,g,f, \lambda)$ be a closed non-flat  GRS with unit volume, satisfying assumption (\ref{seclb}), then  
$$8\pi^2\chi(M)< \frac{7}{12}\int_{M}\SS^2 d\mu +2\lambda^2 (12\epsilon^2-8\epsilon-3).$$
\end{proposition}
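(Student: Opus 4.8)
The plan is to feed the integral estimates of Subsection~2.3 into the Gauss--Bonnet--Chern formula (\ref{Euler}) and then bookkeep the resulting constants. First I would write, using (\ref{Euler}) and $V(M)=1$,
\[
8\pi^2\chi(M)=\int_M|\WW|^2-\frac12\int_M|\EE|^2+\frac1{24}\int_M S^2 ,
\]
and bound each of the three pieces. For the Weyl piece, the pointwise inequality $|\WW|^2=|\WW^+|^2+|\WW^-|^2\le(|\WW^+|+|\WW^-|)^2$ together with Lemma~\ref{sumWnorm} gives
\[
\int_M|\WW|^2\le\int_M\frac{2S^2}{3}-8(1-\epsilon)(1+3\epsilon)\lambda^2 .
\]
For the traceless Ricci piece, the pointwise identity $|\EE|^2=|\Rc|^2-\tfrac14 S^2$ combined with Lemma~\ref{intRc} gives the exact value $\int_M|\EE|^2=\int_M\tfrac14 S^2-4\lambda^2$. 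The scalar piece is left untouched.

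Substituting these into the displayed Gauss--Bonnet--Chern identity and grouping terms, the coefficient of $\int_M S^2$ becomes $\tfrac23-\tfrac18+\tfrac1{24}=\tfrac7{12}$, and the coefficient of $\lambda^2$ becomes $-8(1-\epsilon)(1+3\epsilon)+2=2(12\epsilon^2-8\epsilon-3)$, which already yields the asserted bound with ``$\le$'' in place of ``$<$''. Up to here everything is a mechanical computation once Lemmas~\ref{lemma}, \ref{sumWnorm} and \ref{intRc} are in hand.

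To promote ``$\le$'' to ``$<$'' one must show that the non-flatness hypothesis rules out equality. Equality in the chain above forces equality simultaneously in Lemma~\ref{sumWnorm} and in the pointwise step $|\WW^+|^2+|\WW^-|^2=(|\WW^+|+|\WW^-|)^2$; by the equality discussion in Lemmas~\ref{lemma} and \ref{sumWnorm} this means that at every point $\WW^{\pm}=a^{\pm}\,\text{diag}(-1,-1,2)$ with $a^{\pm}\ge0$, that $a^++a^-=2(1-\epsilon)\lambda-\tfrac13 S$, and that $|\WW^+|\,|\WW^-|\equiv0$. Since a GRS is real-analytic, one of $\WW^+,\WW^-$ then vanishes identically, so $(M,g)$ is half-conformally flat; one would then invoke the rigidity classification of half-conformally-flat GRS's cited in the Introduction, together with the constraints that $M$ is closed and that $\epsilon<1/3$ strictly (which excludes the borderline round model), to force $(M,g)$ to be flat, contrary to hypothesis.

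I expect this final step---distilling from the equality case an honest contradiction with non-flatness, and in particular verifying exactly which model geometries survive the constraints ``closed'' and ``$\epsilon<1/3$''---to be the only genuine obstacle; the remainder is routine manipulation of the constants assembled in Subsection~2.3.
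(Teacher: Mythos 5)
Your derivation of the non-strict inequality is correct and is essentially the computation the paper intends: Gauss--Bonnet--Chern (\ref{Euler}) combined with Lemma \ref{sumWnorm} for $\int_M|\WW|^2\le\int_M(|\WW^{+}|+|\WW^{-}|)^2$ and Lemma \ref{intRc} fed through $|\EE|^2=|\Rc|^2-\tfrac14\SS^2$. Your bookkeeping $\tfrac23-\tfrac18+\tfrac1{24}=\tfrac7{12}$ and $-8(1-\epsilon)(1+3\epsilon)+2=2(12\epsilon^2-8\epsilon-3)$ is exactly what produces the stated constants; the intermediate line displayed in the paper's proof, which replaces $-\tfrac12\int_M|\EE|^2+\tfrac1{24}\int_M\SS^2$ by $+\tfrac12\int_M|\Rc|^2-\tfrac1{12}\int_M\SS^2$, would only yield the coefficient $\tfrac56$ of $\int_M\SS^2$, so on this half your version is the one that actually matches the statement.

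The genuine gap is the strictness, which you explicitly leave open, and the route you sketch does not close it. From $|\WW^{+}||\WW^{-}|\equiv0$ and analyticity you do get half conformal flatness, but the classification quoted in the Introduction for closed shrinkers with one half of the Weyl tensor vanishing produces finite quotients of $S^4$ and $CP^2$, not flat manifolds, so ``invoke the classification \ldots to force $(M,g)$ to be flat'' is not an available step. The paper argues differently: it uses the equality case of Lemma \ref{sumWnorm} to put $\WW^{+}$ in the degenerate form $a^{+}\mathrm{diag}(-1,-1,2)$ with $a^{+}=2(1-\epsilon)\lambda-\tfrac13\SS$, appeals to \cite[Theorem 1.1]{cw11} to conclude $\WW^{+}=0$ or $\Rc=0$, and only then uses the locally conformally flat classification. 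Moreover, your worry about ``which model geometries survive'' is exactly on target: the Fubini--Study metric on $CP^2$, viewed as a GRS with constant potential and normalized to unit volume, has $\overline{K}=K\ge\lambda/6$, hence satisfies (\ref{seclb}) with $\epsilon=1/6<1/3$, and it has $\WW^{-}=0$, $\EE=0$, $\WW^{+}=\tfrac{\SS}{12}\mathrm{diag}(-1,-1,2)$, $\lambda=\SS/4$ with $\SS$ constant. Then at $\epsilon=1/6$ one has $2\lambda^2(12\epsilon^2-8\epsilon-3)=-\tfrac12\SS^2$, so the right-hand side equals $\tfrac7{12}\SS^2-\tfrac12\SS^2=\tfrac1{12}\SS^2$, which by (\ref{Euler}) (with $|\WW^{+}|^2=\SS^2/24$) is precisely $8\pi^2\chi(CP^2)$: every inequality in your chain becomes an equality. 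So no completion of your sketch can deliver the strict ``$<$'' without excluding this borderline configuration, and in checking the paper's own treatment you should verify that the hypotheses of \cite[Theorem 1.1]{cw11} genuinely rule it out.
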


\begin{proof}
By the Gauss-Bonnet-Chern formula,
\begin{align*}
8\pi^2\chi(M)=&\int_{M}(|\WW|^2-\frac{1}{2}|\EE|^2+\frac{\SS^2}{24})d\mu\\
\leq &\int_{M}(|\WW^{+}|+|\WW^{-}|)^2 d\mu+\frac{1}{2}\int_{M}|\Rc|^2 d\mu-\int_{M}\frac{\SS^2}{12}d\mu.
\end{align*}

Applying Lemmas \ref{sumWnorm} and  \ref{intRc} yields the inequality. \\

We now claim that the equality case can not happen. Suppose otherwise then $|\WW^{+}||\WW^{-}|=0$ and equality also happens in Lemma \ref{sumWnorm}. By the regularity theory for solitons \cite{bando87}, we can choose an orientation such that $|\WW^{-}|\equiv 0$. Hence $W^{+}=\text{diag}(-a^{+}, -a^{+}, 2a^{+})$ with $a^{+}=2(1-\epsilon)\lambda-\frac{S}{3}$, then by \cite[Theorem 1.1]{cw11}, we have $W^{+}=0$ or $\Rc=0$ .

 In the first case,  by the classification of locally conformally flat four-dimensional closed GRS's as discussed in Introduction, $(M,g)$ is flat, this is a contradiction.
 
 In the second case, $\Rc=0$ implies $\SS=0=\lambda$, and since equality happens in Lemma \ref{sumWnorm}, $\WW^{+}=0$. Hence the above argument applies. 

\end{proof}
\begin{remark} The Euler characteristic of a closed Ricci soliton has been studied by \cite{Derd06}. If the manifold is Einstein and $\epsilon=0$, we recover some results of \cite{gl99}. 
\end{remark}

\section{\textbf{A Bochner-Weitzenb\"ock Formula}}

In this section, we prove Theorem \ref{WSoliton}, a new Bochner-Weitzenb\"ock formula for the Weyl tensor of GRS's, which generalizes the one for Einstein manifolds. Bochner-Weitzenb\"ock formulas have been proven a powerful tool to find connections between topology and geometry with certain curvature conditions (for example, see \cite{g98, pe06book, wu88}). 

   

Particularly, in dimension four, if $\delta \WW^{+}=0$ (this contains all Einstein manifolds), we have the following well-known formula (see \cite[16.73]{besse}),
\begin{equation}
\label{bwdiv}
\Delta|\WW^{+}|^2=2|\nabla \WW^{+}|^2+\SS|\WW^{+}|^2-36 \text{det} \WW^{+}.
\end{equation}
This equation plays a crucial role to obtain a $L_{2}$-gap theorem of the Weyl tensor and to study the classification problem of Einstein manifolds (cf. \cite{g00, gl99,yangdg00}). \\


Our first technical lemma gives a formula of $\Delta_{f}W$ in a local frame. Also it is noticed that the Einstein summation convention is used repeatedly here. 

\begin{lemma} 
\label{generalw}
Let $(M,g,f, \lambda)$ be a GRS and $\{e_{i}\}_{i=1}^{n}$ be a local normal frame, then the following holds,
\begin{align}
\Delta_{f}W_{ijkl}=& 2\lambda W_{ijkl}-2(C_{ijkl}-C_{ijlk}+C_{ikjl}-C_{iljk}) \nonumber \\
&-\frac{2}{(n-2)^2}g^{pq}(\Rc_{ip}\Rc_{qk}g_{jl}-\Rc_{ip}\Rc_{ql}g_{jk}+\Rc_{jp}\Rc_{ql}g_{ik}-\Rc_{jp}\Rc_{qk}g_{il}) \nonumber\\
&+\frac{2\SS}{(n-2)^2}(\Rc_{ik}g_{jl}-\Rc_{il}g_{jk}+\Rc_{jl}g_{ik}-\Rc_{jk}g_{il}) \\
&-\frac{2}{n-2}(\RR_{ik}\RR_{jl}-\RR_{jk}\RR_{il})-\frac{2(\SS^2-|\Rc|^2)}{(n-1)(n-2)^2}(g_{ik}g_{jl}-g_{il}g_{jk}),\nonumber
\end{align}
here $C_{ijkl}=g^{pq}g^{rs}\WW_{pijr}\WW_{slkq}$.
\end{lemma}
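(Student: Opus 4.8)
The plan is to derive the identity from the corresponding Bochner--Weitzenb\"ock formula for the full Riemann tensor on a GRS, and then to project onto the Weyl component via the curvature decomposition (\ref{curvdecom}).

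First I would record the identity for $\RR$ itself. Since a GRS is a self-similar solution of the Ricci flow, writing $g(t)=\rho(t)\phi(t)^{*}g$ and differentiating at $t=0$ equates Hamilton's evolution $\partial_{t}\RR=\Delta\RR+\mathcal{Q}(\RR)$, where $\mathcal{Q}$ is the usual quadratic reaction term built from $B_{ijkl}=g^{pq}g^{rs}\RR_{pijr}\RR_{slkq}$ together with Ricci-contraction terms, with $\mathcal{L}_{\nabla f}\RR$ plus a rescaling contribution. Substituting $\He f=\lambda g-\Rc$ turns $\mathcal{L}_{\nabla f}\RR$ into $\nabla_{\nabla f}\RR$ plus algebraic curvature terms, and the Ricci-contraction pieces of $\mathcal{Q}$ cancel against the ones produced by the Lie derivative. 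The outcome is an identity of the form $\Delta_{f}\RR_{ijkl}=2\lambda \RR_{ijkl}+\widetilde{\mathcal{Q}}(\RR)_{ijkl}$, where $\widetilde{\mathcal{Q}}(\RR)$ is a fixed quadratic expression carrying the algebraic symmetries of a curvature tensor; in a suitable convention its pure Weyl part is exactly $-2(C_{ijkl}-C_{ijlk}+C_{ikjl}-C_{iljk})$ with $C$ as in the statement. (Alternatively this identity is obtained directly from the second Bianchi identity and (\ref{GRS}).)

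Next I would solve (\ref{curvdecom}) for the Weyl tensor,
\[
\WW=\RR+\frac{\SS}{2(n-1)(n-2)}\,g\circ g-\frac{1}{n-2}\,\Rc\circ g,
\]
and apply $\Delta_{f}$. Because $g$ is parallel, only $\Delta_{f}\SS$ and $\Delta_{f}\Rc$ enter: from (\ref{lapS}) and the definition of $\Delta_{f}$ one gets $\Delta_{f}\SS=2\lambda\SS-2|\Rc|^{2}$, while the standard GRS Ricci identity (the trace of Step 1, or proved the same way) gives $\Delta_{f}\RR_{ij}=2\lambda\RR_{ij}-2\RR_{ikjl}\RR_{kl}$. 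Substituting these together with the Step 1 formula into $\Delta_{f}\WW=\Delta_{f}\RR+\frac{\Delta_{f}\SS}{2(n-1)(n-2)}g\circ g-\frac{1}{n-2}(\Delta_{f}\Rc)\circ g$, and then expanding $\widetilde{\mathcal{Q}}(\RR)$ by inserting the decomposition $\RR=\WW+\frac{\SS}{2n(n-1)}g\circ g+\frac{1}{n-2}\EE\circ g$ into each quadratic slot, the pure Weyl part produces the $C_{ijkl}$ terms, while the mixed Weyl--Ricci and Weyl--scalar terms (handled using $g^{pq}\WW_{pijq}=0$ and the Kulkarni--Nomizu algebra) either vanish or combine with the $\Delta_{f}\Rc$ and $\Delta_{f}\SS$ contributions so that the only surviving Weyl-linear term is $2\lambda\WW_{ijkl}$. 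The residual Ricci--Ricci, Ricci--scalar and scalar--scalar terms then assemble into the four explicit tensors with the stated $(n-2)$-power denominators.

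The main obstacle is this last collecting step: expanding the eight index-permutation terms in $\widetilde{\mathcal{Q}}$ after substituting the decomposition, contracting the Kulkarni--Nomizu products correctly, and checking that every Weyl-linear cross term is killed by the $\Delta_{f}$-derivatives of the Ricci and scalar parts. Exploiting the fact that both sides have the algebraic symmetries of a curvature tensor reduces the verification to a small number of index patterns (e.g.\ the $(12,12)$, $(12,13)$ and $(12,34)$ components in a normal frame), but the sign- and coefficient-tracking through the $\frac{1}{(n-2)^{2}}$ and $\frac{1}{(n-1)(n-2)^{2}}$ factors is where the computation is most delicate.
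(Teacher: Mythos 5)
Your proposal is correct in substance but follows a genuinely different route from the paper. The paper stays at the level of the Weyl tensor throughout: it computes $\partial_{t}\WW\mid_{t=0}$ for the self-similar solution $g(t)=\rho(t)\phi(t)^{*}g$ via the Lie derivative and the soliton equation $\nabla\nabla f=\lambda g-\Rc$, and then equates the result with the known evolution equation of $\WW$ under the Ricci flow quoted from \cite[Prop.\ 1.1]{cm11}; the only cancellation it needs is that of the four $\Rc\ast\WW$ contraction terms. You instead work at the level of the full Riemann tensor: you first obtain $\Delta_{f}\RR=2\lambda\RR+\widetilde{\mathcal{Q}}(\RR)$ (by the same self-similarity argument, or directly from the second Bianchi identity), and then apply $\Delta_{f}$ to $\WW=\RR+\frac{\SS}{2(n-1)(n-2)}g\circ g-\frac{1}{n-2}\Rc\circ g$ together with $\Delta_{f}\SS=2\lambda\SS-2|\Rc|^{2}$ (from (\ref{lapS})) and $\Delta_{f}\Rc_{ij}=2\lambda\Rc_{ij}-2\RR_{ikjl}\Rc_{kl}$; the linear terms then recombine into $2\lambda\WW_{ijkl}$ exactly as you say. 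What your route buys is self-containedness: everything reduces to the three standard soliton identities and no flow-evolution formula for $\WW$ is needed. What it costs is that the entire algebraic content of \cite[Prop.\ 1.1]{cm11} --- expanding $\widetilde{\mathcal{Q}}(\RR)$ through the decomposition (\ref{curvdecom}) and verifying that every Weyl--Ricci cross term (both the type with $\Rc$ contracted into one slot of $\WW$, and the $(\WW_{ipkq}\Rc_{pq})\circ g$ type produced by the $\Delta_{f}\Rc$ contribution) cancels, leaving only the $C_{ijkl}$ terms and the displayed Ricci/scalar quadratic terms --- must now be done by hand. That cancellation is true, but it is exactly the step you describe rather than execute; your plan to exploit the curvature symmetries and check a few index patterns in a normal frame is sensible (and the Einstein case $\Rc=\frac{\SS}{n}g$ is a useful consistency check, where all cross terms vanish and one recovers the classical formula behind (\ref{bwdiv})), but to have a complete proof you must either push that expansion through or cite the flow computation of \cite{cm11}, at which point your argument essentially reduces to the paper's.
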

\begin{proof}
First, we recall how a GRS can be realized as a self-similar solution to the Ricci flow (\ref{RF}), as in Section 2.1. 

Let $\tau(t)=1-2\lambda t$ and $\phi(x,t)$ is a family of diffeomorphisms generated by $X=\frac{1}{\tau}\nabla{f}$. For $g(0)=g, g(t)=\tau(t)\phi^{*}(t)g$, then $(M,g(t))$ is a solution to the Ricci flow. Furthermore, $\WW(t)=\tau\phi^{*}\WW$.
Let $p$ be  a point  in M and $\{e_{i}\}_{i=1}^n$ be a basis of $T_{p}M$, and we obtain a local normal frame via  
extending $e_{i}$ to a neighborhood by parallel translation along geodesics with respect to $g(0)$. 
First, we observe,
\begin{equation}\label{w1} 
\ddt \WW(t)_{ijkl}\mid_{t=0}= (\ddt \tau \phi^{*}\WW)_{ijkl}\mid_{t=0}
=-\frac{2\lambda}{\tau}\WW_{ijkl}+(L_{\nabla{f}}\WW)_{ijkl},
\end{equation}
where $L_{X}$ is the Lie derivative with respect to X. Furthermore, by definition, 
\begin{align}
L_{\nabla f}\WW_{ijkl}=&\nabla{f}(\WW_{ijkl})-\WW([\nabla{f},e_{i}],e_{j},e_{k},e_{l})-\WW(e_{i},[\nabla{f},e_{j}],,e_{k},e_{l})\nonumber \\
\label{w2}
&-\WW(e_{i},e_{j}, [\nabla{f},e_{k}],e_{l})-\WW(e_{i},e_{j},e_{k},[\nabla{f},e_{l}]).
\end{align} We calculate that 
\begin{align}
\WW([\nabla{f},e_{i}],e_{j},e_{k},e_{l})=& \WW(\nabla_{\nabla f}e_{i}-\nabla_{e_{i}}\nabla{f}, e_{j},e_{k},e_{l})=-\WW(\nabla_{e_{i}}\nabla{f},e_{j},e_{k},e_{l}).\nonumber
\end{align}
By the soliton structure, $\nabla_{e_{i}}\nabla_{.}f =-\Rc(e_{i},.)+\lambda g(e_{i},.) $. Thus,
\begin{align}
\WW([\nabla{f},e_{i}],e_{j},e_{k},e_{l})
&=-\WW(\lambda e_i-\Rc(e_i), e_j, e_k, e_l)\nonumber \\
\label{w3} &=-\lambda\WW_{ijkl}+g^{pq}\Rc_{ip}\WW_{qjkl}.
\end{align}
 Combining (\ref{w1}),(\ref{w2}), and 
(\ref{w3}) we obtain,
\begin{align*}
\ddt \WW(t)_{ijkl}\mid_{t=0}=&\nabla{f}(\WW_{ijkl})+2\lambda \WW_{ijkl}\\
&-g^{pq}(\Rc_{ip}\WW_{qjkl}+\Rc_{jp}\WW_{iqkl}+\Rc_{kp}\WW_{ijql}+\Rc_{ip}\WW_{qjkl}).
\end{align*}
Along the Ricci flow, the Weyl tensor is evolving under the following  (for example, see \cite[Prop 1.1]{cm11})
\begin{align}
\ddt \WW(t)_{ijkl}\mid_{t=0}=&\Delta(\WW_{ijkl})+2(C_{ijkl}-C_{ijlk}+C_{ikjl}-C_{iljk})\nonumber \\
&-g^{pq}(\Rc_{ip}\WW_{qjkl}+\Rc_{jp}\WW_{iqkl}+\Rc_{kp}\WW_{ijql}+\Rc_{ip}\WW_{qjkl})\nonumber \\
&+\frac{2}{(n-2)^2}g^{pq}(\Rc_{ip}\Rc_{qk}g_{jl}-\Rc_{ip}\Rc_{ql}g_{jk}+\Rc_{jp}\Rc_{ql}g_{ik}-\Rc_{jp}\Rc_{qk}g_{il})\nonumber\\
\label{generalW1}
&+\frac{2\SS}{(n-2)^2}(\Rc_{ik}g_{jl}-\Rc_{il}g_{jk}+\Rc_{jl}g_{ik}-\Rc_{jk}g_{il}) \\
&+\frac{2}{n-2}(\RR_{ik}\RR_{jl}-\RR_{jk}\RR_{il})+\frac{2(S^2-|\Rc|^2)}{(n-1)(n-2)^2}(g_{ik}g_{jl}-g_{il}g_{jk}).\nonumber
\end{align}
The result then follows. 
\end{proof}

Furthermore, in dimension four, we are able to obtain significant  simplification due to the special structure given by the Hodge operator. That gives the proof of our first main theorem.

\begin{proof} {\bf (Theorem \ref{WSoliton})}
We observe that, 
\begin{align*}
\left\langle{\WW^{+},\Delta_{f}\WW^{+}}\right\rangle&=\left\langle{\WW^{+},\Delta\WW^{+}}\right\rangle-\left\langle{\WW^{+},\nabla_{\nabla f}\WW^{+}}\right\rangle\\
&=\left\langle{\WW^{+},\Delta\WW^{+}}\right\rangle-\frac{1}{2}\nabla_{\nabla f}|\WW^{+}|^2.
\end{align*}
Therefore,
\[\Delta_{f} |\WW^{+}|^2=\Delta |\WW^{+}|^2-\nabla_{\nabla f}|\WW^{+}|^2=2\left\langle{\WW^{+},\Delta_{f} \WW^{+}}\right\rangle+2|\nabla \WW^{+}|^2.\]
To calculate the first term of the right hand side, we use the normal form of the Weyl tensor 
(\ref{normalform}). As usual, a local normal frame is obtained by parallel translation along geodesic lines. Then (\ref{Hodgebase}) gives a basis of eigenvectors $\{\alpha_{i} \}_{i=1}^3$ of $\WW^{+}$ with corresponding eigenvalues $\lambda_{i}=a_{i}+b_{i}$. Consequently, 
\begin{equation}
\label{mt11}
\left\langle{\WW^{+},\Delta_{f} \WW^{+}}\right\rangle=\sum_{i}\lambda_{i}\Delta_{f} \WW^{+}(\alpha_{i},\alpha_{i}).
\end{equation}
In order to use Lemma \ref{generalw}, it is necessary to calculate the $C_{ijkl}$ terms. By the normal form, we have
\begin{align*}
C_{1212}&=a_1^2+b_{2}^2+b_{3}^2, &C_{1234}&=-2a_1b_3,\\
C_{1221}&=-2b_2b_3, &C_{1243}&=2a_1b_2,\\
C_{1122}&=2a_2a_{3}, &C_{1324}&=2a_2b_3,\\
C_{1221}&=-2b_2b_3, &C_{1423}&=-2a_3b_2.
\end{align*}
Thus, 
\begin{align*}
\Delta_{f}\WW_{1212}=&2\lambda a_{1}-2(a_{1}^2+b_{1}^2+2a_{2}a_{3}+2b_{2}b_{3})\\
&-\frac{1}{2}\sum_{p}(\Rc_{1p}^2+\Rc_{2p}^2)+\frac{S}{2}(\Rc_{11}+\Rc_{12})\\
&-(\Rc_{11}\RR_{22}-\Rc_{12}^2)-\frac{1}{6}(S^2-|\Rc|^2),\\
\Delta_{f}\WW_{1234}=&2\lambda b_{1}-4(a_{1}b_{1}+a_{2}b_{3}+a_{3}b_{2})+(\Rc_{13}\Rc_{24}-\Rc_{23}\Rc_{14}).
\end{align*}
Therefore,
\begin{equation}
\label{mt12}
\Delta_{f}\WW^{+}(\alpha_{1},\alpha_{1})=2\lambda \lambda_{1}-2\lambda_{1}^2-4\lambda_{2}\lambda_{3}-\frac{1}{12}(|\Rc|^2-S^2)-T_{1},
\end{equation}
in which,
\begin{align*}
2T_{1}=&\Rc_{11}\Rc_{22}+\Rc_{33}\Rc_{44}+2\Rc_{13}\Rc_{24}-\Rc_{12}^2-2\Rc_{23}\Rc_{14}-\Rc_{34}^2\nonumber\\
=& (\Rc\circ \Rc)(\alpha_{1},\alpha_{1}).\nonumber
\end{align*}
Similar calculations hold when replacing $\alpha_1$ by $\alpha_{2}$, $\alpha_{3}$,
\begin{align}
\label{mt13}
\Delta_{f}\WW^{+}(\alpha_{2},\alpha_{2})=&2\lambda \lambda_{2}-2\lambda_{2}^2-4\lambda_{1}\lambda_{3}-\frac{1}{12}(|\Rc|^2-S^2)-\frac{1}{2}\Rc\circ  \Rc (\alpha_2, \alpha_2),\\
\label{mt14}
\Delta_{f}\WW^{+}(\alpha_{3},\alpha_{3})=&2\lambda \lambda_{3}-2\lambda_{3}^2-4\lambda_{1}\lambda_{2}-\frac{1}{12}(|\Rc|^2-S^2)-\frac{1}{2}\Rc\circ  \Rc (\alpha_3, \alpha_3).
\end{align}
 Combining (\ref{mt11}), (\ref{mt12}), (\ref{mt13}), (\ref{mt14}) yields,
\begin{align*}
\left\langle{\WW^{+},\Delta_{f}\WW^{+}}\right\rangle=&2\lambda |\WW^{+}|^2-18\text{det}\WW^{+}-\sum_{i}T_{i}\lambda_{i}\\
=&2\lambda |\WW^{+}|^2-18\text{det}\WW^{+}-\frac{1}{2}\left\langle{\Rc\circ \Rc,\WW^{+}}\right\rangle. 
\end{align*}
The first equality then follows. The second equality comes from the soliton equation, the  property that $\WW^{+}$ is trace-free and Remark \ref{HW0}. 
\end{proof}

\section{\textbf{Applications of the Bochner-Weitzenb\"ock Formula}}

\subsection{A Gap Theorem for the Weyl Tensor}
 In \cite{g00}, under the assumptions $\WW^{+}\neq 0$, $\delta\WW^{+}=0$, and the positivity of the Yamabe constant, M. Gursky proves the following inequality, relating $||\WW^{+}||_{L_{2}}$ with topological invariants of a closed four-manifold,
 \begin{equation}
 \label{WEinstein}
 \int_{M}|\WW^{+}|^2 d\mu\geq \frac{4}{3}\pi^2(2\chi(M)+3\tau(M)).
 \end{equation}
 Our main result in this section  is to prove an analog for GRS's. It is noted that the particular structure of GRS allows us to relax the harmonic self-dual condition above  with the cost of a worse coefficient due to the absence of an improved Kato\rq{}s inequality.
 
 \begin{theorem}
 \label{gapW}
  Let $(M,g,f, \lambda)$ be a closed four-dimensional shrinking GRS with
  \begin{equation}
  \label{gapWa}
  \int_{M}\left\langle{\WW^{+},\text{Hess}f\circ \text{Hess}{f}}\right\rangle \leq \frac{2}{3} \int \SS|\WW^{+}|^2,
  \end{equation}
  then, unless $\WW^{+}\equiv 0$, 
  \begin{equation}
  \label{gapW1}
  \int_{M}|\WW^{+}|^2 d\mu> \frac{4}{11}\pi^2(2\chi(M)+3\tau(M)).
  \end{equation}
  \end{theorem}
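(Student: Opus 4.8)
\textbf{Proof proposal for Theorem \ref{gapW}.}

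The plan is to integrate the Bochner-Weitzenb\"ock formula \eqref{Wsolitoneqn} over the closed manifold $M$ and then combine it with the solution of the Yamabe problem, exactly in the spirit of Gursky \cite{g00}, but tracking the worse constants that arise because we cannot assume $\delta\WW^{+}=0$. First I would integrate \eqref{Wsolitoneqn} in the form involving $\He f\circ\He f$: since $\int_M \Delta_f|\WW^{+}|^2\,d\mu = 0$ (because $\Delta_f u = e^{f}\,\mathrm{div}(e^{-f}\nabla u)$ up to the weighted measure, or more simply by integrating by parts against the weighted volume $e^{-f}d\mu$ — one must be a little careful here and integrate against $d\mu$ rather than the weighted measure, so $\int_M\Delta_f|\WW^+|^2 = \int_M \langle\nabla f,\nabla|\WW^+|^2\rangle = -\int_M \Delta f\,|\WW^+|^2$, which gets absorbed using $\SS+\Delta f = 4\lambda$). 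Carrying this out yields an identity of the schematic form
\[
\int_M\Big(2|\nabla\WW^{+}|^2 + (4\lambda - \Delta f)|\WW^{+}|^2 - 36\det\WW^{+} - \langle\He f\circ\He f,\WW^{+}\rangle\Big)\,d\mu = 0,
\]
and then $4\lambda - \Delta f = \SS$, so the zeroth-order term is exactly $\int_M \SS|\WW^{+}|^2$, matching the Einstein-case formula \eqref{bwdiv}. The hypothesis \eqref{gapWa} is precisely what lets us discard the $\langle\He f\circ\He f,\WW^{+}\rangle$ term at the cost of replacing $\int\SS|\WW^{+}|^2$ by $\tfrac{1}{3}\int\SS|\WW^+|^2$ on the relevant side; this is the analogue of Gursky's use of the Yamabe-positive conformal Laplacian, and it is where the coefficient degrades from $\tfrac{4}{3}$ to $\tfrac{4}{11}$.

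Next I would bring in the two pointwise algebraic inputs that make Gursky's argument run. One is the sharp inequality $6\det\WW^{+} \le \tfrac{2}{\sqrt6}|\WW^{+}|^3$ (equivalently $-\tfrac{1}{\sqrt6}|\WW^+| \le \lambda^+_{\min} \le \lambda^+_{\max} \le \tfrac{2}{\sqrt6}|\WW^+|$ for the eigenvalues), used to bound $-36\det\WW^{+}$ from below by $-\tfrac{12}{\sqrt6}|\WW^{+}|^3$. The other is Kato's inequality $|\nabla|\WW^{+}||^2 \le |\nabla\WW^{+}|^2$ — note that here, unlike in Gursky, we do \emph{not} have the improved Kato inequality $|\nabla|\WW^+||^2 \le \tfrac{3}{5}|\nabla\WW^+|^2$ available (that one needs $\delta\WW^+=0$), which is the stated reason for the weaker constant. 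Setting $u = |\WW^{+}|$, one then feeds the resulting differential inequality
\[
2\int_M |\nabla u|^2 + \tfrac{1}{3}\int_M \SS\,u^2 \le \tfrac{12}{\sqrt6}\int_M u^3
\]
into the Yamabe machinery: on a shrinking GRS the scalar curvature is positive (by the Remark after \eqref{lapS}), so the operator $-2\Delta + \tfrac{1}{3}\SS$ is a positive operator conformal to the Yamabe operator, and Sobolev/Yamabe gives $\big(\int u^3\big) \ge c\,\big(\int u^3\big)^{1/3}\cdot(\text{something})$... more precisely one uses that $\int(2|\nabla u|^2 + \tfrac13\SS u^2) \ge Y\big(\int u^{4}\big)^{1/2}$ is false dimensionally in $n=4$; instead the correct move (as in \cite{g00}) is to use the Yamabe constant to control $\int u^3$ in terms of $\int(|\nabla u|^2 + \tfrac16\SS u^2)$ after the substitution $u \mapsto u$ in the $L^4$ Sobolev inequality, yielding $\int u^3 \le (\text{Yamabe const})^{-1/2}\big(\int u^4 \big)^{?}$ — the bookkeeping here is the routine part. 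Combining the two estimates forces $\int_M |\WW^{+}|^2$ to be bounded \emph{below} by a multiple of a scale-invariant topological quantity, and identifying that quantity via the Gauss-Bonnet and signature formulas \eqref{Euler}, \eqref{signature} (namely $2\chi(M)+3\tau(M) = \tfrac{1}{4\pi^2}\int_M(2|\WW^+|^2 - \tfrac12|\EE|^2 + \tfrac{\SS^2}{12})$) produces \eqref{gapW1} with the constant $\tfrac{4}{11}$.

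The main obstacle I anticipate is twofold. First, the precise tracking of constants: reconciling the factor of $\tfrac13$ coming from hypothesis \eqref{gapWa}, the $\tfrac{12}{\sqrt6} = 2\sqrt6$ from the determinant bound, and the (un-improved) Kato inequality so that they assemble into exactly $\tfrac{4}{11}$ requires care, and the strictness of the final inequality \eqref{gapW1} (note the $>$) must be extracted from the equality-analysis — equality in Kato forces $\WW^{+}$ to be a multiple of a parallel section, equality in the determinant bound forces $\WW^{+} = a\,\mathrm{diag}(-1,-1,2)$, and these combined with the GRS structure and the classification results cited in the Introduction (as in the proof of the Euler-characteristic Proposition above) rule out the borderline case unless $\WW^{+}\equiv 0$. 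Second, one must be careful that $u=|\WW^{+}|$ is only Lipschitz, not smooth, where $\WW^{+}$ vanishes; this is handled in the standard way by working on the open set $\{\WW^{+}\neq 0\}$ and using Kato's inequality in the distributional sense (or a Moser-iteration/cutoff argument), exactly as in \cite{g00}, so it is a technical rather than conceptual difficulty.
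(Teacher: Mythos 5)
The first half of your plan is exactly the paper's: integrating (\ref{Wsolitoneqn}) over $M$ (with $\int_M\Delta_f|\WW^{+}|^2=\int_M \Delta f\,|\WW^{+}|^2$, so the zeroth-order coefficient $4\lambda-\Delta f=\SS$ — note your parenthetical sign chain is off, though your displayed identity is right), invoking (\ref{gapWa}) to reduce $\SS$ to $\tfrac13\SS$, and then the plain Kato inequality plus $-36\det\WW^{+}\ge-2\sqrt6\,|\WW^{+}|^3$ to reach $\int_M\bigl(2|\nabla u|^2+\tfrac13(\SS-6\sqrt6\,u)u^2\bigr)\le 0$ for $u=|\WW^{+}|$, with the cutoff regularization near $\{u=0\}$ (the paper uses analyticity of closed GRS's \cite{bando87}); this is Proposition \ref{neghatY}.

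The genuine gap is at the decisive second step: you never supply the mechanism that converts this integral inequality into the topological lower bound, and the mechanism you sketch — using the Yamabe constant as a Sobolev constant to bound $\int u^3$, with unresolved exponents, then "combining" with Gauss--Bonnet — is not how the argument goes and does not close. In the fixed metric the inequality only gives $\int \SS u^2\le 6\sqrt6\int u^3$, which by Cauchy--Schwarz yields nothing like the needed comparison $\int\SS^2\le 216\int|\WW^{+}|^2$, and there is then no way to dominate the $+\tfrac{\SS^2}{24}$ term in $2\chi+3\tau$ (your formula has $\tfrac{\SS^2}{12}$; it should be $\tfrac{\SS^2}{24}$) by $\int|\WW^{+}|^2$. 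The missing idea, following \cite{g00}, is conformal: read $\int_M(Fu^2+6|\nabla u|^2)\le 0$ with $F=\SS-6\sqrt6|\WW^{+}|$ as $\hat Y_{1,6\sqrt6}(M,[g])\le 0$; since $Y(M)>0$ (a nonflat shrinker has $\SS>0$ \cite{prs09}) and $\hat Y\le Y<Y(S^4)$ because $\WW^{+}\not\equiv0$ (Lemma \ref{modelS}), the modified Yamabe problem has a minimizer, producing a conformal metric $\tilde g$ with $\widetilde\SS-6\sqrt6|\widetilde\WW^{+}|=\hat Y\le 0$ \emph{pointwise}; Cauchy--Schwarz then gives $\int\widetilde\SS^2\le 216\int|\widetilde\WW^{+}|^2$ (Proposition \ref{gursky1}), and Gauss--Bonnet plus the signature formula in $\tilde g$, discarding $-\tfrac14\int|\widetilde\EE|^2$ and using conformal invariance of $\|\WW^{+}\|_{L^2}$, yields $2\pi^2(2\chi+3\tau)\le\tfrac{11}{2}\int_M|\WW^{+}|^2$, i.e.\ the constant $\tfrac4{11}$. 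Finally, the strictness in (\ref{gapW1}) is not obtained from equality analysis in Kato and the determinant bound as you suggest, but from the equality case of (\ref{gapW3}): equality forces $\tilde g$ Einstein, and then \cite[Theorem 1]{gl99} makes (\ref{gapW2}) strict unless $\widetilde\SS\equiv0$, contradicting $Y(M)>0$.
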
  
 
  \begin{remark} 
  By Remark \ref{intanddivplus}, assumption (\ref{gapWa}) is equivalent to 
  \[\int |\delta \WW^{+}|^2\leq \int \frac{\SS}{6}|\WW^{+}|^2.\]
  Thus, it is clearly weaker than the assumption of harmonic self-dual.
  \end{remark}
  
 To prove Theorem \ref{gapW}, we follow an idea of \cite{g00} and introduce a Yamabe-type conformal invariant. 
First, the conformal Laplacian is given by, 
\begin{equation*}
L=-6\Delta +\SS.
\end{equation*}
Furthermore, we define that
\begin{align*}
F_{a, b}=&a\SS-b|\WW^{+}|,\\
L_{a, b}=&-6a\Delta_{g}+F_{a,b}=aL-b\WW^{+},
\end{align*} where $a$ and $b$ are constants to be determined later. 
Under a conformal transformation as described in (\ref{conf}), for any function $\Phi$, we have 
\begin{align*}
\tilde{L}(\Phi)=&u^{-3}L(\Phi u),\\
\widetilde{L}_{a,b}\Phi=& u^{-3}L_{a,b}(\Phi u),\\
\widetilde{F}_{a,b}=& u^{-3}(-6a\Delta_{g}+F_{a,b})u,\\
\int_{M} \widetilde{F}_{a,b} d\widetilde{\mu}=& \int_{M} u(-6a\Delta_{g}+F_{a,b})u d\mu \\
=&\int_{M}(F_{a,b} u^2+6a|\nabla u|^2)d\mu.
 \end{align*}

The Yamabe problem is, for a given Riemannian manifold $(M, g)$, to find a   constant scalar curvature metric in its conformal class $[g]$. That is equivalent to find a critical point of the following functional, for any $C^{2}$ positive function u, let $\tilde{g}=u^2 g$, define
\begin{equation*}
Y_{g}[u]=\frac{\left\langle{u, Lu}\right\rangle_{L_2}}{||u||_{L_{4}}^2}=\frac{\int_{M}\widetilde{\SS}d\widetilde{\mu}}{\sqrt{\int_{M}d\widetilde{\mu}}}.
\end{equation*}
Then the conformal invariant $Y$ is defined as 
\begin{equation*}
Y(M,[g])=\inf\{Y_{g}[u]\text{: u is a positive $C^{2}$ function on M}\}.
\end{equation*}
For an expository account on the Yamabe problem, see \cite{lp87}. \\

As $F_{a,b}$ conformally transforms like the scalar curvature, in analogy with the discussion above, we can define the following conformal invariant. 

\begin{definition} Given a Riemannian manifold $(M,g)$, define
\begin{equation*} 
\hat{Y}_{a,b}(M,[g])=\inf \{(Y_{a,b})_{g}[u]\text{: u is a positive $C^{2}$ function on M}\},
\end{equation*} where
\begin{equation*}
(\hat{Y}_{a,b})_{g}[u]=\frac{\left\langle{u, L_{a,b}u}\right\rangle_{L_2}}{||u||_{L_4}^2}=\frac{\int_{M}\widetilde{F_{a,b}}d\widetilde{\mu}}{\sqrt{\int_{M}d\widetilde{\mu}}}.
\end{equation*}
\end{definition}

For the case of interest, we shall denote
\begin{align*}
F=&F_{1, 6\sqrt{6}}=\SS-6\sqrt{6}|\WW^{+}|,\\
\hat{Y}(M)=&\hat{Y}_{1, 6\sqrt{6}}(M,[g]),
\end{align*}
when the context is clear. First we observe the following simple inequality. 

\begin{lemma}
\label{modelS}
Let $(M^n,g)$ be a closed n-dimensional Riemannian manifold which is not locally conformally flat, and $(S^n,g_{\text{sd}})$ be the sphere with standard metric. Then
\begin{equation}
\hat{Y}(M,[g])\leq Y(M,[g])<Y(S^n, [g_{\text{sd}}])=\hat{Y}(S^n, [g_{\text{sd}}]).
\end{equation}
\end{lemma}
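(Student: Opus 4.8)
\textbf{Proof proposal for Lemma \ref{modelS}.}

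The plan is to establish the three assertions separately: the inequality $\hat{Y}(M,[g])\le Y(M,[g])$, the strict inequality $Y(M,[g])<Y(S^n,[g_{\mathrm{sd}}])$, and the equality $Y(S^n,[g_{\mathrm{sd}}])=\hat{Y}(S^n,[g_{\mathrm{sd}}])$. For the first, I would simply compare the Rayleigh quotients pointwise: since $F=F_{1,6\sqrt6}=\SS-6\sqrt6|\WW^{+}|\le \SS$ with equality only where $\WW^{+}=0$, for any positive $C^2$ function $u$ we have $(\hat Y)_g[u]=\frac{\int_M(F u^2+6|\nabla u|^2)\,d\mu}{\|u\|_{L_4}^2}\le \frac{\int_M(\SS u^2+6|\nabla u|^2)\,d\mu}{\|u\|_{L_4}^2}=Y_g[u]$, using the conformal transformation formulas for $\int_M\widetilde{F}\,d\widetilde\mu$ recorded just before the statement. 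Taking the infimum over $u$ gives $\hat{Y}(M,[g])\le Y(M,[g])$.

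For the middle inequality $Y(M,[g])<Y(S^n,[g_{\mathrm{sd}}])$, I would invoke the resolution of the Yamabe problem together with Aubin's strict inequality: it is a classical theorem (Aubin, Schoen; see \cite{lp87}) that $Y(M,[g])\le Y(S^n,[g_{\mathrm{sd}}])$ always, with equality if and only if $(M,g)$ is conformally equivalent to the round sphere. Since a manifold conformal to $(S^n,g_{\mathrm{sd}})$ is locally conformally flat, the hypothesis that $(M,g)$ is \emph{not} locally conformally flat rules out the equality case, giving the strict inequality. (For $n=3$ one should note the Weyl tensor is vacuously zero, so ``not locally conformally flat'' forces $n\ge 4$; in fact the lemma is only applied in dimension four, where this subtlety does not arise.)

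For the last equality $Y(S^n,[g_{\mathrm{sd}}])=\hat{Y}(S^n,[g_{\mathrm{sd}}])$, the point is that the standard sphere is locally conformally flat, hence $\WW\equiv 0$ and in particular $\WW^{+}\equiv 0$, so $F_{1,6\sqrt6}=\SS$ identically on $(S^n,g_{\mathrm{sd}})$ and every conformal metric in its class; consequently $L_{1,6\sqrt6}=L$, the functionals $(\hat Y_{1,6\sqrt6})_g$ and $Y_g$ coincide term by term, and their infima agree.

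The main obstacle is really just citing the correct form of Aubin's strict Yamabe inequality and its rigidity statement (equality only for the round sphere) and making sure the ``not locally conformally flat'' hypothesis is used exactly where the rigidity is needed; the rest is a pointwise comparison of Rayleigh quotients using $|\WW^{+}|\ge 0$, which is routine given the conformal transformation identities already displayed in the text.
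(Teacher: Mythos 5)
Your proposal is correct and follows essentially the same route as the paper: the first inequality by comparing the two functionals via $\left\langle{u,Lu}\right\rangle_{L_2}-\left\langle{u,L_{1,b}u}\right\rangle_{L_2}=\int_M b|\WW^{+}|u^2\,d\mu\geq 0$, the middle strict inequality by citing Aubin--Schoen (with the non-locally-conformally-flat hypothesis excluding the equality case), and the last equality from $\WW^{+}\equiv 0$ on the round sphere so that $F=\SS$ and the two functionals coincide.
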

\begin{proof}
The first inequality follows from the definition and the following observation. Given a metric g, a positive function u and $b\geq 0$, then
\[\left\langle{u, Lu}\right\rangle_{L_{2}}-\left\langle{u, L_{1,b}u}\right\rangle_{L_{2}}=\int_{M}b|\WW^{+}|u^2 d\mu\geq 0.
\]
The second inequality is a result of T. Aubin \cite{aubin76} and R. Schoen \cite{schoen84}. The last inequality is an immediate consequence of the fact that the standard metric on $S^n$ is locally conformally flat ($\WW=0$). \end{proof}

On a complete gradient shrinking soliton, the scalar curvature is positive unless the soliton is isometric to the flat Euclidean space \cite{prs09}. Therefore, if the GRS is not flat then the existence of a solution to the Yamabe problem \cite{lp87} implies that $Y_{g}>0$. This observation is essential because of the following result.

\begin{proposition} 
\label{gursky1}
 Let $(M,g)$ be a closed four-dimensional Riemannian manifold. If $Y(M)>0$ and $\hat{Y}(M)\leq 0$, then there is a smooth metric $\tilde{g}=u^2 g$ such that 
\begin{equation}
\label{guu2}
\int _{M}\widetilde{\SS}^2 d\widetilde{\mu}\leq 216 \int_{M} |\widetilde{\WW}^{+}|^2 d\widetilde{\mu}. 
\end{equation}
Furthermore,  the equality holds  only if $\hat{Y}(M)=0$ and $\widetilde{\SS}=6\sqrt{6}|\widetilde{\WW}|$ .
\end{proposition}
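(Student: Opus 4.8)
I would follow the argument of \cite{g00}, with the conformal Laplacian $L$ replaced by the modified operator $L_{1,6\sqrt{6}}=-6\Delta_{g}+F$, $F=\SS-6\sqrt{6}\,|\WW^{+}|$: first produce a conformal metric in which $F$ is a non-positive \emph{constant}, then combine the resulting pointwise bound $\widetilde{\SS}\le 6\sqrt{6}\,|\widetilde{\WW}^{+}|$ with the positivity of the Yamabe invariant.

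\textit{Producing the conformal metric.} First note that $\hat{Y}(M)$ is finite: by the Sobolev inequality the functional $(\hat{Y}_{1,6\sqrt{6}})_{g}$ is bounded below. By Lemma \ref{modelS} — or simply from the hypothesis $\hat{Y}(M)\le 0$ — one has the strict inequality $\hat{Y}(M)<Y(S^{4},[g_{\text{sd}}])=\hat{Y}(S^{4},[g_{\text{sd}}])$. Since $F$ transforms conformally exactly like the scalar curvature, $L_{1,6\sqrt{6}}$ plays the role of the conformal Laplacian for the associated Yamabe-type variational problem; because the infimum $\hat{Y}(M)$ lies strictly below the round-sphere value, there is no loss of compactness in a minimizing sequence, so $(\hat{Y}_{1,6\sqrt{6}})_{g}$ attains its infimum at some $u$, and standard elliptic regularity together with the strong maximum principle give $u\in C^{\infty}$, $u>0$. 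Normalizing $\|u\|_{L_{4}(g)}=1$ and setting $\widetilde{g}=u^{2}g$, the Euler--Lagrange equation $L_{1,6\sqrt{6}}u=\hat{Y}(M)u^{3}$ says precisely that
\[
\widetilde{F}:=\widetilde{\SS}-6\sqrt{6}\,|\widetilde{\WW}^{+}|\equiv \hat{Y}(M)\le 0
\]
is a non-positive constant on $M$.

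\textit{Using $Y(M)>0$ and concluding.} Since the Yamabe invariant is a conformal invariant, $Y(M,[\widetilde{g}])=Y(M)>0$, so evaluating the Yamabe quotient of $\widetilde{g}$ on the constant function $1$ gives $\int_{M}\widetilde{\SS}\,d\widetilde{\mu}\ge Y(M)(\int_{M}d\widetilde{\mu})^{1/2}>0$. Now write $\widetilde{\SS}=6\sqrt{6}\,|\widetilde{\WW}^{+}|+\hat{Y}(M)$, square it, integrate over $(M,\widetilde{g})$, and use $6\sqrt{6}\int_{M}|\widetilde{\WW}^{+}|\,d\widetilde{\mu}=\int_{M}\widetilde{\SS}\,d\widetilde{\mu}-\hat{Y}(M)\int_{M}d\widetilde{\mu}$; after simplification (note $(6\sqrt{6})^{2}=216$) this yields
\[
\int_{M}\widetilde{\SS}^{2}\,d\widetilde{\mu}-216\int_{M}|\widetilde{\WW}^{+}|^{2}\,d\widetilde{\mu}=\hat{Y}(M)\Big(2\int_{M}\widetilde{\SS}\,d\widetilde{\mu}-\hat{Y}(M)\int_{M}d\widetilde{\mu}\Big).
\]
The parenthesis is strictly positive by the previous line while $\hat{Y}(M)\le 0$, so the right-hand side is $\le 0$, which is (\ref{guu2}). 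If equality holds, then $\hat{Y}(M)=0$, and the Euler--Lagrange equation becomes $L_{1,6\sqrt{6}}u=0$, i.e.\ $\widetilde{F}\equiv 0$, so $\widetilde{\SS}=6\sqrt{6}\,|\widetilde{\WW}^{+}|$ identically.

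\textit{Main obstacle.} The delicate point is the first step: one must know that the variational problem for $L_{1,6\sqrt{6}}$ is solved by a smooth positive function. This is not a formal corollary of the classical Yamabe theorem, since $|\WW^{+}|$ is a genuine zeroth-order term (and only Lipschitz in general), but it goes through exactly as for the Yamabe problem — the concentration threshold is still the best Sobolev constant $Y(S^{4})$, the strict inequality $\hat{Y}(M)<Y(S^{4})$ rules out bubbling, and regularity and positivity of the minimizer follow from standard elliptic theory. Everything after that is elementary algebra together with the conformal invariance of $Y$.
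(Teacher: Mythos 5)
Your proof is correct and takes essentially the same route as the paper: solve the modified Yamabe-type problem for $L_{1,6\sqrt{6}}$ (possible since $\hat{Y}(M)\leq 0<Y(S^4)=\hat{Y}(S^4)$ keeps the infimum below the concentration threshold), obtain a conformal metric with $\widetilde{\SS}-6\sqrt{6}|\widetilde{\WW}^{+}|\equiv\hat{Y}(M)$, and then exploit $Y(M)>0$ together with $\hat{Y}(M)\leq 0$. The only difference is cosmetic: where the paper applies Cauchy--Schwarz to $\int_{M}6\sqrt{6}|\widetilde{\WW}^{+}|\widetilde{\SS}\,d\widetilde{\mu}$, you expand the square exactly, which gives the same inequality and a slightly cleaner treatment of the equality case.
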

\begin{proof}
The proof is almost identical to \cite[Prop 3.5]{g00}. Thus, we provide a brief argument here. 
Through a conformal transformation, the Yamabe problem can be solved via variational approach for an appropriate eigenvalue PDE problem. In particular, the existence of  solution under the assumption $Y(M)<Y(S^n)$ depends solely on the analysis of  regularity of the Laplacian operator (but not on the reaction term) \cite[Theorem 4.5]{lp87}. 
 
In our case, $F$ conformally transforms as scalar curvature and Lemma \ref{modelS} holds, then there exists a minimizer $v$ for $\hat{Y}_{g}[.]$, such that under normalization $||v||_{L_4}=1$, the metric $\tilde{g}=v^2 g$ satisfies $\widetilde{F}=\widetilde{\SS}-6\sqrt{6}|\widetilde{\WW}^{+}|=\hat{Y}(M).$ Applying $Y(M)>0$ and $\hat{Y}(M)\leq 0$ we obtain,
\begin{align*}
\int_{M}\widetilde{\SS}^{2} d\tilde{\mu}&=\int_{M}6\sqrt{6}|\widetilde{\WW}^{+}|\widetilde{\SS}d\tilde{\mu}+\hat{Y}(M)\int_{M}\widetilde{\SS}d\tilde{\mu}\\
&\leq \int_{M}6\sqrt{6}|\widetilde{\WW}^{+}|\widetilde{\SS}d\tilde{\mu}\\
&\leq 6\sqrt{6}(\int_{M}|\widetilde{\WW}^{+}|^2 d\tilde{\mu})^{1/2} (\int_{M}|\widetilde{\SS}|^2 d\tilde{\mu})^{1/2}.
\end{align*}
Therefore, $\int_{M}\widetilde{\SS}^{2} d\tilde{\mu}\leq 216 \int_{M}|\widetilde{\WW}^{+}|^2 d\tilde{\mu}$. The equality case is attained if only if  $\tilde{g}$ attains the infimum,  $\hat{Y}(M)=0$ and $\widetilde{\SS}=6\sqrt{6}|\widetilde{\WW}|$. 
\end{proof}

\begin{proposition} 
\label{neghatY}
Let $(M, g,f,\lambda)$ be a closed four-dimensional shrinking GRS satisfying (\ref{gapWa}) and $\WW^{+}\neq 0$, then $\hat{Y}(M)\leq 0$. Moreover,  equality holds only if   $\WW^{+}$ has the form $\omega\text{diag}(-1,-1, 2)$ for some $\omega\geq 0$ at each point.
\end{proposition}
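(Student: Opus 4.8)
The plan is to feed the Bochner--Weitzenb\"ock formula of Theorem \ref{WSoliton} into the variational characterization of $\hat Y(M)$ by using the potential $u=(4\pi\tau)^{-n/2}e^{-f}$, or more precisely a power of it, as a test function. Recall from the discussion preceding the definition of $\hat Y_{a,b}$ that for the pair $a=1$, $b=6\sqrt6$ one has
\[
\int_M \widetilde F\, d\widetilde\mu = \int_M\big(F\,v^2 + 6|\nabla v|^2\big)\,d\mu
\]
for $\widetilde g = v^2 g$, so to show $\hat Y(M)\le 0$ it suffices to exhibit a single positive function $v$ with $\int_M\big((\SS - 6\sqrt6\,|\WW^+|)v^2 + 6|\nabla v|^2\big)\,d\mu \le 0$. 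The natural candidate is $v = |\WW^+|^{1/2}$ (smoothed where $\WW^+$ vanishes, handled by the usual approximation argument since on a soliton $\WW^+$ is real-analytic by Bando's regularity theorem, so either $\WW^+\equiv0$ — excluded — or its zero set is negligible). With this choice $|\nabla v|^2 = \tfrac14 |\WW^+|^{-1}|\nabla|\WW^+||^2$ and $v^2 = |\WW^+|$, so the functional becomes
\[
\int_M \SS|\WW^+|\,d\mu - 6\sqrt6\int_M |\WW^+|^2\,d\mu + \tfrac{3}{2}\int_M \frac{|\nabla|\WW^+||^2}{|\WW^+|}\,d\mu .
\]

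The key step is then to control $\int_M |\WW^+|^2$ from below by the other two terms using \eqref{Wsolitoneqn}. Integrating the weighted formula \eqref{Wsolitoneqn} against $d\mu$ and using $\int_M \Delta_f|\WW^+|^2\,e^{-f}d\mu=0$ (the weighted Laplacian is self-adjoint with respect to $e^{-f}d\mu$, and on a closed manifold there is no boundary term — one works throughout with the weighted measure, which only rescales the Yamabe-type functional by the fixed positive factor $e^{-f}$, or alternatively one integrates the unweighted identity $\Delta|\WW^+|^2 = \Delta_f|\WW^+|^2 + \langle\nabla f,\nabla|\WW^+|^2\rangle$ and absorbs the drift term), one gets an identity of the shape
\[
\int_M\Big(2|\nabla\WW^+|^2 + 4\lambda|\WW^+|^2 - 36\det\WW^+ - \langle\He f\circ\He f,\WW^+\rangle\Big)\,d\mu = 0 .
\]
Now invoke the pointwise algebraic inequalities available in dimension four: Kato's inequality $|\nabla\WW^+|^2\ge |\nabla|\WW^+||^2$, and the determinant bound $\det\WW^+\le \tfrac{1}{3\sqrt6}|\WW^+|^3$ (equality exactly when $\WW^+ \sim \mathrm{diag}(-1,-1,2)$, which is where the rigidity statement in the Proposition will come from). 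Using hypothesis \eqref{gapWa} to replace $\int\langle\He f\circ\He f,\WW^+\rangle$ by $\tfrac23\int \SS|\WW^+|^2$, and writing $4\lambda = \tfrac{2}{3}(\SS + 3\Delta f)/\cdots$ — more cleanly, using $\SS+\Delta f = 4\lambda$ so $4\lambda \ge$ terms one can compare — one should arrive at
\[
\int_M\Big(2|\nabla|\WW^+||^2 + \tfrac{4}{3}\SS|\WW^+|^2 - \tfrac{36}{3\sqrt6}|\WW^+|^3\Big)\,d\mu \le 0 ,
\]
i.e. $\int 2|\nabla|\WW^+||^2 + \tfrac43\int \SS|\WW^+|^2 \le 2\sqrt6\int|\WW^+|^3$. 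Dividing through by $|\WW^+|$ pointwise is not legitimate, so instead I substitute $v=|\WW^+|^{1/2}$ directly: the displayed inequality is exactly $\tfrac{8}{3}\int|\nabla v|^2 \cdot(\text{const}) + \tfrac43\int\SS v^4 \le 2\sqrt6\int v^6$ after matching $|\nabla|\WW^+||^2 = 4v^2|\nabla v|^2$, and one checks the numerical constants line up so that this forces $\int\big(\SS v^2 - 6\sqrt6\, v^6/v^4\cdots\big)$ — the bookkeeping here is the routine part — to give $\int\big(F v^2 + 6|\nabla v|^2\big)\le 0$, hence $\hat Y(M)\le 0$.

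The main obstacle I anticipate is getting the numerical constants to close: the Kato inequality used here is the \emph{un-improved} one (the paper explicitly flags "the absence of an improved Kato's inequality"), so there is no slack, and one must check that the coefficient $6$ on $\int|\nabla v|^2$ in the conformal Laplacian, the factor $\tfrac32$ coming from $v=|\WW^+|^{1/2}$, the $36$ on $\det\WW^+$, and the sharp constant $\tfrac{1}{3\sqrt6}$ in $\det\WW^+\le c|\WW^+|^3$ combine to give precisely the threshold in $F = \SS - 6\sqrt6|\WW^+|$ and not something weaker — this is presumably why the constant $6\sqrt6$ was chosen in the definition of $F$. For the equality discussion: if $\hat Y(M)=0$ then every inequality used must be an equality a.e., in particular $\det\WW^+ = \tfrac{1}{3\sqrt6}|\WW^+|^3$ pointwise, which by the linear-algebra fact (a traceless symmetric $3\times3$ matrix with $\det = \tfrac{1}{3\sqrt6}|\cdot|^3$ has eigenvalues proportional to $(-1,-1,2)$) forces $\WW^+ = \omega\,\mathrm{diag}(-1,-1,2)$ with $\omega\ge0$ at each point, giving the stated rigidity. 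I would also need to record that the approximation/smoothing near $\{\WW^+=0\}$ does not affect any of these integral identities, using analyticity of $|\WW^+|^2$ on the soliton to bound the bad set.
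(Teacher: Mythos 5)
Your overall strategy---integrate the Bochner--Weitzenb\"ock formula of Theorem \ref{WSoliton}, apply Kato's inequality and the determinant bound, and use the result to exhibit a test function making the $\hat Y$ functional nonpositive---is indeed the paper's strategy, but two concrete errors keep your version from closing. The first is the choice of test function. The correctly integrated Bochner inequality reads $0 \ge \int_M\bigl(2|\nabla|\WW^{+}||^2 + \tfrac{\SS}{3}|\WW^{+}|^2 - 2\sqrt{6}\,|\WW^{+}|^3\bigr)\,d\mu$, whose terms are quadratic and cubic in $|\WW^{+}|$; the quantity you need for $\hat Y(M)\le 0$ with $v=|\WW^{+}|^{1/2}$ is $\int_M\bigl(\SS|\WW^{+}| - 6\sqrt{6}\,|\WW^{+}|^2 + \tfrac{3}{2}|\nabla|\WW^{+}||^2/|\WW^{+}|\bigr)\,d\mu$, whose terms are one power of $|\WW^{+}|$ lower. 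The two expressions differ by the nonconstant factor $|\WW^{+}|$, so no adjustment of numerical constants can make the first imply the second: the ``routine bookkeeping'' you defer is exactly where the argument breaks. The fix is to take $u=|\WW^{+}|$ itself as the test function; then $F u^2 + 6|\nabla u|^2 = \SS|\WW^{+}|^2 - 6\sqrt{6}\,|\WW^{+}|^3 + 6|\nabla|\WW^{+}||^2$ is precisely three times the integrand above, the constants match exactly, and $\hat Y(M)\le 0$ follows (your smoothing near the zero set of $\WW^{+}$ via analyticity is fine and is what the paper does).

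The second problem is the integration step itself. The unweighted integral $\int_M \Delta_f|\WW^{+}|^2\,d\mu$ is not zero; integrating the drift term by parts gives $\int_M \Delta_f|\WW^{+}|^2\,d\mu = \int_M \Delta f\,|\WW^{+}|^2\,d\mu$, and this term is essential: combined with $4\lambda = \SS + \Delta f$ and hypothesis (\ref{gapWa}), it is exactly what cancels $\Delta f$ and leaves the coefficient $\tfrac{\SS}{3}$ on $|\WW^{+}|^2$ (your $\tfrac{4}{3}\SS$ is not what comes out). If instead you integrate against $e^{-f}d\mu$ so that the weighted Laplacian term vanishes, you can no longer invoke (\ref{gapWa}) nor feed the result into the Yamabe-type functional, since both are unweighted; $e^{-f}$ is a nonconstant positive function, not a ``fixed positive factor,'' so it does not merely rescale the functional. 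Your equality analysis (equality in the determinant bound forcing $\WW^{+}=\omega\,\mathrm{diag}(-1,-1,2)$ with $\omega\ge 0$) is the right endgame, but only once the inequality is established with the correct test function.
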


\begin{proof}
By Theorem \ref{WSoliton}, we have
\[\Delta_{f} |\WW^{+}|^2=2|\nabla \WW^{+}|^2+4\lambda |\WW^{+}|^2-36\text{det}_{\Lambda^2_{+}}\WW^{+} -\left\langle{\Rc\circ \Rc,\WW^{+}}\right\rangle.\]
Integrating both sides and applying  (\ref{gapWa}) yield
\[
\int_{M}\Delta_{f} |\WW^{+}|^2 d\mu \geq \int_{M}\Big[2|\nabla \WW^{+}|^2+(\frac{\SS}{3}+\Delta {f}) |\WW^{+}|^2-36\text{det}_{\Lambda^2_{+}}\WW^{+}\Big].
\]
Via integration by parts, we have
\[\int_{M} \nabla{f} (|\WW^{+}|^2) d\mu=\int_{M}\left\langle{\nabla{f}, \nabla{|\WW^{+}|^2}}\right\rangle d\mu=-\int_{M} \Delta{f}|\WW^{+}|^2 d\mu.\]
Therefore, we arrive at 
\[0\geq \int_M \Big(2|\nabla \WW^{+}|^2+\frac{\SS}{3}|\WW^{+}|^2-36\text{det}_{\Lambda^2_{+}}\WW^{+}\Big).\]
We also have the following pointwise estimates, 
\begin{align*}
|\nabla \WW^{+}|^2 &\geq |\nabla |\WW^{+}||^2,\\
-18\text{det}\WW^{+}&\geq -\sqrt{6}|\WW^{+}|^3.
\end{align*}
The first one is the classical Kato\rq{}s inequality while the second one is purely algebraic.
Thus, for $u=|\WW^{+}|$,  
\[ \int_{M} (\frac{1}{3}Fu^2+2|\nabla u|^2) d\mu \leq 0. \]
Hence if $|\widetilde{\WW^{+}}|>0$ everywhere then the statement follows. \\

If $|\widetilde{\WW^{+}}|=0$ somewhere, let $M_{\epsilon}$ be the set of points at which $|\widetilde{\WW^{+}}|<\epsilon$. By the analyticity of a closed GRS \cite{bando87}, $\text{Vol}(M_{\epsilon})\rightarrow 0$ as $\epsilon\rightarrow 0$. Let $\eta_{\epsilon}:[0,\infty)\rightarrow [0, \infty)$ be a $C^{2}$ positive function which is $\epsilon/2$ on $[0,\epsilon/2]$, identity on $[\epsilon, \infty)$ and $0\leq \eta_{\epsilon}\rq{}\leq 10$. If $u_{\epsilon}=\eta_{\epsilon} \circ u$, then $u_{\epsilon}$ is $C^{2}$ and positive. In addition, we have,
\begin{align*}
\int_{M}Fu_{\epsilon}^2 d\mu &\leq \int_{M-M_{\epsilon}}Fu^2 d\mu+C\epsilon^2 \text{Vol}(M_{\epsilon}),\\
\int_{M}|\nabla u_{\epsilon}|^2 d\mu &=\int_{M} |\eta_{\epsilon}\rq{} \nabla u|^2 d\mu \leq \int_{M-M_{\epsilon}}|\nabla u|^2 d\mu+C \text{Vol}(M_{\epsilon}),
\end{align*} 
where C is a constant depending on the metric. Therefore,  we have,
\[\inf_{\epsilon>0} \{\int_{M}(Fu_{\epsilon}^2+6|\nabla u_{\epsilon}|^2) d\mu\}\leq 0.\]  
Consequently, $\hat{Y}(M)\leq 0$. \\

Now, equality holds only if $\int_{M} (\frac{1}{3}Fu^2+2|\nabla u|^2) d\mu = 0$ and the equality happens in each point-wise estimate above. The result then follows.


\end{proof}

We are now ready to prove the main result of this section.
\begin{proof} ({\bf Theorem \ref{gapW}})

By Proposition \ref{neghatY}, we have $\hat{Y}(g)\leq 0$ and $Y(M)>0$. Otherwise $\SS=0$ and the GRS is flat by \cite{prs09}, which is a contradiction to $\WW^{+}\neq 0$.  
Therefore, following Proposition \ref{gursky1},  there is a conformal transformation $\tilde{g}=u^2 g$ with 
\begin{equation}
\label{gapW2}
\int_{M} \widetilde{\SS}^2 d\widetilde{\mu}\leq 216\int_{M} |\widetilde{\WW}^{+}|^2 d\widetilde{\mu}.
\end{equation}
According to (\ref{Euler}) and (\ref{signature}), 
\begin{align}
2\pi^2(2\chi(M)+3\tau(M))=&\int_{M}|\widetilde{\WW}^{+}|^2 d\widetilde{\mu}-\frac{1}{4}\int_{M}|\widetilde{\EE}|^2 d\widetilde{\mu}+\frac{1}{48}\int_{M}\widetilde{\SS}^2 d\widetilde{\mu}\nonumber \\
\label{gapW3}
&\leq \int_{M}|\widetilde{\WW}^{+}|^2 d\widetilde{\mu}+\frac{1}{48}\int_{M}\widetilde{\SS}^2 d\widetilde{\mu}\\
&\leq (1+\frac{9}{2})\int_{M}|\widetilde{\WW}^{+}|^2 d\widetilde{\mu}\nonumber.
\end{align}
Here we used (\ref{gapW2}) in the last step. Since $||\WW^{+}||_{L_2}$ is conformally invariant, (\ref{gapW1}) then follows.\\ 

Now the equality holds only if all equalities hold in (\ref{gapW3}), (\ref{gapW2}) and (\ref{gapWa}). The first one implies that $\tilde{g}$ is Einstein. Therefore, by \cite[Theorem 1]{gl99}, inequality (\ref{gapW2}) is strict unless $\SS\equiv 0$. But this is a contradiction to  $Y(M)>0$. Thus the inequality is strict.

\end{proof}

\subsection{Isotropic Curvature}

Another application is the following inequality which is an improvement of \cite[Prop 2.6]{xu12}.
\begin{proposition} Let $(M, g, f, \lambda)$ be a four-dimensional GRS, then we have
\begin{equation}
\Delta_{f}u\leq (2\lambda+\frac{3}{2}u-\SS)u-\frac{1}{4}|\Rc|^2
\end{equation}
in the distribution sense where $u(x)$ is the smallest eigenvalue of $\frac{S}{3}-2\WW_{\pm}$.
\end{proposition}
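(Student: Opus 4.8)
The plan is to derive the differential inequality for the smallest eigenvalue $u$ of the symmetric operator $T:=\frac{\SS}{3}\mathrm{Id}-2\WW^{\pm}$ acting on $\Lambda_2^{\pm}$, by combining the evolution of $\SS$ with the Bochner-type formula for $\WW^{\pm}$ obtained in Lemma \ref{generalw} (its dimension-four specialization from the proof of Theorem \ref{WSoliton}). First I would write down $\Delta_f T$ as an operator on $\Lambda_2^{\pm}$: from $\Delta_f \SS + 2|\Rc|^2 = 2\lambda\SS$ (equation \eqref{lapS}, rewritten with $\Delta_f$), one gets $\Delta_f(\tfrac{\SS}{3}\mathrm{Id}) = (\tfrac{2\lambda\SS}{3} - \tfrac{2}{3}|\Rc|^2)\mathrm{Id}$; and from the pointwise identities \eqref{mt12}, \eqref{mt13}, \eqref{mt14} for $\Delta_f\WW^{+}(\alpha_i,\alpha_i)$ in the normal-form frame, one gets $\Delta_f\WW^{\pm}$ as a quadratic expression in the eigenvalues $\lambda_i$ of $\WW^{\pm}$ plus a $\Rc\circ\Rc$ term and a $(|\Rc|^2-\SS^2)$ term. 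Assembling these yields $\Delta_f T$ in closed form.

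The second step is the standard eigenvalue argument: for the smallest eigenvalue $u$ of a smooth family of symmetric operators satisfying a reaction–diffusion inequality, one has in the barrier/distribution sense
\begin{equation*}
\Delta_f u \le \langle (\Delta_f T)\,\phi, \phi\rangle
\end{equation*}
where $\phi$ is a unit eigenvector realizing $u$ at the point in question (this is the usual trick, cf. Hamilton's maximum principle for tensors — the second-derivative terms of the eigenvector contribute with the right sign, and $|\nabla T|^2$-type terms are discarded since we only want an upper bound). So it remains to bound $\langle(\Delta_f T)\phi,\phi\rangle$ from above by $(2\lambda + \tfrac32 u - \SS)u - \tfrac14|\Rc|^2$. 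Evaluating in the Berger normal-form frame where $\phi=\alpha_1$ and $u = \tfrac{\SS}{3} - 2\lambda_1$ (so $\lambda_1 = \tfrac{\SS}{6} - \tfrac{u}{2}$), the Weyl contribution $-2\,\Delta_f\WW^{+}(\alpha_1,\alpha_1)$ produces $-4\lambda\lambda_1 + 4\lambda_1^2 + 8\lambda_2\lambda_3 + \tfrac16(|\Rc|^2-\SS^2) + \Rc\circ\Rc(\alpha_1,\alpha_1)$, and adding the scalar part $\tfrac{2\lambda\SS}{3} - \tfrac23|\Rc|^2$ one reorganizes everything in terms of $u$, $\SS$, $\lambda_2\lambda_3$ and the $\Rc\circ\Rc$ term.

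The main obstacle is the algebraic bookkeeping: one must show that $8\lambda_2\lambda_3 - \tfrac16\SS^2 + (\text{terms from }\Rc\circ\Rc \text{ and } |\Rc|^2) \le -\tfrac14|\Rc|^2 + (\text{lower-order in }u)$. Here two inputs do the work: (i) since $\lambda_1\le\lambda_2\le\lambda_3$ and $\lambda_1+\lambda_2+\lambda_3=0$, one has $\lambda_2\lambda_3 \le \lambda_1^2$ — actually $\lambda_2\lambda_3 \ge -\tfrac14(\lambda_2+\lambda_3)^2 = -\tfrac14\lambda_1^2$ and the relevant bound $2\lambda_2\lambda_3 \le \lambda_1^2$ (equivalently $\lambda_2^2+\lambda_3^2\ge 2\lambda_2\lambda_3$ combined with $\lambda_2+\lambda_3=-\lambda_1$), giving $8\lambda_2\lambda_3 \le 4\lambda_1^2$; translating via $\lambda_1=\tfrac{\SS}{6}-\tfrac{u}{2}$ contributes the $\tfrac32 u$-type and $\SS$-type terms; and (ii) the Ricci terms $\Rc\circ\Rc(\alpha_1,\alpha_1) - \tfrac12|\Rc|^2$ must be controlled — here one uses that $\Rc\circ\Rc(\alpha_1,\alpha_1) = 2(\Rc_{11}\Rc_{22} - \Rc_{12}^2) + \cdots \le$ (something)$\,|\Rc|^2$ by Cauchy–Schwarz on the $2\times 2$ blocks, so that the total Ricci coefficient comes out $\le -\tfrac14$. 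I expect the proof to mirror \cite[Prop 2.6]{xu12} closely, with the soliton equation $\SS+\Delta f = 4\lambda$ and Remark \ref{HW0} (i.e. $\langle H,\WW\rangle = 0$) used exactly as in the proof of Theorem \ref{WSoliton} to pass between the $\Rc\circ\Rc$ and $\He f\circ\He f$ forms; the improvement over \cite{xu12} presumably comes from retaining the full $-\tfrac14|\Rc|^2$ rather than discarding it.
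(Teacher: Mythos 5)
Your proposal follows the paper's own route step for step: the barrier function $X_{1234}=\tfrac{\SS}{3}-2\WW^{+}(\alpha_1,\alpha_1)$ built from the Berger normal form \eqref{normalform} with a parallel frame, the identity $\Delta_f\SS=2\lambda\SS-2|\Rc|^2$ from \eqref{lapS}, the pointwise formulas \eqref{mt12}--\eqref{mt14} from the proof of Theorem \ref{WSoliton}, an algebraic bound on $\lambda_2\lambda_3$, a Cauchy--Schwarz bound on the Ricci term, and Calabi's barrier sense to pass to $u$. The problem is that the two estimates you defer to ``bookkeeping'' are exactly where the proof lives, and as stated they do not close. First, a repairable slip: since $u=\tfrac{\SS}{3}-2\lambda_1$ is the smallest eigenvalue of $\tfrac{\SS}{3}-2\WW^{+}$, your $\lambda_1$ is the \emph{largest} eigenvalue of $\WW^{+}$ (not the smallest as your ordering suggests), and the inequality actually needed is $8\lambda_2\lambda_3\le 2\lambda_1^2$, which follows from $4\lambda_2\lambda_3\le(\lambda_2+\lambda_3)^2=\lambda_1^2$ -- precisely the AM--GM argument in your parenthesis. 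With it, $4\lambda_1^2+8\lambda_2\lambda_3\le 6\lambda_1^2=\tfrac{3}{2}\bigl(\tfrac{\SS}{3}-u\bigr)^2$, whose $\SS^2$-part cancels the $-\tfrac16\SS^2$ and yields $\tfrac32u^2-\SS u$, as in the paper. The constant you actually state, $8\lambda_2\lambda_3\le 4\lambda_1^2$, is true but too weak: it leaves $2u^2$, a coefficient $-\tfrac{4\SS}{3}u$, and an uncancelled $+\tfrac{\SS^2}{18}$, so the claimed form of the inequality is not reached.

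The genuine gap is in your step (ii). Your factor accounting is the correct one: by \eqref{mt12} (and by consistency with \eqref{Wsolitoneqn}), $-2\Delta_f\WW^{+}(\alpha_1,\alpha_1)$ contributes $+(\Rc\circ\Rc)(\alpha_1,\alpha_1)=2T_1$, so the total Ricci contribution to $\Delta_f X_{1234}$ is $-\tfrac12|\Rc|^2+(\Rc\circ\Rc)(\alpha_1,\alpha_1)$, and to land on $-\tfrac14|\Rc|^2$ you would need $(\Rc\circ\Rc)(\alpha_1,\alpha_1)\le\tfrac14|\Rc|^2$. But Cauchy--Schwarz on the blocks only gives $(\Rc\circ\Rc)(\alpha_1,\alpha_1)=\Rc_{11}\Rc_{22}+\Rc_{33}\Rc_{44}+2\Rc_{13}\Rc_{24}-2\Rc_{14}\Rc_{23}-\Rc_{12}^2-\Rc_{34}^2\le\tfrac12|\Rc|^2$, and this is sharp (e.g.\ $\Rc=\mathrm{diag}(1,1,0,0)$ in the normal-form frame; nothing in \eqref{normalform} constrains $\Rc$), so your announced conclusion ``the total Ricci coefficient comes out $\le-\tfrac14$'' cannot be obtained this way -- you only get $\le 0$, i.e.\ $\Delta_f u\le(2\lambda+\tfrac32u-\SS)u$ without the $-\tfrac14|\Rc|^2$. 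Note that the paper's proof records the quadratic Ricci term in $\Delta_f X_{1234}$ as $+T_1=\tfrac12(\Rc\circ\Rc)(\alpha_1,\alpha_1)$ and then applies $T_1\le\tfrac14|\Rc|^2$; your coefficient is twice that, and it is the value forced by \eqref{mt12}. So the missing factor of two is exactly where the $-\tfrac14|\Rc|^2$ in the statement hinges, and your proposal supplies no estimate (nor any soliton-specific input) that recovers it; as written, the argument proves only the weaker inequality.
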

\begin{proof}
Let $X_{1234}=\frac{S}{3}-2\WW(e_{12}+e_{34},e_{12}+e_{34})$ for any 4--orthonormal basis.
We  use the normal form discussed in (\ref{normalform}) and obtain a local frame by parallel translation along geodesic lines. 
We denote $\{\alpha_{i}\}_{i=1}^{3}$ the basis of $\Lambda_{2}^{+}$ as in (\ref{Hodgebase}) with corresponding eigenvalues $\lambda_{i}=a_{i}+b_{i}$. Without loss of generality, we can assume  $a_{1}+b_{1}\geq a_{2}+b_{2}\geq a_{3}+b_{3}$ and thus  $u(x)=X_{1234}(x)$. Using Lemma \ref{generalw}, we compute
\begin{align*}
\Delta_{f}\WW_{1212}=&2\lambda a_{1}-2(a_{1}^2+b_{1}^2+2a_{2}a_{3}+2b_{2}b_{3})\\
&-\frac{1}{2}\sum_{p}(\Rc_{1p}^2+\Rc_{2p}^2)+\frac{S}{2}(\Rc_{11}+\Rc_{12})\\
&-(\Rc_{11}\RR_{22}-\Rc_{12}^2)-\frac{1}{6}(S^2-|\Rc|^2),\\
\Delta_{f}\WW_{1234}=&2\lambda b_{1}-4(a_{1}b_{1}+a_{2}b_{3}+a_{3}b_{2})+(\Rc_{13}\Rc_{24}-\Rc_{23}\Rc_{14}).
\end{align*}
Let us recall that, $\Delta_{f}\SS=2\lambda \SS-2|\Rc|^2$. Thus, for $2T_{1}= (\Rc\circ \Rc)(\alpha_{1},\alpha_{1})$, we have 
\begin{align*}
\Delta_{f}(X_{1234})=&2\lambda \frac{\SS}{3}-\frac{2}{3}|\Rc|^2-4\lambda (a_{1}+b_{1})+4\lambda_{1}^2+8\lambda_{2}\lambda_{3}+\frac{1}{6}(|\Rc|^2-\SS^2)+T_{1} \\
=&2\lambda X_{1234}-\frac{1}{2}|\Rc|^2+4\lambda_{1}^2+8\lambda_{2}\lambda_{3}-\frac{1}{6}\SS^2+T_{1}.
\end{align*}
Next we observe that $\lambda_{2}+\lambda_{3}=-\lambda_{1}$ and $8\lambda_{2}\lambda_{3}\leq 2\lambda_{1}^2$. By Cauchy-Schwartz inequality, $T_{1}\leq \frac{1}{4}|\Rc|^2$. Therefore,
\begin{align*}
\Delta_{f}(X_{1234})\leq &2\lambda X_{1234}-\frac{1}{4}|\Rc|^2+6(\frac{\frac{\SS}{3}-X_{1234}}{2})^2-\frac{1}{6}\SS^2\\
\leq &2\lambda X_{1234}+\frac{3}{2}X_{1234}^2-\SS X_{1234}-\frac{1}{4}|\Rc|^2=u(2\lambda+\frac{3}{2}u-\SS)-\frac{1}{4}|\Rc|^2. 
\end{align*}
Since $\Delta_{f}u\leq \Delta_f (X_{1234})$ in the barrier sense of E. Calabi (see\cite{calabi58}), the result then follows.
\end{proof}

\section{\textbf{A Framework Approach}}

In this section, we shall propose a framework to study interactions between components of curvature operator and the potential function on a GRS $(M,g,f,\lambda)$. In particular, we represent the divergence and the interior product $i_{\nabla{f}}$ on each curvature component as linear combinations of four operators $P, Q, M, N$. The geometry of these operators, in turn, gives us information about the original objects. It should be noted that some identities here have already appeared elsewhere.\\

Let $(M^{n},g)$ be an n-dimensional oriented Riemannian manifold. Using the point-wise induced inner product, any anti-symmetric (2,0) tensor $\alpha$ (a two-form) can be seen as an operator on the tangent space by
\begin{equation*}
\alpha(X,Y)=\left\langle{-\alpha(X), Y}\right\rangle=\left\langle{X,\alpha(Y)}\right\rangle=
\left\langle{\alpha, X\wedge Y}\right\rangle.
\end{equation*}
In particular, a bi-vector acts on a vector $X$ as follows
\[(U\wedge V)X=\left\langle{V,X}\right\rangle U-\left\langle{U,X}\right\rangle V.\]

For instance, in dimension four,  the complete description is given by the table below.
\begin{equation}
\label{bivectoract}
\begin{tabular}
{ | l | c | c | c | c | c | r |}
  \hline                       
  \text{} & $e_{12}+e_{34}$ & $e_{13}-e_{24}$ & $e_{14}+e_{23}$ & $e_{12}-e_{34}$ & $e_{13}+e_{24}$ & $e_{14}-e_{23}$\\ \hline
  $e_1$ & $-e_2$ & $-e_3$ & $-e_4$ & $-e_2$ & $-e_3$ & $-e_4$ \\ \hline
  $e_2$ & $e_1 $& $e_4 $& $-e_3 $& $e_1 $&$ -e_4 $&$ e_3$ \\ \hline
  $e_3$ & $-e_4 $& $e_1 $&$ e_2 $&$ e_4 $&$ e_1 $& $ -e_2$ \\ \hline
  $e_4 $& $e_3 $& $-e_2 $&$ e_1 $&$ -e_3 $&$ e_2 $&$ e_1 $\\
  \hline  
\end{tabular} 
\end{equation}

In a similar manner, any symmetric $(2,0)$ tensor b can be seen as an operator on the tangent space,
\begin{equation*}
b(X,Y)=\left\langle{b(X), Y}\right\rangle=\left\langle{X,b(Y)}\right\rangle=
\left\langle{b, X\wedge Y}\right\rangle.
\end{equation*}
Consequently, when b is viewed as a 1-form valued 1-form, $d_{\nabla}b$ denotes the exterior derivative (a 1-form valued 2-form). That is,
\begin{equation*}
(d_{\nabla}b)(X,Y,Z)=(\nabla b)(X,Y,Z)+(-1)^1 (\nabla b)(Y,X,Z)=\nabla_{X}b(Y,Z)-\nabla_{Y}b(X,Z).
\end{equation*}

Now we can define the fundamental tensors of our interest here, first via a local frame and then using the operator language. Let $\alpha \in \Lambda_2$,  $X,Y,Z\in TM$, and $\{e_{i}\}_{i=1}^{n}$ be a local normal orthonormal frame on a GRS $(M^n, g, f,\lambda)$. 
\begin{definition}
\label{framedef}
The tensors $P,Q,M,N: \Lambda_2 TM \otimes TM \rightarrow \mathbb{R}$ are defined as:
\begin{align}
\label{equaP}
P_{ijk}&=\nabla_{i}\Rc_{jk}-\nabla_{j}\Rc_{ik}=\nabla_{j}f_{ik}-\nabla_{i}f_{jk}=\RR_{jikp}\nabla^{p}f,
 \\
P(X\wedge Y,Z)&=-\RR(X,Y,Z,\nabla{f})=(d_{\nabla}\Rc)(X,Y,Z)=\delta{\RR}(Z,X,Y), \nonumber \\
P(\alpha, Z)&=\RR(\alpha, \nabla{f}\wedge Z)=\delta\RR(Z,\alpha);\nonumber\\
\label{equaQ}
Q_{ijk}&=g_{ki}\nabla_{j}\SS-g_{kj}\nabla_{i}\SS=2(g_{ki}\RR_{jp}-g_{kj}\RR_{ip})\nabla^{p}f,\\
Q(X\wedge Y,Z)&=2(X,Z)\Rc(Y,\nabla{f})-2(Y,Z)\Rc(X,\nabla{f}), \nonumber \\
Q(\alpha, Z)&=-2\Rc(\alpha(Z),\nabla{f})=-2\left\langle{\alpha{Z},\Rc(\nabla{f})}\right\rangle;\nonumber\\
\label{equaM}
M_{ijk}&=\RR_{kj}\nabla_{i}f-\RR_{ki}\nabla_{j}f,\\
M(X\wedge Y,Z)&=\Rc(Y,Z)\nabla_{X}{f}-\Rc(X,Z)\nabla_{Y}f=-\Rc((X\wedge Y)\nabla{f},Z), \nonumber\\
M(\alpha, Z)&=-\Rc(\alpha(\nabla{f}),Z)=-\left\langle{\alpha{\nabla{f}},\Rc(Z)}\right\rangle; \nonumber\\
\label{equaN}
N_{ijk}&=g_{kj}\nabla_{i}f-g_{ki}\nabla_{j}f,\\
N(X\wedge Y,Z)&=\left\langle{Y,Z}\right\rangle \nabla_{X}{f}-\left\langle{X,Z}\right\rangle \nabla_{Y}f=\left\langle{(X\wedge Y)Z, \nabla{f}}\right\rangle, \nonumber \\
N(\alpha,Z)&=\left\langle{\alpha{Z},\nabla{f}}\right\rangle=-\alpha(Z,\nabla{f}).\nonumber
\end{align}
\end{definition}

\begin{remark}
The tensors $P^{\pm},Q^{\pm},M^{\pm},N^{\pm}: \Lambda_2^{\pm}TM \otimes TM \rightarrow \mathbb{R}$ are defined by restricting $\alpha\in \Lambda_{2}^\pm TM$. They can be seen as operators on $\Lambda_{2}$ by standard projection.
\end{remark}

\begin{remark}
Before proceeding further, let us remark on the essence of these tensors. $P\equiv 0$ if and only if the curvature is harmonic; $Q\equiv 0$ if and only if the scalar curvature is constant; $N\equiv 0$ if and only if the potential function $f$ is constant; finally, $M\equiv 0$ if and only if either $\nabla{f}=0$ or $\Rc$ vanishes on the orthogonal complement of $\nabla{f}$.
\end{remark}

\subsection{Decomposition Lemmas}

Using the framework above, we now can represent the interior product  $i_{\nabla f}$ on components of the curvature tensor as follows. Again the Einstein summation convention is used here.
\begin{lemma} Let $(M,g,f,\lambda)$ be a GRS, for P, Q, M, N as in Definition \ref{framedef}, in a local normal orthonormal frame, we have
\label{interaction}
\begin{align}
\label{equaP1}
\RR_{ijkp}\nabla^{p}{f}&=\RR(e_i, e_{j}, e_{k}, \nabla f)=-P_{ijk}=\nabla^{p}\RR_{ijkp}=-\delta \RR(e_k, e_i, e_j),\\
(g\circ g)_{ijkp}\nabla^{p}f&=(g\circ g)(e_i, e_{j}, e_{k}, \nabla f)=-2N_{ijk},\\
(\Rc\circ g)_{ijkp}\nabla^{p}f&=(\Rc\circ g)(e_i, e_{j}, e_{k}, \nabla f)=\frac{1}{2}Q_{ijk}-M_{ijk},\\
\HH_{ijkp}\nabla^{p}f&=H(e_i, e_{j}, e_{k}, \nabla f)=M_{ijk}-\frac{1}{2}Q_{ijk}-2\lambda N_{ijk},\\
\label{equaWf}
\WW_{ijkp}\nabla^{p}f&=\WW(e_i, e_{j}, e_{k}, \nabla f)\\
&=-P_{ijk}-\frac{Q_{ijk}}{2(n-2)}+\frac{M_{ijk}}{(n-2)}-\frac{\SS N_{ijk}}{(n-1)(n-2)}. \nonumber 
\end{align}
\end{lemma}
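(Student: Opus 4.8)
The strategy is purely algebraic: start from the curvature decomposition \eqref{curvdecom}, contract every term against $\nabla f$ in the last slot, and recognize each resulting expression as one of the fundamental tensors $P,Q,M,N$ of Definition \ref{framedef}. The only genuine input from the soliton structure is the pair of identities \eqref{rcdiv} and \eqref{rcandf}, together with the Bianchi-type relation $P_{ijk}=\RR_{jikp}\nabla^p f = \nabla^p \RR_{ijkp}$ already recorded in \eqref{equaP}; everything else is bookkeeping with the Kulkarni--Nomizu product.

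First I would dispose of the elementary pieces. For \eqref{equaP1}, contracting $\RR_{ijkp}\nabla^p f$ is exactly the defining formula for $-P_{ijk}$ in \eqref{equaP}, and the identification with $\nabla^p\RR_{ijkp}$ and with $-\delta\RR(e_k,e_i,e_j)$ is again the content of \eqref{equaP}. For the $g\circ g$ term, expand $(g\circ g)_{ijkp}=2(g_{ik}g_{jp}-g_{il}... )$ — more precisely $(g\circ g)_{ijkp}=2(g_{ik}g_{jp}-g_{ip}g_{jk})$ — contract with $\nabla^p f$ to get $2(g_{ik}f_j - g_{jk}f_i)$, where $f_i := \nabla_i f$; comparing with the definition of $N_{ijk}=g_{kj}f_i - g_{ki}f_j$ gives $-2N_{ijk}$. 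For the $\Rc\circ g$ term, $(\Rc\circ g)_{ijkp}=\Rc_{ik}g_{jp}+\Rc_{jp}g_{ik}-\Rc_{ip}g_{jk}-\Rc_{jk}g_{ip}$; contracting with $\nabla^p f$ yields $\Rc_{ik}f_j + \Rc(\nabla f)_i\, g_{ik}... $, i.e. the four terms $\Rc_{ik}f_j - \Rc_{jk}f_i$ (which is $-M_{ijk}$) plus $g_{ik}\Rc_{jp}f^p - g_{jk}\Rc_{ip}f^p$. Now invoke \eqref{rcandf}: $\Rc_{jp}f^p = \tfrac12 \nabla_j \SS$, so the latter two terms equal $\tfrac12(g_{ik}\nabla_j\SS - g_{jk}\nabla_i\SS) = \tfrac12 Q_{ijk}$ by the definition of $Q$. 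Hence $(\Rc\circ g)_{ijkp}\nabla^p f = \tfrac12 Q_{ijk} - M_{ijk}$.

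The $H$-term follows by combining the two previous computations with the soliton equation itself. Since $H=\He f \circ g = (\lambda g - \Rc)\circ g = \lambda\, g\circ g - \Rc\circ g$ by \eqref{GRS}, contracting gives $H_{ijkp}\nabla^p f = \lambda(-2N_{ijk}) - (\tfrac12 Q_{ijk} - M_{ijk}) = M_{ijk} - \tfrac12 Q_{ijk} - 2\lambda N_{ijk}$, as claimed. Finally, \eqref{equaWf} is obtained by solving the curvature decomposition \eqref{curvdecom} for $\WW$, namely $\WW = \RR + \tfrac{S}{2(n-1)(n-2)} g\circ g - \tfrac{1}{n-2}\Rc\circ g$, contracting with $\nabla^p f$, and substituting the three identities just established: the $\RR$ piece contributes $-P_{ijk}$, the $g\circ g$ piece contributes $\tfrac{S}{2(n-1)(n-2)}(-2N_{ijk}) = -\tfrac{S N_{ijk}}{(n-1)(n-2)}$, and the $\Rc\circ g$ piece contributes $-\tfrac{1}{n-2}(\tfrac12 Q_{ijk} - M_{ijk}) = -\tfrac{Q_{ijk}}{2(n-2)} + \tfrac{M_{ijk}}{n-2}$. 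Summing gives the stated formula.

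I do not expect a serious obstacle here — the proof is a sequence of Kulkarni--Nomizu expansions. The one point requiring care is sign and index conventions: the Kulkarni--Nomizu product, the antisymmetrization in $(i,j)$ built into $P,Q,M,N$, and the slot into which $\nabla f$ is contracted must all be tracked consistently, and one must use the correct normalization $(g\circ g)_{ijkp} = 2(g_{ik}g_{jp}-g_{ip}g_{jk})$ matching the convention fixed in Section 2. So the ``hard part'' is merely verifying that the coefficients $\tfrac{1}{2(n-2)}$, $\tfrac{1}{n-2}$, $\tfrac{1}{(n-1)(n-2)}$ emerge with the right signs, which is a routine but error-prone check rather than a conceptual difficulty.
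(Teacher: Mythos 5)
Your proposal is correct and follows essentially the same route as the paper: direct Kulkarni--Nomizu expansions for the $g\circ g$ and $\Rc\circ g$ terms using $\Rc(\nabla f)=\tfrac12\nabla\SS$, the soliton equation plus the previous two identities for the $H$ term, and the curvature decomposition \eqref{curvdecom} combined with the earlier formulas for the Weyl term, with the first identity taken from the definition of $P$ and the contracted second Bianchi identity. The coefficients and signs you report all check out against the paper's conventions.
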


\begin{proof}
The first formula is well-known (cf. \cite{ein}), following from the soliton equation and Bianchi identities. For the second, we compute,
\begin{align*}
(g\circ g)_{ijkp}\nabla^{p}f=& 2(g_{ik}g_{jp}-g_{ip}g_{jk})\nabla^{p}f\\
=&2g_{ik}\nabla_{j}f-2g_{jk}\nabla_i f=-2N_{ijk}.
\end{align*}
For the third, we use (\ref{rcandf}) to calculate
\begin{align*}
(\Rc\circ g)_{ijkp}\nabla^{p}f=&(\Rc_{ik}g_{jp}+\Rc_{jp}g_{ik}-\Rc_{ip}g_{jk}-\Rc_{jk}g_{ip})\nabla^p f\\
=&\Rc_{ik}\nabla_j f+\frac{1}{2}(g_{ik}\nabla_j \SS-g_{jk}\nabla_i \SS)-\Rc_{jk}\nabla_i f\\
=&\frac{1}{2}Q_{ijk}-M_{ijk}.
\end{align*}
The next formula is a consequence of the above formulas, definition of H (\ref{equaH}) and the soliton equation (\ref{GRS}). Finally, the last one comes from decomposition of the curvature operator (\ref{curvdecom}) and previous formulas; it appeared, for example, in \cite{cw11}.
\end{proof}

In addition, the divergence on these components can be written as linear combinations of $P, Q, M, N$. 

\begin{lemma} Let $(M,g,f,\lambda)$ be a GRS,  for P, Q, M, N  as in Definition \ref{framedef}, in a local normal orthonormal frame, we have 
\label{div}
\begin{align}
\nabla^{p}\RR_{ijkp}&=-P_{ijk},\\
\nabla^{p}(\SS g\circ g)_{ijkp}&=2Q_{ijk},\\
\label{equaHdiv}
\nabla^p(\Rc\circ g)_{ijkp}&=-\nabla^{p}\HH_{ijkp}=-P_{ijk}+\frac{1}{2}Q_{ijk},\\
\label{equaWdiv}
\nabla^{p}\WW_{ijkp}&=-\frac{n-3}{n-2}P_{ijk}-\frac{n-3}{2(n-1)(n-2)}Q_{ijk}:=-\frac{n-3}{n-2}C_{ijk}.
\end{align}
\end{lemma}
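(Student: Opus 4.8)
The plan is to reduce every identity to two standard ingredients---the contracted second Bianchi identity and the algebra of the Kulkarni--Nomizu product---while repeatedly using that $g$ is parallel. The first identity, $\nabla^{p}\RR_{ijkp}=-P_{ijk}$, is exactly the already-recorded formula (\ref{equaP1}): the contracted second Bianchi identity gives $\nabla^{p}\RR_{ijkp}=\nabla_{j}\Rc_{ik}-\nabla_{i}\Rc_{jk}$, which is $-P_{ijk}$ by Definition \ref{framedef}, so nothing new is needed there.

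For the second identity I would expand $(g\circ g)_{ijkp}=2(g_{ik}g_{jp}-g_{ip}g_{jk})$, hence $(\SS\, g\circ g)_{ijkp}=2\SS(g_{ik}g_{jp}-g_{ip}g_{jk})$; since $g$ is parallel, $\nabla^{p}$ sees only the scalar factor and produces $2(g_{ik}\nabla_{j}\SS-g_{jk}\nabla_{i}\SS)=2Q_{ijk}$. In particular $\nabla^{p}(g\circ g)_{ijkp}=0$, a fact I will reuse below.

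For $\nabla^{p}(\Rc\circ g)_{ijkp}$ I would expand $(\Rc\circ g)_{ijkp}=\Rc_{ik}g_{jp}+\Rc_{jp}g_{ik}-\Rc_{ip}g_{jk}-\Rc_{jk}g_{ip}$ and apply $\nabla^{p}$: the two ``outer'' terms contract to $\nabla_{j}\Rc_{ik}-\nabla_{i}\Rc_{jk}=-P_{ijk}$, and the two ``inner'' terms to $g_{ik}\nabla^{p}\Rc_{jp}-g_{jk}\nabla^{p}\Rc_{ip}=\tfrac12(g_{ik}\nabla_{j}\SS-g_{jk}\nabla_{i}\SS)=\tfrac12 Q_{ijk}$, using the contracted Bianchi identity $\nabla^{p}\Rc_{jp}=\tfrac12\nabla_{j}\SS$; this gives $-P_{ijk}+\tfrac12 Q_{ijk}$. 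For the Hessian half of (\ref{equaHdiv}) I would write the soliton equation as $\He f=\lambda g-\Rc$, so $\HH=\He f\circ g=\lambda\,g\circ g-\Rc\circ g$; since $\nabla^{p}(g\circ g)_{ijkp}=0$, this yields $\nabla^{p}\HH_{ijkp}=-\nabla^{p}(\Rc\circ g)_{ijkp}$.

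Finally, (\ref{equaWdiv}) follows by taking the divergence of the curvature decomposition (\ref{curvdecom}) written as $\WW=\RR+\frac{\SS\,g\circ g}{2(n-1)(n-2)}-\frac{\Rc\circ g}{n-2}$ and substituting the three formulas above: the $P_{ijk}$-coefficients combine to $-1+\frac{1}{n-2}=-\frac{n-3}{n-2}$, and the $Q_{ijk}$-coefficients to $\frac{1}{(n-1)(n-2)}-\frac{1}{2(n-2)}=-\frac{n-3}{2(n-1)(n-2)}$, which is the asserted identity and the definition of $C_{ijk}$. I do not anticipate a genuine obstacle; every step is a finite tensor manipulation. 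The only care points are keeping the sign and normalization conventions of the Kulkarni--Nomizu product and of $P,Q$ consistent so that the contracted Bianchi identity enters with the correct sign, and checking the small arithmetic identity producing the $Q$-coefficient in the Weyl divergence formula.
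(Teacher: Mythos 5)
Your proposal is correct and follows essentially the same route as the paper: the contracted second Bianchi identity for the first formula, direct expansion of the Kulkarni--Nomizu products (using $\nabla g=0$ and $\nabla^{p}\Rc_{jp}=\tfrac12\nabla_{j}\SS$) for the second and third, and the curvature decomposition (\ref{curvdecom}) combined with the previous formulas for (\ref{equaWdiv}), with the coefficient arithmetic checking out. Your explicit derivation of the $\HH$ half of (\ref{equaHdiv}) via $\HH=\lambda\,g\circ g-\Rc\circ g$ and $\nabla^{p}(g\circ g)_{ijkp}=0$ is a small detail the paper leaves implicit, but it is the intended argument.
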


\begin{proof}
The first formula is well-known and comes from the second Bianchi identity \cite{ein}. For the second, we compute,
\begin{align*}
\nabla^{p}(\SS g\circ g)_{ijkp}&=2\nabla^{p}(\SS g_{ik}g_{jp}-\SS g_{ip}g_{jk})\\
&=2g_{ik} g_{jp}\nabla^p \SS-g_{jk}g_{ip}\nabla^{p}\SS\\
 &=2g_{ik}\nabla_j \SS - g_{jk}\nabla_i \SS=2Q_{ijk}.
\end{align*}
For the next one, we use (\ref{rcdiv}) to calculate,
\begin{align*}
\nabla^p(\Rc\circ g)_{ijkp}&= \nabla^p (\Rc_{ik}g_{jp}+\Rc_{jp}g_{ik}-\Rc_{ip}g_{jk}-\Rc_{jk}g_{ip})\\
&=g_{jp}\nabla^p\Rc_{ik}+g_{ik}\nabla^p\Rc_{jp}-g_{jk}\nabla^p\Rc_{ip}-g_{ip}\nabla^p\Rc_{jk}\\
&=\nabla_j\Rc_{ik}+\frac{1}{2}(g_{ik}\nabla_j\SS-g_{jk}\nabla_i\SS)-\nabla_i\Rc_{jk}\\
&=\frac{1}{2}Q_{ijk}-P_{ijk}.
\end{align*}
 Finally, the last one comes from decomposition of curvature (\ref{curvdecom}) and previous formulas; it also appeared in, for example, \cite[Eq. (9)]{Derd83}. 
\end{proof}
\begin{remark}
$C$ defined in (\ref{equaWdiv}) is also called the Cotton tensor in literature.
\end{remark}

\begin{remark}
\label{interactionpm} By the standard projection, and 
\begin{align*}
(\delta\WW)^{\pm}&=\delta(\WW^{\pm}), \\
(i_{\nabla{f}}\WW)^{\pm}&=i_{\nabla{f}}\WW^{\pm},
\end{align*}
the analogous identities hold if replacing $\WW, P, Q, M, N$ in Lemmas \ref{interaction} and \ref{div}  by $\WW^{\pm}, P^{\pm},Q^{\pm},M^{\pm},N^{\pm}$, respectively.
\end{remark}

The following observation is an immediate consequence of Lemma \ref{div}.

\begin{proposition} Let $(M^n,g,f,\lambda)$, $n>2$,  be a GRS and $\HH$ given by (\ref{equaH}). Then the tensor 
\[ F=\WW+\frac{n-3}{n-2}\HH+\frac{n(n-3)\SS}{4(n-1)(n-2)}g\circ g\]
is divergence free.
\end{proposition}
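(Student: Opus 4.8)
The plan is to compute the divergence of each of the three summands of $F$ using Lemma \ref{div} and then simply add them, checking that all $P$-terms and all $Q$-terms cancel. First I would recall from Lemma \ref{div} that $\nabla^{p}\WW_{ijkp}=-\frac{n-3}{n-2}P_{ijk}-\frac{n-3}{2(n-1)(n-2)}Q_{ijk}$, that $\nabla^{p}\HH_{ijkp}=P_{ijk}-\frac{1}{2}Q_{ijk}$ (from \eqref{equaHdiv}, which gives $\nabla^{p}(\Rc\circ g)_{ijkp}=-\nabla^{p}\HH_{ijkp}=-P_{ijk}+\tfrac12 Q_{ijk}$), and that $\nabla^{p}(\SS g\circ g)_{ijkp}=2Q_{ijk}$.

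Next I would form the weighted sum dictated by the definition of $F$:
\begin{align*}
\nabla^{p}F_{ijkp}=&\left(-\frac{n-3}{n-2}P_{ijk}-\frac{n-3}{2(n-1)(n-2)}Q_{ijk}\right)+\frac{n-3}{n-2}\left(P_{ijk}-\frac{1}{2}Q_{ijk}\right)\\
&+\frac{n(n-3)}{4(n-1)(n-2)}\cdot 2Q_{ijk}.
\end{align*}
The $P_{ijk}$ contributions are $-\frac{n-3}{n-2}+\frac{n-3}{n-2}=0$. For the $Q_{ijk}$ contributions one collects the coefficient
\[
-\frac{n-3}{2(n-1)(n-2)}-\frac{n-3}{2(n-2)}+\frac{n(n-3)}{2(n-1)(n-2)}
=\frac{n-3}{2(n-1)(n-2)}\bigl(-1-(n-1)+n\bigr)=0,
\]
so $\nabla^{p}F_{ijkp}=0$, i.e. $\delta F=0$ as a $(3,0)$-tensor (equivalently as an operator $\Lambda_2 TM\otimes TM\to\mathbb{R}$).

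I do not expect any genuine obstacle here: the statement is a bookkeeping consequence of Lemma \ref{div}, and the only thing to be careful about is the normalization conventions for $\delta$ and for the Kulkarni-Nomizu products so that the coefficients match exactly as above. If one prefers a cleaner route, one can instead observe from the Proposition's own structure that $F=\WW+\frac{n-3}{n-2}\HH+\frac{n-3}{n-2}\cdot\frac{n\SS}{4(n-1)}g\circ g$ and note that, by \eqref{equaHdiv} and $\nabla^{p}(\SS g\circ g)_{ijkp}=2Q_{ijk}$, the combination $\HH+\frac{n\SS}{4(n-1)}g\circ g$ has divergence $P_{ijk}-\tfrac12 Q_{ijk}+\tfrac{n}{2(n-1)}Q_{ijk}=P_{ijk}+\tfrac{1}{2(n-1)}Q_{ijk}$; multiplying by $\frac{n-3}{n-2}$ gives exactly the negative of $\nabla^{p}\WW_{ijkp}$ by \eqref{equaWdiv}. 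Either way the proof is a three-line computation.
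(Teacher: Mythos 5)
Your proposal is correct and is exactly the route the paper intends: the paper offers no separate proof, stating only that the proposition is an immediate consequence of Lemma \ref{div}, and your computation carries out precisely that bookkeeping, with the $P$-terms cancelling between $\delta\WW$ and $\frac{n-3}{n-2}\delta\HH$ and the $Q$-coefficient $-\frac{n-3}{2(n-1)(n-2)}-\frac{n-3}{2(n-2)}+\frac{n(n-3)}{2(n-1)(n-2)}$ vanishing. Nothing further is needed.
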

\begin{remark}
The result can be viewed as a generalization of the harmonicity of the Weyl tensor on an Einstein manifold.  
\end{remark}

Lastly, we introduce the following tensor $D$ which plays a crucial role in the classification problem (cf. \cite{caochen11}, \cite{caochen12}, \cite{cw11}),
\begin{align}
\label{equaD}
D_{ijk}&=-\frac{Q_{ijk}}{2(n-1)(n-2)}+\frac{M_{ijk}}{n-2}-\frac{\SS N_{ijk}}{(n-1)(n-2)}\\
&=C_{ijk}+\WW_{ijkp}\nabla^{p}f. \nonumber
\end{align}

\subsection{Norm Calculations}

\begin{lemma}
\label{product}
Let $(M,g,f,\lambda)$ be a GRS, then the following identities hold:
\begin{align*}
2\left\langle{P,Q}\right\rangle&=-|\nabla{\SS}|^2,\\
2\left\langle{P,N}\right\rangle&=\left\langle{\nabla{f}, \nabla{\SS}}\right\rangle,\\
2\left\langle{Q,Q}\right\rangle&=2(n-1)|\nabla{\SS}|^2,\\
2\left\langle{M,M}\right\rangle&=2|\Rc|^2|\nabla{f}|^2-\frac{1}{2}|\nabla{\SS}|^2,\\
2\left\langle{N,N}\right\rangle&=2(n-1)|\nabla{f}|^2,\\
2\left\langle{Q,M}\right\rangle&=|\nabla{\SS}|^2-2\SS \left\langle{\nabla{f},\nabla{\SS}}\right\rangle,\\
2\left\langle{Q,N}\right\rangle&=-2(n-1)\left\langle{\nabla{f},\nabla{\SS}}\right\rangle,\\
2\left\langle{M,N}\right\rangle&=2S|\nabla{f}|^2-\left\langle{\nabla{f},\nabla{\SS}}\right\rangle.
\end{align*}
Furthermore, if M is closed, then 
\begin{align*}
\int_{M}2\left\langle{P,P}\right\rangle e^{-f}=&\int_{M}|\nabla \Rc|^2 e^{-f},\\
\int_{M} 2\left\langle{P,M}\right\rangle=&2\int_{M}(\lambda|\Rc|^2-\Rc^3)+\int_{M}\left\langle{\nabla{f},\nabla |\Rc|^2}\right\rangle+\frac{1}{2}\int_{M}|\nabla\SS|^2.
\end{align*}
\end{lemma}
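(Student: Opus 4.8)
The plan is to verify each of the ten identities by direct computation in a local normal orthonormal frame, using the definitions in Definition \ref{framedef} together with the soliton identities \eqref{rcdiv}, \eqref{rcandf}, and \eqref{lapS}. For the eight pointwise identities, I would first record the basic contraction formulas: for a vector field $X$ expressed via $\nabla f$, the tensors $P,Q,M,N$ are all of the shape $T_{ijk}=A_{ik}V_j-A_{jk}V_i$ for suitable symmetric $A$ and covector $V$, so that
\[
\langle T,T'\rangle=\sum_{i<j,k}T_{ijk}T'_{ijk}=\tfrac12\sum_{i,j,k}(A_{ik}V_j-A_{jk}V_i)(A'_{ik}V'_j-A'_{jk}V'_i),
\]
which expands into traces of products of $A,A'$ against $V,V'$. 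Concretely, $N_{ijk}=g_{kj}f_i-g_{ki}f_j$, $M_{ijk}=\Rc_{kj}f_i-\Rc_{ki}f_j$, $Q_{ijk}=g_{ki}\SS_j-g_{kj}\SS_i$, and $P_{ijk}=\nabla_j f_{ik}-\nabla_i f_{jk}$; here I would immediately use $P_{ijk}=\RR_{jikp}f^p$ from \eqref{equaP}, and the key substitutions $\Rc_{ip}f^p=\tfrac12\SS_i$ from \eqref{rcandf} and $\nabla^p\RR_{ijkp}=\RR_{kp}\delta\text{-contractions}$, i.e.\ the contracted second Bianchi identity. Each of the eight inner products then reduces to a short algebraic manipulation: e.g.\ $2\langle P,N\rangle=\sum_{i,j,k}\RR_{jikp}f^p(g_{kj}f_i-g_{ki}f_j)=2\RR_{ip}f^p f_i=\langle\nabla f,\nabla\SS\rangle$ using \eqref{rcandf}, and $2\langle M,M\rangle$ expands to $2|\Rc|^2|\nabla f|^2-2\Rc(\nabla f,\nabla f)^2\cdot(\text{something})$ which collapses via $\Rc(\nabla f)=\tfrac12\nabla\SS$ to give $2|\Rc|^2|\nabla f|^2-\tfrac12|\nabla\SS|^2$. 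The identity $2\langle Q,Q\rangle=2(n-1)|\nabla\SS|^2$ just uses $g_{ik}g_{ik}=n$ and one contraction, and similarly for $\langle N,N\rangle$, $\langle Q,M\rangle$, $\langle Q,N\rangle$, $\langle M,N\rangle$.

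For the first integral identity, $\int 2\langle P,P\rangle e^{-f}=\int|\nabla\Rc|^2 e^{-f}$, the plan is to integrate by parts with the weighted measure $e^{-f}d\mu$. Write $2\langle P,P\rangle=\sum_{i,j,k}(\nabla_i\Rc_{jk}-\nabla_j\Rc_{ik})^2=2|\nabla\Rc|^2-2\sum\nabla_i\Rc_{jk}\nabla_j\Rc_{ik}$, so it suffices to show $\int\sum_{i,j,k}\nabla_i\Rc_{jk}\nabla_j\Rc_{ik}\,e^{-f}=0$. Integrating by parts in the $i$ index moves $\nabla_i$ onto $\nabla_j\Rc_{ik}e^{-f}$, producing $-\int\Rc_{jk}(\nabla_i\nabla_j\Rc_{ik}-\nabla_j\Rc_{ik}f_i)e^{-f}$; then commute derivatives and use the contracted Bianchi identity $\nabla_i\Rc_{ik}=\tfrac12\SS_k$ together with $\nabla^i\Rc_{ij}=\Rc_{ij}f^i$ from \eqref{rcdiv}, and the Ricci identity for commuting $\nabla_i\nabla_j$ on $\Rc_{ik}$ (introducing curvature terms that should pair up and cancel, or recombine into the divergence of something). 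This is essentially the Bochner-type identity $\delta^*\delta\Rc$ versus $\nabla^*\nabla\Rc$ adapted to the $f$-weighted Laplacian, and I expect the GRS structure to force the cross term to vanish exactly.

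The second integral identity, $\int 2\langle P,M\rangle=2\int(\lambda|\Rc|^2-\Rc^3)+\int\langle\nabla f,\nabla|\Rc|^2\rangle+\tfrac12\int|\nabla\SS|^2$, is the one I expect to be the main obstacle, since it is an \emph{unweighted} integral and mixes $P$ (derivatives of $\Rc$) with $M$ (which carries $\nabla f$ and $\Rc$), producing a cubic term $\Rc^3=\Rc_{ij}\Rc_{jk}\Rc_{ki}$. The approach: $2\langle P,M\rangle=\sum_{i,j,k}(\nabla_i\Rc_{jk}-\nabla_j\Rc_{ik})(\Rc_{kj}f_i-\Rc_{ki}f_j)$, which by the antisymmetry collapses to $2\sum\nabla_i\Rc_{jk}\Rc_{kj}f_i - 2\sum\nabla_i\Rc_{jk}\Rc_{ki}f_j$; the first piece is $\sum_i f_i\nabla_i|\Rc|^2=\langle\nabla f,\nabla|\Rc|^2\rangle$ directly. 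For the second piece, integrate by parts in $i$: move $\nabla_i$ off $\Rc_{jk}$, landing on $\Rc_{ki}f_j$, giving (after using $\nabla_i\Rc_{ki}=\tfrac12\SS_k$ and $\nabla_i f_j=\lambda g_{ij}-\Rc_{ij}$ from the soliton equation) terms $-\int\Rc_{jk}(\tfrac12\SS_k f_j + \Rc_{ki}(\lambda g_{ij}-\Rc_{ij}))=-\int(\tfrac12\Rc(\nabla\SS,\nabla f)+\lambda|\Rc|^2-\Rc^3)$; then replace $\Rc(\nabla f)=\tfrac12\nabla\SS$ to turn $\Rc(\nabla\SS,\nabla f)$ into $\tfrac12|\nabla\SS|^2$. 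Collecting signs and factors should yield exactly the claimed right-hand side. The delicate points are the bookkeeping of the antisymmetrization (the $P$ tensor's two terms each contribute), getting the coefficient of $|\nabla\SS|^2$ right (it comes out as $2\cdot\tfrac14=\tfrac12$), and making sure no stray curvature-commutator terms survive — which they should not, because $\Rc$ has no second derivatives left after the one integration by parts.
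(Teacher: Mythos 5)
Your treatment of the eight pointwise identities and of the second integral identity is essentially the paper's own argument: the pointwise formulas are obtained exactly as in the paper by expanding $P,Q,M,N$ in a normal frame and substituting $\Rc(\nabla f)=\tfrac12\nabla\SS$ and the contracted Bianchi identity, and your computation of $\int_M 2\langle P,M\rangle$ (split off $\langle\nabla f,\nabla|\Rc|^2\rangle$, integrate the remaining piece by parts once, use $f_{ij}=\lambda g_{ij}-\Rc_{ij}$ and $\nabla^j\Rc_{jk}=\tfrac12\nabla_k\SS=\Rc_{kj}f^j$) reproduces the paper's proof, including the coefficient $\tfrac12$ on $\int|\nabla\SS|^2$.

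There is, however, a genuine gap in your plan for the first weighted identity $\int_M 2\langle P,P\rangle e^{-f}=\int_M|\nabla\Rc|^2e^{-f}$ (which, incidentally, the paper does not prove but cites from the literature). By your own expansion, $2\langle P,P\rangle=2|\nabla\Rc|^2-2\sum\nabla_i\Rc_{jk}\nabla_j\Rc_{ik}$, so the identity is equivalent to
\begin{equation*}
\int_M \sum_{i,j,k}\nabla_i\Rc_{jk}\,\nabla_j\Rc_{ik}\,e^{-f}\;=\;\tfrac12\int_M|\nabla\Rc|^2e^{-f},
\end{equation*}
not to the vanishing of this cross term: if the cross term were zero you would obtain $\int 2\langle P,P\rangle e^{-f}=2\int|\nabla\Rc|^2e^{-f}$, twice the claimed value. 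Your expectation that ``the GRS structure forces the cross term to vanish exactly'' cannot be right: combined with the (true) identity it would give $\int|\nabla\Rc|^2e^{-f}=0$, i.e.\ parallel Ricci curvature, on every closed GRS, which fails already for non-Einstein compact shrinkers such as the Koiso--Cao soliton. Relatedly, the curvature terms produced by commuting $\nabla_i\nabla_j$ do not ``pair up and cancel'': after the weighted integration by parts one is left with $\tfrac12\int\Rc_{jk}\nabla_j\nabla_k\SS\,e^{-f}$ together with cubic terms of the type $\RR_{ijkl}\Rc_{il}\Rc_{jk}$ and $\Rc^3$, and these must be converted using the second-order soliton identities (for instance $\Delta_f\Rc_{ik}=2\lambda\Rc_{ik}-2\RR_{ijkl}\Rc_{jl}$, $\Delta_f\SS=2\lambda\SS-2|\Rc|^2$, and the soliton equation again) so that they recombine into exactly $\tfrac12\int|\nabla\Rc|^2e^{-f}$. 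So the correct statement of the reduction, and the second-order soliton input needed to establish it, are both missing from your argument; as written it would prove an identity off by a factor of two.
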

\begin{proof}
The main technique is to compute  under a normal orthonormal local frame. For example,
\begin{align*}
2\left\langle{P,Q}\right\rangle &=P_{ijk}Q_{ijk}\\
&=(\nabla_{i}\Rc_{jk}-\nabla_{j}\Rc_{ik})(g_{ki}\nabla_{j}\SS-g_{kj}\nabla_{i}\SS)\\
&=2(\nabla_{i}\Rc_{jk}-\nabla_{j}\Rc_{ik})g_{ki}\nabla_{j}\SS\\
&=2\nabla_{j}\SS(\nabla_{k}\Rc_{kj}-\nabla_{j}\Rc_{kk})\\
&=|\nabla \SS|^2-2|\nabla \SS|^2=-|\nabla\SS|^2.
\end{align*}
Other equations follow from similar calculation. \\

When M is closed, we can integrate by parts.  In particular, the first equation was first derived in \cite{ein}. For the second, we compute that
\begin{align*}
\int_{M}2\left\langle{P,M}\right\rangle&=2\int_{M}(\nabla_{i}\Rc_{jk}-\nabla_{j}\Rc_{ik})\Rc_{kj}\nabla_{i}f\\
&=\int_{M}\nabla_{i}f\nabla_{i}\Rc_{jk}^2-2\int_M \nabla_{j}\Rc_{ik}\Rc_{kj}\nabla_{i}f,\\
\int_M \nabla_{j}\Rc_{ik}\Rc_{kj}\nabla_{i}f&=-\int_{M}\Rc_{ik}\Rc_{kj}f_{ij}-\int_{M}\Rc_{ik}f_{i}\nabla_{j}\Rc_{kj}\\
&=-\int_{M}(\lambda|\Rc|^2-\Rc^3)-\frac{1}{4}\int_{M} |\nabla\SS|^2. 
\end{align*}
Hence, the statement follows.
\end{proof}

\begin{remark} The factor of 2 is due to our convention of calculating norm. Some special cases of dimension four also appeared in \cite[Proposition 4]{bs11}.
\end{remark}

An interesting consequence of the above calculation  is the following corollary, which exposes the orthogonality of $Q, N$ versus $i_{\nabla f}\WW$, $\delta\WW$. 

\begin{corollary}
\label{QD0}
  Let $(M,g,f,\lambda)$ be a GRS. 
  
{\bf a.} At each point, we have
\begin{equation*}
0=\left\langle{Q, i_{\nabla{f}}\WW}\right\rangle=\left\langle{N,i_{\nabla f}\WW}\right\rangle=\left\langle{Q,\delta\WW}\right\rangle=\left\langle{N,\delta\WW}\right\rangle.
\end{equation*}

{\bf b.} If $M$ is closed, then,
\begin{equation}
\int_{M} 2|\delta \WW|^2 e^{-f} =(\frac{n-3}{n-2})^2 \int_{M}(|\nabla \Rc|^2-\frac{1}{(n-1)}|\nabla \SS|^2) e^{-f}.
\end{equation}
\end{corollary}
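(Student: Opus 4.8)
\textbf{Proof proposal for Corollary \ref{QD0}.}

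The plan is to deduce both parts directly from the decomposition formulas in Lemmas \ref{interaction} and \ref{div} together with the norm identities in Lemma \ref{product}. For part (a), I would start from equation (\ref{equaWf}), which expresses $i_{\nabla f}\WW$ as the linear combination $-P-\frac{1}{2(n-2)}Q+\frac{1}{n-2}M-\frac{\SS}{(n-1)(n-2)}N$, and from (\ref{equaWdiv}), which gives $\delta\WW = -\frac{n-3}{n-2}\left(P+\frac{1}{2(n-1)}Q\right)$. Taking the pointwise inner product with $Q$ and with $N$ respectively, the claim reduces to checking that the relevant linear combination of the pairings $\langle P,Q\rangle$, $\langle Q,Q\rangle$, $\langle M,Q\rangle$, $\langle N,Q\rangle$ (and the same with $N$ in the second slot) vanishes. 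Each of these pairings is listed explicitly in Lemma \ref{product}, so this is a bookkeeping computation: for instance $\langle Q,\delta\WW\rangle$ is a multiple of $2\langle P,Q\rangle + \frac{1}{n-1}\langle Q,Q\rangle = -|\nabla\SS|^2 + \frac{1}{n-1}\cdot(n-1)|\nabla\SS|^2 = 0$, and similarly for the other three. I would present this as a short unified computation rather than four separate ones.

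For part (b), the idea is to compute $2|\delta\WW|^2$ pointwise using (\ref{equaWdiv}), namely $2|\delta\WW|^2 = \left(\frac{n-3}{n-2}\right)^2\left(2\langle P,P\rangle + \frac{1}{n-1}\cdot 2\langle P,Q\rangle + \frac{1}{4(n-1)^2}\cdot 2\langle Q,Q\rangle\right)$. Substituting the closed-form values from Lemma \ref{product} for the last two terms — $2\langle P,Q\rangle = -|\nabla\SS|^2$ and $2\langle Q,Q\rangle = 2(n-1)|\nabla\SS|^2$ — collapses the $Q$-contributions: $\frac{1}{n-1}(-|\nabla\SS|^2) + \frac{1}{4(n-1)^2}\cdot 2(n-1)|\nabla\SS|^2 = -\frac{1}{n-1}|\nabla\SS|^2 + \frac{1}{2(n-1)}|\nabla\SS|^2 = -\frac{1}{2(n-1)}|\nabla\SS|^2$. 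At this stage the pointwise identity reads $2|\delta\WW|^2 = \left(\frac{n-3}{n-2}\right)^2\left(2\langle P,P\rangle - \frac{1}{2(n-1)}|\nabla\SS|^2\right)$. The remaining step is to integrate against the weighted measure $e^{-f}d\mu$ and invoke the closed-manifold identity $\int_M 2\langle P,P\rangle e^{-f} = \int_M |\nabla\Rc|^2 e^{-f}$ from Lemma \ref{product}, which immediately yields the stated formula since $|\nabla\SS|^2$ is already in the desired form.

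The only genuine subtlety, and the step I expect to need the most care, is the bookkeeping of constants: one must be consistent about the factor-of-$2$ convention in the norm of $(4,0)$ versus $(3,0)$ tensors flagged in Section 2, and about the fact that the pairings in Lemma \ref{product} are stated as $2\langle\,\cdot\,,\cdot\,\rangle$ (i.e. as contractions $T_{ijk}U_{ijk}$ without the $i<j$ restriction). In practice this means every inner product appearing in the expansion of $|\delta\WW|^2$ should be matched against the $2\langle\cdot,\cdot\rangle$ entries of Lemma \ref{product}, and the $\left(\frac{n-3}{n-2}\right)^2$ prefactor carried through untouched. Once the conventions are pinned down the computation is purely mechanical, and no analytic input beyond the already-cited integration by parts (which produces $\int|\nabla\Rc|^2 e^{-f}$ from $\int 2\langle P,P\rangle e^{-f}$) is required.
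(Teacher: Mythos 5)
Your proposal is correct and follows essentially the same route as the paper: part (a) is exactly the paper's computation (expand $i_{\nabla f}\WW$ and $\delta\WW$ via Lemmas \ref{interaction} and \ref{div} and cancel using the pairings of Lemma \ref{product}), and in part (b) the only cosmetic difference is that the paper invokes part (a) to drop the $Q$-cross term in $\langle P+\frac{Q}{2(n-1)},\,P+\frac{Q}{2(n-1)}\rangle$ while you substitute the pairings directly. One caveat about your closing claim: the computation yields the coefficient $\frac{1}{2(n-1)}$ in front of $|\nabla\SS|^2$ --- which is precisely what the final line of the paper's own proof shows --- whereas the displayed statement of the corollary has $\frac{1}{n-1}$, so the constant in the printed statement appears to be a typo rather than something your argument ``immediately yields.''
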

\begin{proof}
Part a) follows immediately from Lemmas \ref{interaction}, \ref{div}, \ref{product}, and our convention (\ref{productconv}). For example,
\begin{align*}
\left\langle{Q, i_{\nabla{f}}\WW}\right\rangle=&\sum_{i<j} Q_{ijk}(i_{\nabla f}\WW)_{kij}\\
=&\sum_{i<j} Q_{ijk}\nabla^p f \WW_{pkij}=-\sum_{i<j} Q_{ijk} \WW_{ijkp}\nabla^p f\\
=&\left\langle{Q, P+\frac{Q}{2(n-2)}-\frac{M}{n-2}+\frac{\SS N}{(n-1)(n-2)}}\right\rangle\\
=&-\frac{|\nabla \SS|^2}{2}+\frac{(n-1)|\nabla \SS|^2}{2(n-2)}-\frac{|\nabla \SS|^2}{2(n-2)}+
\frac{\SS \left\langle{\nabla f, \nabla \SS}\right\rangle}{n-2}-\frac{(n-1)\SS \left\langle{\nabla f, \nabla \SS}\right\rangle}{(n-1)(n-2)}\\
=&0.
\end{align*}

Other formulas follow from similar calculations.

For part b) we observe that,
\begin{align*}
|\delta \WW|^2=&(\frac{n-3}{n-2})^2 \left\langle{P+\frac{Q}{2(n-1)},P+\frac{Q}{2(n-1)}}\right\rangle\\
=&(\frac{n-3}{n-2})^2 \left\langle{P+\frac{Q}{2(n-1)},P}\right\rangle.
\end{align*}
Notice that we apply part a) in the last step. Consequently, applying Lemma \ref{product} again yields
\begin{align*}
2\int_{M} |\delta \WW|^2 e^{-f} &= (\frac{n-3}{n-2})^2 \int_{M} 2\left\langle{P+\frac{Q}{2(n-1)},P}\right\rangle e^{-f}\\
&=(\frac{n-3}{n-2})^2 \int_{M}(|\nabla\Rc|^2-\frac{|\nabla \SS|^2}{2(n-1)})e^{-f}.
\end{align*}
\end{proof}

\begin{remark}
Part b) recovers the well-known fact that harmonic curvature implies harmonic Weyl tensor and constant scalar curvature.
\end{remark}

Now we are ready to prove Theorem \ref{intanddiv}.
\begin{proof}{\bf (Theorem \ref{intanddiv})}
First, we observe,
\begin{align*}
\left\langle{\WW, \He{f}\circ \He{f}}\right\rangle&=\sum_{i<j, k<l}\WW_{ijkl}(\He{f}\circ \He{f})_{ijkl}\\
&=\frac{1}{2}\sum_{k<l; i,j}\WW_{ijkl}(\He{f}\circ \He{f})_{ijkl}\\
&=\sum_{k<l; i,j}\WW_{ijkl}({f}_{ik}{f}_{jl}-{f}_{il}{f}_{jk})\\
&=\sum_{i,j,k,l}\WW_{ijkl}{f}_{ik}{f}_{jl}.
\end{align*}

Next, subduing the summation notation, we integrate by parts,
\[\int_{M}\WW_{ijkl}f_{ik}f_{jl}=-\int_{M} \nabla_{i}\WW_{ijkl}f_{k}f_{jl}-\int_{M}\WW_{ijkl}f_{k}\nabla_{i}f_{jl}.\]

The first term can be written as
\begin{align*}
\int_{M} \nabla_{i}\WW_{ijkl}f_{k}f_{jl}&=\int_{M} \nabla_{i}\WW_{ijkl}f_{k}(\lambda g_{jl}-\Rc_{jl})\\
&=-\int_{M}\nabla_{i}\WW_{ijkl}f_{k}\Rc_{jl}=-\frac{1}{2}\int_{M}(\delta\WW)_{jkl}M_{klj}\\
&=-\int_{M}\left\langle{\delta\WW,M}\right\rangle.
\end{align*}

Next, we compute the second term,
\begin{align*}
\int_{M}\WW_{ijkl}f_{k}\nabla_{i}f_{jl}&=-\int_{M}\WW_{ijlk}f_{k}\nabla_{i}(g_{jl}-\Rc_{jl})= \int_{M}\WW_{ijlk}f_{k}\nabla_{i}\Rc_{jl}\\
&=\frac{1}{2}\int_{M}\WW_{ijlk}f_{k}P_{ijl}=-\int_{M}\left\langle{i_{\nabla f}\WW,P+\frac{Q}{2(n-1)}}\right\rangle\\
&=-\frac{n-2}{n-3}\int_{M}\left\langle{\delta\WW, i_{\nabla f}\WW}\right\rangle\\
&=\frac{n-2}{n-3}\int_{M}\left\langle{\delta\WW, -P+\frac{M}{n-2}}\right\rangle.
\end{align*}
It is noted that we have used Corollary \ref{QD0} repeatedly to manipulate $Q$ and $N$. To conclude, we combine equations above,
\begin{align*}
\int_{M}\WW_{ijkl}f_{ik}f_{jl}&=\int_{M}\left\langle{\delta\WW,M}\right\rangle-\frac{n-2}{n-3}\int_{M}\left\langle{\delta\WW, -P+\frac{M}{n-2}}\right\rangle\\
&=\frac{1}{n-3}\int_{M}\left\langle{\delta\WW, (n-2)P+(n-4){M}}\right\rangle.
\end{align*}
If $n=4$,  then 
\begin{align*}
\int_{M}\WW_{ijkl}f_{ik}f_{jl}&=\int_{M}2\left\langle{\delta\WW, P}\right\rangle=\int_{M} 2\left\langle{\delta\WW, P+\frac{Q}{6}}\right\rangle\\
&=\int_{M}2\left\langle{\delta\WW, 2\delta\WW}\right\rangle=4\int_{M}|\delta\WW|^2.
\end{align*}
\end{proof}
\begin{remark} The formula in dimension four is also a consequence of the divergence-free property of the Bach tensor. We omit the details here.
\end{remark}

Moreover, in dimension four, we have similar results for $\WW^{\pm}$. 
\begin{lemma}
\label{4dimQD0}
 Let $(M^4,g,f,\lambda)$ be a GRS, then at each point, we have
\begin{equation}
0=\left\langle{Q^{\pm},i_{\nabla f}\WW^{\pm}}\right\rangle=\left\langle{Q^{\pm},\delta\WW^{\pm}}\right\rangle=\left\langle{N^{\pm}, i_{\nabla f}\WW^{\pm}}\right\rangle=\left\langle{N^{\pm},\delta\WW^{\pm}}\right\rangle.
\end{equation}
\end{lemma}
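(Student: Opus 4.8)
The plan is to deduce all four vanishing statements from two facts. First, that the pointwise pairings $\langle Q^{\pm},i_{\nabla f}\WW^{\pm}\rangle$ and $\langle N^{\pm},i_{\nabla f}\WW^{\pm}\rangle$ are zero. Second, that once the $i_{\nabla f}\WW^{\pm}$ terms are disposed of, the remaining pairings are insensitive to the choice of sign, so that the vanishing of the corresponding \emph{full} pairings in Corollary \ref{QD0}(a) forces each self-dual/anti-self-dual part to vanish separately. The starting point is that $Q$ and $N$ (and their $\pm$-projections) are of the algebraic type $N^{(v)}$ with $N^{(v)}_{ijk}:=g_{kj}v_i-g_{ki}v_j$, equivalently $N^{(v)}(\cdot,e_k)=v\wedge e_k$ as a bi-vector: indeed $N=N^{(\nabla f)}$ and, by (\ref{rcandf}), $Q=-N^{(\nabla\SS)}=-2N^{(\Rc(\nabla f))}$. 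Likewise $M(\cdot,e_k)=\nabla f\wedge\Rc(e_k)$, while by (\ref{calpm}) the $(\cdot,e_k)$-value of $i_{\nabla f}\WW^{\pm}$ is $\WW^{\pm}(\nabla f\wedge e_k)=(\WW(\nabla f\wedge e_k))^{\pm}\in\Lambda_2^{\pm}$.

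For the first fact, write $\beta^{\pm}=\tfrac12(\beta\pm\star\beta)$ and use that $\star$ is self-adjoint on $\Lambda_2$ and commutes with $\WW$ (as $\WW$ preserves $\Lambda_2^{\pm}$). For $v\in\{\nabla f,\nabla\SS\}$,
\[
\langle N^{(v)\pm},i_{\nabla f}\WW^{\pm}\rangle=\sum_k\langle(v\wedge e_k)^{\pm},\WW(\nabla f\wedge e_k)\rangle
=\tfrac12\sum_k\bigl(\WW(v\wedge e_k,\nabla f\wedge e_k)\pm\langle\star(v\wedge e_k),\WW(\nabla f\wedge e_k)\rangle\bigr).
\]
The first sum equals $\sum_{a,b}v_a(\nabla f)_b\sum_k\WW_{akbk}$, which is zero since the Weyl tensor is totally trace-free, and the second sum is zero by the first Bianchi identity for $\WW$ (equivalently, it equals $\tfrac12(\mathrm{tr}\,\WW^{+}-\mathrm{tr}\,\WW^{-})\langle v,\nabla f\rangle$, which vanishes because $\WW^{\pm}$ is trace-free, using the orthonormal decomposition of $\Lambda_2^{\pm}$ of Lemma \ref{decom}). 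Hence $\langle Q^{\pm},i_{\nabla f}\WW^{\pm}\rangle=\langle N^{\pm},i_{\nabla f}\WW^{\pm}\rangle=0$.

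For the second fact, combine (\ref{equaWf}), (\ref{equaWdiv}), (\ref{equaD}) and Remark \ref{interactionpm} to get, pointwise, $2\,\delta\WW^{\pm}=i_{\nabla f}\WW^{\pm}+D^{\pm}$ with $D^{\pm}=-\tfrac{1}{12}Q^{\pm}+\tfrac12 M^{\pm}-\tfrac{\SS}{6}N^{\pm}$; the crucial point is that $D^{\pm}$ contains no $P^{\pm}$ term. Pairing with $Q^{\pm}$ or $N^{\pm}$ and using the first fact reduces everything to showing $\langle Q^{\pm},D^{\pm}\rangle=\langle N^{\pm},D^{\pm}\rangle=0$. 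Every summand occurring here has the form $\sum_k\langle(a\wedge e_k)^{\pm},(b\wedge\psi e_k)^{\pm}\rangle$ with $a\in\{\nabla f,\nabla\SS\}$, $b=\nabla f$, and $\psi$ a symmetric $2$-tensor ($\psi=\Rc$ or $\psi=\mathrm{Id}$), and for such an expression
\[
\sum_k\langle(a\wedge e_k)^{+},(b\wedge\psi e_k)^{+}\rangle-\sum_k\langle(a\wedge e_k)^{-},(b\wedge\psi e_k)^{-}\rangle=\sum_k\langle\star(a\wedge e_k),\,b\wedge\psi e_k\rangle=0,
\]
since $(a\wedge e_k)\wedge(b\wedge\psi e_k)=-\,a\wedge b\wedge(e_k\wedge\psi e_k)$ and $\sum_k e_k\wedge\psi e_k=0$ for symmetric $\psi$. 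Therefore $\langle Q^{+},D^{+}\rangle=\langle Q^{-},D^{-}\rangle$, hence $\langle Q^{+},\delta\WW^{+}\rangle=\langle Q^{-},\delta\WW^{-}\rangle$; but their sum is $\langle Q,\delta\WW\rangle$ (the cross terms $\langle Q^{\pm},\delta\WW^{\mp}\rangle$ vanish because $\Lambda_2^{+}\perp\Lambda_2^{-}$), and this is $0$ by Corollary \ref{QD0}(a), so each summand is $0$. The identical argument with $N^{\pm}$ in place of $Q^{\pm}$ completes the proof.

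The anticipated obstacle is organizational rather than computational: one must resist expanding $\delta\WW^{\pm}$ directly via the $\pm$-version of (\ref{equaWdiv}) as $-\tfrac12(P^{\pm}+\tfrac16 Q^{\pm})$, since then one is forced to pair $Q^{\pm}$, $N^{\pm}$ with $P^{\pm}(\cdot,e_k)=(\RR(\nabla f\wedge e_k))^{\pm}$, which brings in the whole Riemannian curvature---in particular the off-diagonal block $C\sim\tfrac12\,\EE\circ g$ coupling $\Lambda_2^{+}$ and $\Lambda_2^{-}$---and the sign-insensitivity then requires the additional inputs $\mathrm{tr}\,A^{+}=\mathrm{tr}\,A^{-}$ and $V\star=-\star V$. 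Routing everything through $D^{\pm}$ and the already-proved $i_{\nabla f}\WW^{\pm}$ identities sidesteps this. Beyond that, the only care needed is the sign and slot bookkeeping in passing among $\WW_{ijkp}\nabla^{p}f$, $(i_{\nabla f}\WW)_{kij}$ and $(\delta\WW)_{kij}$ under the convention (\ref{productconv}), which is routine.
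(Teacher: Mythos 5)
Your proposal is correct, but it takes a genuinely different route from the paper's proof, especially for the $\delta\WW^{\pm}$ identities. The paper works only with the self-dual half: it invokes Berger's normal form to diagonalize $\WW^{+}$ and the action table (\ref{bivectoract}) to show $\left\langle{Q^{+},i_{\nabla f}\WW^{+}}\right\rangle=0$ (tracelessness of $\WW^{+}$ making the coefficients $\eta_k$ vanish), and then computes by hand, in a frame, the half-pairings $\left\langle{P^{+},Q^{+}}\right\rangle=-\tfrac14|\nabla\SS|^2$ and $\left\langle{Q^{+},Q^{+}}\right\rangle=\tfrac32|\nabla\SS|^2$, concluding via $\delta\WW^{+}=\tfrac12P^{+}+\tfrac1{12}Q^{+}$. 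Your first step reaches the same $i_{\nabla f}\WW^{\pm}$ vanishing from tracelessness plus the first Bianchi identity, with no normal form; your second step is the real departure: you write $2\,\delta\WW^{\pm}=i_{\nabla f}\WW^{\pm}+D^{\pm}$ so that no $P$-term appears, prove the $\pm$-insensitivity $\left\langle{Q^{+},D^{+}}\right\rangle=\left\langle{Q^{-},D^{-}}\right\rangle$ (and likewise for $N$) by the clean observation that the difference is a multiple of $\sum_k\left\langle{\star(a\wedge e_k),b\wedge\psi e_k}\right\rangle$, which vanishes because $\sum_k e_k\wedge\psi e_k=0$ for symmetric $\psi$, and then halve the already-known pointwise identities of Corollary \ref{QD0}(a). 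What your route buys is the avoidance of the explicit frame computation of $\left\langle{P^{+},Q^{+}}\right\rangle$ and a transparent identification of the structural inputs (tracelessness, Bianchi, symmetry of $\Rc$ and $\He f$); what the paper's route buys is the extra quantitative information ($\left\langle{P^{+},Q^{+}}\right\rangle$ and $\left\langle{Q^{+},Q^{+}}\right\rangle$ individually), which could be reused elsewhere. Two cosmetic points: in your list of summands for $\left\langle{Q^{\pm},D^{\pm}}\right\rangle$ the $Q$-against-$Q$ term has $b=\nabla\SS$ rather than $b=\nabla f$ (harmless, since your wedge argument is generic in $a$, $b$, and symmetric $\psi$), and the slot/sign bookkeeping you defer does work out: in dimension four $D_{ijk}=2(\delta\WW)_{kij}-(i_{\nabla f}\WW)_{kij}$, which is exactly the relation you use.
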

\begin{proof}
It suffices to show the statements is true for the self-dual part. 

Let $\{e_{i}\}_{i=1}^{4}$ be a normal orthonormal local frame and let $\{\alpha_{i}\}_{i=1}^4$ be  an orthonormal basis for $\Lambda_2^{+}$ . Then 
\begin{align*}
\left\langle{Q^{+},i_{\nabla f}\WW^{+}}\right\rangle=&\sum_{i}\sum_{j}Q(\alpha_i,e_j)\WW(\nabla{f}\wedge e_j,\alpha_i)\\
=&-2\left\langle{\alpha_i(e_j),\Rc(\nabla f)}\right\rangle \WW(\nabla{f}\wedge e_j,\alpha_i).
\end{align*}
Furthermore, we can choose a special basis, namely the normal form as in (\ref{normalform}). Then $\alpha_{i}$'s diagonalize $\WW^{+}$ with eigenvalues $\lambda_i$'s. Consequently,
\begin{equation*}
\WW(\nabla{f}\wedge e_j,\alpha_i)=\lambda_i\alpha_i(\nabla{f}\wedge e_j)=\lambda_i\left\langle{\nabla{f},\alpha_i(e_j)}\right\rangle.
\end{equation*}  
Thus,
\begin{align*} \left\langle{Q^{+},i_{\nabla f}\WW^{+}}\right\rangle=&-2\lambda_i\left\langle{\alpha_i(e_j),\Rc(\nabla f)}\right\rangle \left\langle{\alpha_i(e_j),\nabla{f}}\right\rangle\\
=&-2\eta_k\left\langle{e_k,\Rc(\nabla f)}\right\rangle \left\langle{e_k,\nabla{f}}\right\rangle,\\
\text{for } \eta_k=&\sum_{i,j: \alpha_i(e_j)=\pm e_k}\lambda_{i}.
\end{align*}
Now by (\ref{bivectoract}), it is easy to see that each $\eta_k=0$ because $\WW^{+}$ is traceless. \\

Next, we state the following fact. 

{\bf Claim:} $\left\langle{P^{+},Q^{+}}\right\rangle=-\frac{1}{4}|\nabla \SS|^2.$

To prove this claim, we choose $\{\alpha_i\}$ as in (\ref{Hodgebase}) and observe that, 
\begin{align*}
P(\alpha_1, e_j)Q(\alpha_1, e_j)=& \frac{1}{2}P(e_{12}+e_{34},e_j)Q(e_{12}+e_{34},e_j)\\
=&-(P_{12j}+P_{34j})\left\langle{(e_{12}+e_{34})e_j,\Rc(\nabla{f})}\right\rangle\\
=&-(\nabla_1\Rc_{2j}-\nabla_{2}\Rc_{1j}+\nabla_{3}\Rc_{4j}-\nabla_4\Rc_{3j})\left\langle{(e_{12}+e_{34})e_j,\Rc(\nabla{f})}\right\rangle.\\
\end{align*}

Similarly,
\begin{align*}
P(\alpha_2, e_j)Q(\alpha_2, e_j)
=&-(\nabla_1\Rc_{3j}-\nabla_{3}\Rc_{1j}-\nabla_{2}\Rc_{4j}+\nabla_4\Rc_{2j})\left\langle{(e_{13}-e_{24})e_j,\Rc(\nabla{f})}\right\rangle,\\
P(\alpha_3, e_j)Q(\alpha_3, e_j)
=&-(\nabla_1\Rc_{4j}-\nabla_{4}\Rc_{1j}+\nabla_{2}\Rc_{3j}-\nabla_3\Rc_{2j}) \left\langle{(e_{14}+e_{23})e_j,\Rc(\nabla{f})}\right\rangle.
\end{align*}

Thus, 
\begin{align*}
\left\langle{P^{+},Q^{+}}\right\rangle=&\sum_{i,j}P(\alpha_i,e_j)Q(\alpha_i,e_j)\\
=&-\sum_{k}\zeta_{k}\left\langle{e_k,\Rc(\nabla f)}\right\rangle,\\
\text{for } \zeta_k=&\sum_{i,j: \alpha_{i}(e_j)=e_k}\sqrt{2}P(\alpha_i, e_j)-\sum_{i,j: \alpha_{i}(e_j)=-e_k}\sqrt{2}P(\alpha_i, e_j).
\end{align*}
Using (\ref{bivectoract}), we can compute, 
\begin{align*}
\zeta_{1}=&\sqrt{2}\Big(P(\alpha_1, e_2)+P(\alpha_2, e_3)+P(\alpha_3, e_4)\Big)\\
=&\nabla_1\Rc_{22}-\nabla_{2}\Rc_{12}+\nabla_{3}\Rc_{42}-\nabla_4\Rc_{32}\\
&+\nabla_1\Rc_{33}-\nabla_{3}\Rc_{13}-\nabla_{2}\Rc_{43}+\nabla_4\Rc_{23}\\
&+\nabla_1\Rc_{44}-\nabla_{4}\Rc_{14}+\nabla_{2}\Rc_{34}-\nabla_3\Rc_{24}\\
=&\nabla_1(\SS-\Rc_{11})-(\frac{1}{2}\nabla_1\SS-\nabla_1\Rc_{11})=\frac{1}{2}\nabla_{1}\SS.
\end{align*} 
Similarly we have $\zeta_{k}=\frac{1}{2}\nabla_k\SS$. We also have $\Rc(\nabla f)=\frac{1}{2}\nabla \SS$. This proves our claim.\\

In addition, it is easy to see that 
\[\left\langle{Q^{+},Q^{+}}\right\rangle=\frac{3}{2}|\nabla \SS|^2.\]

Since $\delta\WW^{+}=\frac{P^{+}}{2}+\frac{Q^+}{12}$, it follows that 
\[ \left\langle{Q^{+},\delta\WW^{+}}\right\rangle=0.\]

The statements involved N follow from analogous calculations as $$N(\alpha_i, e_j)=\left\langle{\alpha_i(e_j),\nabla{f}}\right\rangle.$$
\end{proof}

By manipulation as in the proof of Theorem \ref{intanddiv}, using Remark \ref{interactionpm} (replacing Lemmas \ref{interaction} and \ref{div}) and Lemma \ref{4dimQD0} (replacing Lemma \ref{QD0}), we immediately obtain the following result.   
\begin{corollary}
\label{intanddivpluscor}
Let $(M,g,f,\lambda)$ be a four-dimensional closed GRS. Then we have the following identity:
\begin{equation}
\int_{M}\left\langle{\WW^+, \Rc\circ \Rc}\right\rangle=4\int_{M}|\delta \WW^{+}|^2.
\end{equation}
\end{corollary}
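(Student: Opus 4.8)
The plan is to run the proof of Theorem \ref{intanddiv} line by line, replacing every tensor by its self-dual part, and simply to check that each ingredient has a legitimate $\WW^{+}$-analogue. The opening pointwise identity $\langle\WW^{+},\Rc\circ\Rc\rangle=\langle\WW^{+},\He f\circ\He f\rangle=\WW^{+}_{ijkl}f_{ik}f_{jl}$ transfers verbatim: the symmetrization step uses only the symmetries of the Kulkarni--Nomizu product and the algebraic symmetries of $\WW^{+}$, and the passage from $\He f\circ\He f$ to $\Rc\circ\Rc$ uses $\He f=\lambda g-\Rc$ together with the fact that $\WW^{+}$ is trace-free, so $\langle\WW^{+},A\circ g\rangle=0$ for any symmetric $2$-tensor $A$ (this is Remark \ref{HW0} for $\WW^{+}$, together with $\langle\WW^{+},g\circ g\rangle=0$).

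Next I integrate by parts exactly as before:
\[\int_{M}\WW^{+}_{ijkl}f_{ik}f_{jl}=-\int_{M}\nabla_{i}\WW^{+}_{ijkl}\,f_{k}f_{jl}-\int_{M}\WW^{+}_{ijkl}\,f_{k}\nabla_{i}f_{jl}.\]
For the first term I substitute $f_{jl}=\lambda g_{jl}-\Rc_{jl}$; the $\lambda g_{jl}$ piece contracts $\nabla_{i}\WW^{+}_{ijkl}$ into a trace of $\delta\WW^{+}$ and dies, leaving a term proportional to $\langle\delta\WW^{+},M^{+}\rangle$. For the second term I use $\nabla_{i}f_{jl}=-\nabla_{i}\Rc_{jl}$, recognize $\tfrac12\WW^{+}_{ijlk}f_{k}P^{+}_{ijl}$, and rewrite it through $i_{\nabla f}\WW^{+}$. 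This is where the $\pm$-versions are needed: Remark \ref{interactionpm} supplies the $\WW^{+}$-analogues of Lemmas \ref{interaction} and \ref{div} (in particular $\delta\WW^{+}$ proportional to $P^{+}+\tfrac{Q^{+}}{2(n-1)}$, and formula \eqref{equaWf} for $i_{\nabla f}\WW^{+}$), while Lemma \ref{4dimQD0} plays the role of Corollary \ref{QD0} and lets me discard every occurrence of $Q^{+}$ and $N^{+}$, since they are pointwise orthogonal to both $\delta\WW^{+}$ and $i_{\nabla f}\WW^{+}$. Assembling the two pieces gives, in general dimension, $\int_{M}\WW^{+}_{ijkl}f_{ik}f_{jl}=\tfrac{1}{n-3}\int_{M}\langle\delta\WW^{+},(n-4)M^{+}+(n-2)P^{+}\rangle$; specializing to $n=4$ kills the $M^{+}$ term, and adding back $\tfrac{Q^{+}}{6}$ (legitimate by Lemma \ref{4dimQD0}) to complete $\delta\WW^{+}$ yields $4\int_{M}|\delta\WW^{+}|^{2}$.

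The only genuine content beyond bookkeeping is confirming that the Hodge projection commutes with all the operations used — divergence, interior product with $\nabla f$, and the pointwise orthogonality relations — which is exactly what Remark \ref{interactionpm} (projection is parallel, so $\nabla$ and $\delta$ commute with it, and contractions against $g$ still annihilate $\WW^{+}$) and Lemma \ref{4dimQD0} guarantee. So the main obstacle, such as it is, is simply verifying that Lemma \ref{4dimQD0} removes precisely the same auxiliary $Q^{+},N^{+}$ terms that Corollary \ref{QD0} removed in the original argument; once that is checked the computation is identical, with the only numerics to keep an eye on being the coefficient $\tfrac{n-2}{n-3}=2$ at $n=4$. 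No new idea is required.
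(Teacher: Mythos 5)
Your proposal is correct and is essentially the paper's own proof: the paper proves Corollary \ref{intanddivpluscor} precisely by rerunning the argument of Theorem \ref{intanddiv} with everything projected to the self-dual part, invoking Remark \ref{interactionpm} in place of Lemmas \ref{interaction} and \ref{div} and Lemma \ref{4dimQD0} in place of Corollary \ref{QD0}. Your bookkeeping (trace-freeness of $\WW^{+}$, orthogonality discarding $Q^{+}$ and $N^{+}$, and the coefficient $\tfrac{n-2}{n-3}=2$ at $n=4$) matches the paper's computation.
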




\section{\textbf{Rigidity Results}}

In this section, we present conditions that imply the rigidity  of a GRS using the analysis on the framework discussed in the previous section.\\

First, Proposition \ref{Dequal0} provides a geometrical way to understand tensor D defined in (\ref{equaD}). In particular, it says that $D\equiv 0$ is equivalent to a special condition, namely, the normalization of $\nabla{f}$ (if not trivial) is an eigenvector of the Ricci tensor, and all other eigenvectors have the same eigenvalue. Such a structure will imply rigidity as the geometry of the level surface (of $f$) being well-described. 

On the other hand, Theorem \ref{intanddiv} reveals an interesting connection between the Ricci tensor and the Weyl tensor in dimension four. That allows us to obtain rigidity results using only the structure of the Ricci curvature for a GRS.

\begin{theorem}
\label{rigidRc}
Let $(M^4,g,f,\lambda)$ be a closed four-dimensional GRS. Assume that at each point the Ricci curvature has one eigenvalue of multiplicity one and another of multiplicity three, then the GRS is rigid, hence Einstein. 
\end{theorem}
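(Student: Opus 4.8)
The plan is to convert the prescribed spectral structure of $\Rc$ into the pointwise vanishing of the curvature coupling $\left\langle{\WW,\Rc\circ\Rc}\right\rangle$, feed this into the integral identity of Theorem \ref{intanddiv} to conclude that the Weyl tensor is harmonic, and then invoke the classification of closed GRS's with harmonic Weyl tensor.

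First I would record the algebraic observation, which is purely pointwise. At any point, write $\nu$ for the eigenvalue of multiplicity three, $\mu$ for the eigenvalue of multiplicity one, and $\theta$ for a unit covector spanning the $\mu$-eigenline, so that $\Rc=\nu g+(\mu-\nu)\,\theta\otimes\theta$. The symmetric rank-one tensor $\theta\otimes\theta$ satisfies $(\theta\otimes\theta)\circ(\theta\otimes\theta)=0$ (the product $\circ$ antisymmetrizes a symmetric factor against itself), so $\Rc\circ\Rc=\nu^2\, g\circ g+2\nu(\mu-\nu)\, g\circ(\theta\otimes\theta)$, and both summands are orthogonal to $\WW$ because $\WW$ is totally trace-free. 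Hence $\left\langle{\WW,\Rc\circ\Rc}\right\rangle=0$ at every point. Equivalently, by the same manipulation as in the proof of Theorem \ref{intanddiv} one has $\left\langle{\WW,\Rc\circ\Rc}\right\rangle=\sum_{ijkl}\WW_{ijkl}\Rc_{ik}\Rc_{jl}$; expanding $\Rc_{ik}\Rc_{jl}$ produces three terms in which $\WW$ is contracted with $g$, hence vanishing, plus $\sum_{ijkl}\WW_{ijkl}\theta_i\theta_j\theta_k\theta_l$, which vanishes by the antisymmetry of $\WW$ in $i,j$. Note that no global smoothness of $\theta$ is required.

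Next, since $M$ is closed, Theorem \ref{intanddiv} gives $4\int_M|\delta\WW|^2=\int_M\left\langle{\WW,\Rc\circ\Rc}\right\rangle=0$, so $\delta\WW\equiv 0$; that is, the Weyl tensor is harmonic. (Alternatively, Corollary \ref{intanddivpluscor} yields $\delta\WW^{+}\equiv 0$, and the same applied with the opposite orientation gives $\delta\WW^{-}\equiv 0$.) It remains to deduce rigidity from this. If $\lambda\le 0$, a closed GRS is already Einstein: for $\lambda=0$, integrating $\Delta_f\SS=2\lambda\SS-2|\Rc|^2$ against $e^{-f}d\mu$ forces $\Rc\equiv 0$, and the closed expanding case is classical. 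If $\lambda>0$, i.e.\ the soliton is shrinking, then $\delta\WW\equiv 0$ together with the rigidity results quoted in the Introduction (\cite{cw07, fega11, munse09}) shows the soliton is rigid, hence (being closed) Einstein.

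The computations above are routine; the only genuine insight is the first step, namely that the assumed eigenvalue multiplicities make $\left\langle{\WW,\Rc\circ\Rc}\right\rangle$ vanish identically. Consequently the argument has no obstacle of its own — its entire weight rests on the two previously established inputs, the integration-by-parts identity of Theorem \ref{intanddiv} and the cited classification of closed GRS's with harmonic Weyl tensor.
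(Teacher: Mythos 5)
Your proposal is correct and follows essentially the same route as the paper: the pointwise vanishing of $\left\langle{\WW,\Rc\circ\Rc}\right\rangle$ under the two-eigenvalue hypothesis (the paper's Lemma \ref{rigidRc1}, which you rederive via the Kulkarni--Nomizu decomposition $\Rc=\nu g+(\mu-\nu)\theta\otimes\theta$ rather than an eigenbasis computation), then Theorem \ref{intanddiv} to conclude $\delta\WW\equiv 0$, and finally the classification of closed GRS's with harmonic Weyl tensor. Your explicit handling of the cases $\lambda\le 0$ is a minor extra precision not spelled out in the paper, but the substance of the argument is the same.
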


We also find conditions that imply the vanishing of tensor $D$.

\begin{theorem}
\label{implyD}
Let $(M^n,g,f,\tau)$, $n>3$, be a GRS. Assuming one of these conditions holds:
\begin{enumerate}
\item $i_{\nabla{f}} \Rc \circ g\equiv 0$;
\item $i_{\nabla{f}} \WW \equiv 0$ and $\delta\WW(.,.,\nabla{f})=0$.
\end{enumerate}
 Then at the point $\nabla{f}\neq 0$, $D=0$. 
\end{theorem}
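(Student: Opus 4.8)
The plan is to use the explicit formula $D_{ijk} = C_{ijk} + \WW_{ijkp}\nabla^p f$ from \eqref{equaD}, together with the two decompositions in Lemmas \ref{interaction} and \ref{div}, to reduce the vanishing of $D$ to a statement about the tensors $Q$, $M$, $N$ at a point where $\nabla f \neq 0$. Recall from \eqref{equaD} that
\[
D_{ijk} = -\frac{Q_{ijk}}{2(n-1)(n-2)} + \frac{M_{ijk}}{n-2} - \frac{\SS\, N_{ijk}}{(n-1)(n-2)},
\]
so it suffices in each case to show that this particular linear combination of $Q$, $M$, $N$ vanishes at the relevant point.

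First I would handle condition (1). The hypothesis $i_{\nabla f}(\Rc\circ g)\equiv 0$ together with the third identity of Lemma \ref{interaction}, namely $(\Rc\circ g)_{ijkp}\nabla^p f = \tfrac12 Q_{ijk} - M_{ijk}$, gives immediately $\tfrac12 Q_{ijk} = M_{ijk}$. Separately, tracing or contracting this identity (or using \eqref{rcandf}, $\Rc(\nabla f) = \tfrac12\nabla\SS$, which forces $\nabla\SS$ to be proportional to $\nabla f$ and then controls $Q$ and $N$) should pin down the remaining relation among $Q$, $M$, $N$; the point is that $\Rc(\nabla f) = \lambda \nabla f$ under this hypothesis, so $M_{ijk}$ itself becomes a multiple of $N_{ijk}$ and $Q_{ijk}$ a multiple of $N_{ijk}$ via $\nabla\SS = 2\lambda\nabla f$. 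Plugging these proportionalities into the displayed formula for $D$ and checking the numerical coefficients cancel (using $\SS$ is not needed pointwise beyond the relation just derived) yields $D = 0$ at points where $\nabla f \neq 0$.

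For condition (2), the hypothesis $i_{\nabla f}\WW \equiv 0$ says exactly $\WW_{ijkp}\nabla^p f = 0$, so from $D_{ijk} = C_{ijk} + \WW_{ijkp}\nabla^p f$ we get $D = C$, the Cotton tensor. Now I would use \eqref{equaWf} of Lemma \ref{interaction}: since $\WW_{ijkp}\nabla^p f = 0$, that identity reads $0 = -P_{ijk} - \tfrac{Q_{ijk}}{2(n-2)} + \tfrac{M_{ijk}}{n-2} - \tfrac{\SS N_{ijk}}{(n-1)(n-2)}$, which expresses $P$ in terms of $Q$, $M$, $N$; combined with $C_{ijk} = P_{ijk} + \tfrac{Q_{ijk}}{2(n-1)}$ from \eqref{equaWdiv}, this already gives a formula for $C = D$ purely in $Q$, $M$, $N$. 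The extra hypothesis $\delta\WW(\cdot,\cdot,\nabla f) = 0$, i.e. $C_{ijk}\nabla^k f$-type contraction vanishes (equivalently, via Lemma \ref{div}, $C(\cdot,\cdot,\nabla f) = 0$), should be used to kill the $M$ and $N$ contributions: contracting $C$ with $\nabla f$ and using the known contractions of $Q$, $M$, $N$ with $\nabla f$ (from Lemma \ref{product}) forces the surviving combination to vanish, hence $D = C = 0$ where $\nabla f \neq 0$. Alternatively one invokes Corollary \ref{QD0}(a) and Lemma \ref{4dimQD0} to remove $Q$ and $N$ from the start and then deals only with $P$ and $M$.

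The main obstacle I anticipate is bookkeeping rather than conceptual: verifying that the numerical coefficients in the linear combinations of $P$, $Q$, $M$, $N$ genuinely cancel in all dimensions $n > 3$, and correctly tracking which contraction (with $\nabla f$, or the trace) each hypothesis controls. In particular for case (2) one must be careful that $i_{\nabla f}\WW \equiv 0$ is a pointwise tensor identity on all of $M$ (so its covariant derivatives can be used) whereas the conclusion $D = 0$ is only claimed at points with $\nabla f \neq 0$; the interplay between these, and making sure no spurious term survives at such points, is where the argument needs care.
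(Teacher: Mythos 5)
Your overall framework is the right one---both cases do reduce, via Lemmas \ref{interaction} and \ref{div}, to the vanishing of the combination \eqref{equaD}---but in each case the decisive step is missing, and it is the same step the paper supplies: one must show that $\nabla f$ is an eigenvector of $\Rc$ \emph{and} that all eigenvalues of $\Rc$ on $\nabla f^{\perp}$ coincide, and only then does $D=0$ follow, via Proposition \ref{Dequal0} ((3)$\Rightarrow$(1)), whose cancellation uses the trace identity $\zeta+(n-1)\eta=\SS$. For condition (1), the mechanism you cite for $\Rc(\nabla f)\parallel\nabla f$ is a non sequitur: \eqref{rcandf} holds identically on every GRS and forces no proportionality between $\nabla \SS$ and $\nabla f$ (and the eigenvalue is some function $\mu$, not the soliton constant $\lambda$); the correct route is Proposition \ref{genvan}(i) applied to $T=\tfrac12 Q-M$. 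More seriously, your claimed cancellation fails as stated: granting $\Rc(\nabla f)=\mu\nabla f$ one gets $Q=-2\mu N$ and, from $\tfrac12 Q=M$, also $M=-\mu N$, and substituting into \eqref{equaD} gives $D=\frac{-(n-2)\mu-\SS}{(n-1)(n-2)}\,N$, which is \emph{not} zero by purely numerical cancellation. It vanishes only after one also knows that $\Rc$ equals $-\mu$ times the identity on $\nabla f^{\perp}$, so that $\SS=\mu-(n-1)\mu$; this eigenvalue--multiplicity statement is exactly the content of Proposition \ref{genvan}(i) (the case $b\neq 0$) that your sketch skips, and it contradicts your parenthetical claim that no pointwise information on $\SS$ is needed.

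For condition (2) the gap is of the same nature. Rewriting $D=C$ in terms of $Q,M,N$ via \eqref{equaWf} only reproduces the definition \eqref{equaD}, so all the content must come from $\delta\WW(\cdot,\cdot,\nabla f)=0$; but that hypothesis only controls the finitely many components of $P+\tfrac{1}{2(n-1)}Q$ in which $\nabla f$ occupies a slot of the two-form argument, and a tensor does not vanish merely because one of its contractions against $\nabla f$ does, so it cannot by itself ``kill the $M$ and $N$ contributions'' or force $D\equiv 0$. (Also, Lemma \ref{product} gives full inner products among $P,Q,M,N$, not slot contractions with $\nabla f$, and Lemma \ref{4dimQD0} is four-dimensional while the theorem is for all $n>3$, so neither can ``remove $Q$ and $N$'' pointwise.) The paper's argument instead (i) differentiates the identity $i_{\nabla f}\WW\equiv 0$ to get $\delta\WW(\nabla f,\cdot,\cdot)=-\WW(\He f,\cdot,\cdot)=0$ (Proposition \ref{gradeigen1}), whence $\Rc(\nabla f)=\mu\nabla f$ by Lemma \ref{gradeigen}; (ii) in an eigenbasis of $\Rc$ uses $i_{\nabla f}\WW=0$ through \eqref{equaWf} to compute $P$ explicitly and then uses $\delta\WW(\cdot,\cdot,\nabla f)=0$ to conclude $\lambda_k=\frac{\SS-\lambda_1}{n-1}$ for every $k\geq 2$; and (iii) invokes Proposition \ref{Dequal0}. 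Steps (ii) and (iii)---the eigenvalue rigidity and the trace-based cancellation---are precisely what your proposal replaces with an unsupported assertion, so as written it does not prove the theorem.
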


\begin{remark} $D\equiv 0$ can be derived from other conditions such as the vanishing of the Bach tensor (cf. \cite[Lemma 4.1]{caochen11}). 
\end{remark}

\begin{remark} For GRS's, condition (2) is a slight improvement of \cite{ca12}, where the author characterizes generalized quasi-Einstein manifolds with $\delta \WW=i_{\nabla{f}}\WW=0$. 
\end{remark}


In dimension four, the result can be improved significantly. 

\begin{theorem}
\label{improved4}
  Let $(M,g,f,\lambda)$ be a four-dimensional GRS. At points where $\nabla{f}\neq 0$, then $\WW^{+}(\nabla{f},.,.,.)=0$ implies $\WW^{+}=0$.
\end{theorem}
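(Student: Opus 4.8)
The plan is to exploit the special algebraic structure of $\WW^+$ as a trace-free symmetric operator on the rank-$3$ bundle $\Lambda_2^+$, combined with the decomposition $T_pM = \nabla f \oplus \Lambda_2^+(\nabla f)$ from Lemma~\ref{decom}. Fix a point $p$ where $X:=\nabla f \neq 0$, and pick a positively-oriented orthogonal basis $\{\alpha_1,\alpha_2,\alpha_3\}$ of $\Lambda_2^+$ with $|\alpha_i|=\sqrt 2$ that puts $\WW^+$ in normal form, so $\WW^+(\alpha_i,\alpha_j)=\lambda_i\delta_{ij}$ with $\lambda_1+\lambda_2+\lambda_3=0$. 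By Lemma~\ref{decom}, after normalizing we may take $e_1 = X/|X|$ and $e_{j+1}=\alpha_j(e_1)/|\alpha_j(e_1)|$ for $j=1,2,3$, so that $\{e_i\}_{i=1}^4$ is an orthonormal frame adapted to both $X$ and the eigenbasis of $\WW^+$.

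First I would translate the hypothesis $\WW^+(X,\cdot,\cdot,\cdot)=0$, i.e.\ $i_X\WW^+=0$, into the eigenvalue data. Since $i_X\WW^+=0$ means $\WW^+(X\wedge Z,\beta)=0$ for all $Z$ and all $\beta\in\Lambda_2^+$, and since $X\wedge e_{j+1}$ is (up to scale) exactly $\alpha_j$ restricted appropriately — more precisely, writing $\alpha_j$ in terms of the $e_i$ using the table analogous to (\ref{bivectoract}), each $\alpha_j$ has a component along $e_1\wedge e_{j+1}$ — one finds that pairing $\WW^+$ against $e_1\wedge e_{k}$ forces relations among the $\lambda_i$. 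The key computation is: for each $k\in\{2,3,4\}$, the bivector $e_1\wedge e_k$ decomposes into its self-dual part $\tfrac12(e_1\wedge e_k)^+ \in \Lambda_2^+$, and expanding this in the $\{\alpha_i\}$ basis and applying $\WW^+(\cdot,\cdot)=\mathrm{diag}(\lambda_i)$ together with $i_X\WW^+=0$ yields a linear system. Because the three vectors $e_2,e_3,e_4$ cycle through all three $\alpha_i$ directions, this system is (up to the trace constraint) $\lambda_1=\lambda_2=\lambda_3$, hence all zero.

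The main obstacle — and the step I'd spend the most care on — is the bookkeeping in that last paragraph: showing that the off-diagonal vanishing $\WW^+(e_1\wedge e_k,\alpha_i)=0$ genuinely couples \emph{all} three eigenvalues rather than leaving one free. The clean way is to observe that for $\beta=e_1\wedge e_k + \ast(e_1\wedge e_k)\in\Lambda_2^+$ one has, by self-duality, $\beta=c\,\alpha_{i(k)}$ for a specific index $i(k)$ depending on $k$ (this is precisely the content of the frame adaptation $e_{j+1}=\alpha_j(e_1)/|\alpha_j(e_1)|$, so $\alpha_j \sim e_1\wedge e_{j+1} - e_{?}\wedge e_{?}$ with the complementary term also self-dual). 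Then $0=\WW^+(e_1\wedge e_k,\alpha_{i'})$ for $i'\neq i(k)$ expresses that $\lambda_{i'}$ annihilates the $\alpha_{i'}$-component of $e_1\wedge e_k$; tracking which components are nonzero across $k=2,3,4$ shows every $\lambda_i$ multiplies some nonzero quantity and must vanish. Once $\WW^+(\alpha_i,\alpha_i)=0$ for all $i$, $\WW^+\equiv 0$ at $p$ since $\{\alpha_i\}$ diagonalizes it; as $p$ was an arbitrary point with $\nabla f\neq 0$, this proves the claim. (One could alternatively package the whole argument as: $i_X\WW^+=0$ says $X$ lies in the kernel of the symmetric endomorphism $Z\mapsto \WW^+$ acting suitably, and a trace-free symmetric operator on $\Lambda_2^+\cong\mathbb{R}^3$ whose induced action kills one full "line's worth" of test bivectors through a fixed vector must vanish — but the explicit frame computation is the most transparent route.)
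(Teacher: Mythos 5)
Your argument is correct, but it is not the route the paper takes. The paper's proof is a one-line contraction: writing the hypothesis as $\sum_i f_i\WW^+_{ijkl}=0$ in a normal frame, it multiplies by $\WW^+_{ajkl}$, sums over $j,k,l$, and invokes the algebraic identity $(\WW^{+})_{ikpq}(\WW^{+})_{j}{}^{kpq}=|\WW^{+}|^2 g_{ij}$ of Lemma \ref{sumW1} to conclude $f_a|\WW^+|^2=0$ for every $a$, hence $|\WW^+|=0$ wherever $\nabla f\neq 0$. You instead argue pointwise through the quaternionic structure of $\Lambda_2^+$ and Lemma \ref{decom}: adapting the frame so that $e_1=\nabla f/|\nabla f|$ and $e_{j+1}=\alpha_j(e_1)$ for an eigenbasis $\{\alpha_j\}$ of $\WW^+$, the self-dual part of $e_1\wedge e_{j+1}$ is a nonzero multiple of $\alpha_j$, so $0=\WW^+(e_1\wedge e_{j+1},\alpha_j)=c_j\lambda_j$ with $c_j\neq 0$ kills each eigenvalue. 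This is a genuinely different (and equally valid) mechanism, and it makes transparent the underlying linear-algebra fact that for $X\neq 0$ the projections $(X\wedge Y)^+$ span $\Lambda_2^+$, so $i_X\WW^+=0$ forces $\WW^+=0$; the paper's contraction argument is shorter but hides this behind Lemma \ref{sumW1} (which is itself proved via the normal form). One small clean-up in your write-up: the informative pairings are the ``diagonal'' ones $\WW^+(e_1\wedge e_{j+1},\alpha_j)$, which give $\lambda_j=0$ directly; the pairings against $\alpha_{i'}$ with $i'\neq i(k)$ are vacuously $0=0$, and no trace constraint is needed (though invoking trace-freeness after deducing $\lambda_1=\lambda_2=\lambda_3$ would also finish). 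Your final paragraph already states the correct version, so the proof closes as you sketched.
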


As discussed in the last section, there are some similarities between taking the divergence and interior product $i_{\nabla{f}}$ of the Weyl tensor, for example, see Corollary \ref{QD0}. The following theorem is inspired by condition (1) of Theorem \ref{implyD}.

\begin{theorem}
\label{deltaRc}
Let $(M^n,g,f,\tau)$, $n>3$, be a GRS.  Then $\delta (\Rc\circ g)\equiv 0$ if and only if  the Weyl tensor is harmonic and the scalar curvature is constant. 
\end{theorem}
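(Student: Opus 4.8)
The plan is to work entirely within the framework of Section 5, translating the condition $\delta(\Rc\circ g)\equiv 0$ into a linear relation among the tensors $P$, $Q$ and then extracting the two desired consequences. By equation (\ref{equaHdiv}) in Lemma \ref{div}, we have $\nabla^p(\Rc\circ g)_{ijkp}=-P_{ijk}+\tfrac12 Q_{ijk}$, so the hypothesis is equivalent to the pointwise identity
\begin{equation*}
P_{ijk}=\tfrac12 Q_{ijk}
\end{equation*}
on all of $M$. I would first record what this gives when contracted: tracing over $i,k$ kills $P$ (since $g^{ik}P_{ijk}=g^{ik}(\nabla_i\Rc_{jk}-\nabla_j\Rc_{ik})=\nabla^k\Rc_{jk}-\nabla_j\SS=\tfrac12\nabla_j\SS-\nabla_j\SS=-\tfrac12\nabla_j\SS$ by (\ref{rcdiv})), while $g^{ik}Q_{ijk}=(n-1)\nabla_j\SS$; matching the two sides forces $-\tfrac12\nabla_j\SS=\tfrac{n-1}{2}\nabla_j\SS$, hence $\nabla\SS\equiv 0$. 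So the scalar curvature is constant. Once $\nabla\SS=0$ we have $Q\equiv 0$ by definition, and therefore $P\equiv 0$, i.e. the full curvature tensor is harmonic ($\delta\RR=0$); by (\ref{equaWdiv}) the Weyl tensor is then harmonic as well (indeed $\delta\WW=-\tfrac{n-3}{n-2}(P+\tfrac{Q}{2(n-1)})=0$). This is the forward direction and it is essentially immediate from the framework.

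For the converse, suppose $\delta\WW\equiv 0$ and $\SS$ is constant. Constancy of $\SS$ gives $Q\equiv 0$ directly, and $\delta\WW=-\tfrac{n-3}{n-2}(P+\tfrac{Q}{2(n-1)})=-\tfrac{n-3}{n-2}P$, so $\delta\WW\equiv 0$ together with $n>3$ forces $P\equiv 0$. Then $\nabla^p(\Rc\circ g)_{ijkp}=-P_{ijk}+\tfrac12 Q_{ijk}=0$, which is exactly $\delta(\Rc\circ g)\equiv 0$. So the converse is also a one-line consequence of Lemma \ref{div}.

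The only place where any care is needed is the contraction step in the forward direction: one must be sure that $g^{ik}P_{ijk}$ is computed with the correct sign and that the Bianchi/soliton identity (\ref{rcdiv}) is applied correctly, since the whole argument hinges on the numerical mismatch between the coefficients $-\tfrac12$ and $\tfrac{n-1}{2}$ (which would fail to separate the two sides only if $n-1=-1$, impossible). I do not anticipate a genuine obstacle here — the theorem is really a clean corollary of the decomposition lemmas, and the dimension hypothesis $n>3$ is used precisely to guarantee $\tfrac{n-3}{n-2}\neq 0$ so that $\delta\WW\equiv 0$ can be upgraded to $P\equiv 0$. I would present the proof in two short paragraphs mirroring the two implications, citing (\ref{equaHdiv}), (\ref{equaWdiv}), and (\ref{rcdiv}).
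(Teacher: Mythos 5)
Your proof is correct, and it reaches the conclusion by a slightly different mechanism than the paper. Both arguments start identically, using (\ref{equaHdiv}) to rewrite the hypothesis as the pointwise identity $P=\tfrac12 Q$; the difference lies in how $P=0$ and $\nabla\SS=0$ are then extracted. The paper invokes the inner-product identity $2\langle P,Q\rangle=-|\nabla\SS|^2$ of Lemma \ref{product}: substituting $Q=2P$ gives $2|P|^2=\langle P,Q\rangle=-\tfrac12|\nabla\SS|^2$, and since the left side is nonnegative while the right side is nonpositive, both $P$ and $\nabla\SS$ vanish simultaneously. You instead contract $P=\tfrac12 Q$ over the first and third indices, using only the contracted second Bianchi identity (the first equality in (\ref{rcdiv})) to get $-\tfrac12\nabla_j\SS=\tfrac{n-1}{2}\nabla_j\SS$, hence $\nabla\SS=0$, then $Q=0$ and so $P=0$; your contractions and signs are right. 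From there both proofs conclude harmonicity of $\WW$ via (\ref{equaWdiv}) (the paper cites Corollary \ref{QD0}, which amounts to the same), and both treat the converse as immediate. Your trace argument is marginally more elementary in that it bypasses the norm computations of Lemma \ref{product}, and it makes transparent that neither direction uses the soliton equation itself -- only the contracted Bianchi identity -- so the statement holds on any Riemannian manifold; the paper's sign argument has the mild advantage of delivering $P=0$ and $\nabla\SS=0$ in a single stroke without the intermediate step $Q=0$.
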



An immediate consequence of the results above (plus known classifications discussed in the Introduction) is to obtain rigidity results.

\begin{corollary}
\label{cor5dim}
 Let $(M^{n},g,f,\lambda)$, $n\geq 4$, be a complete shrinking GRS.
 
 {\bf i.} If $i_{\nabla{f}}\Rc\circ g\equiv 0$, then $(M^{n},g,f,\lambda)$ is Einstein;
 
 {\bf ii.} If $i_{\nabla{f}}\WW=0$ and $\delta\WW(.,.,\nabla{f})=0$, then $(M^{n},g,f,\lambda)$ is rigid of rank $k=0, 1, n$;
 
  {\bf iii.}If $\delta (\Rc\circ g)=0$, then $(M^{n},g,f,\lambda)$ is rigid of rank $0\leq k\leq n$.
 
\end{corollary}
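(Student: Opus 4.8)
The plan is to reduce all three parts to the known classification of rigid shrinking solitons, using the structural results of this section to manufacture the hypotheses ($D\equiv 0$, or harmonic Weyl) that trigger those classifications. A preliminary observation used throughout: in each part the hypothesis is a global tensor identity, so set $\Omega=\{p\in M:\nabla f(p)\neq 0\}$. If $\Omega=\emptyset$, then $f$ is constant, $\He f\equiv 0$, $\Rc=\lambda g$, and the soliton is Einstein (the rank-$0$ rigid case), so we may assume $\Omega\neq\emptyset$. A complete GRS is real-analytic by \cite{bando87}, hence $\Omega$ is dense, and any smooth tensor on $M$ vanishing on $\Omega$ vanishes on all of $M$; this is what promotes the conclusions of Theorems \ref{implyD} and \ref{deltaRc}, which hold only where $\nabla f\neq 0$, to global statements. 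Finally recall, from the references collected in the Introduction, that a complete shrinking GRS with $D\equiv 0$ (cf.\ \cite{caochen11,caochen12,cw11}) or with $\delta\WW\equiv 0$ (cf.\ \cite{cw07,fega11,munse09}) is rigid, i.e.\ a finite quotient of $N^{n-k}\times\mathbb{R}^k$ with $N$ Einstein of constant $\lambda>0$ and $f$ the Gaussian potential on the Euclidean factor.

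For (i): by the identity for $(\Rc\circ g)_{ijkp}\nabla^{p}f$ in Lemma \ref{interaction}, the hypothesis $i_{\nabla f}(\Rc\circ g)\equiv 0$ is the same as $\tfrac12 Q=M$. On the one hand, the first case of Theorem \ref{implyD} then gives $D=0$ on $\Omega$, hence $D\equiv 0$, so $M$ is rigid. On the other hand, tracing $\tfrac12 Q=M$ over two indices and using $\Rc(\nabla f)=\tfrac12\nabla\SS$ (identity (\ref{rcandf})) yields $\nabla\SS=-\tfrac{2\SS}{n-2}\nabla f$ on $M$. If $M$ were not Einstein it would be rigid of some rank $k$ with $1\le k\le n$; the case $k=n$ gives a flat manifold, hence Einstein, a contradiction, while for $1\le k\le n-1$ we have $\SS=(n-k)\lambda$, a positive constant, so $\nabla\SS\equiv 0$ forces $\nabla f\equiv 0$, contradicting $\Omega\neq\emptyset$. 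Therefore $M$ is Einstein.

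For (ii): the second case of Theorem \ref{implyD} gives $D=0$ on $\Omega$, hence $D\equiv 0$, so $M$ is a finite quotient of $N^{n-k}\times\mathbb{R}^k$. A direct computation of the Weyl tensor of the product gives $\WW(X,Y,X,Y)=\tfrac{(1-k)\lambda}{(n-1)(n-2)}$ for unit vectors $X$ tangent to the $\mathbb{R}^k$ factor and $Y$ tangent to $N$. Since $\nabla f$ realizes every direction of the Euclidean factor, the hypothesis $i_{\nabla f}\WW\equiv 0$ forces $(1-k)\lambda=0$ whenever $1\le k\le n-1$, which is impossible; hence $k\in\{0,1\}$, or else $k=n$ and $M$ is flat with $\WW\equiv 0$. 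This is precisely the range $k=0,1,n$. For (iii): by Theorem \ref{deltaRc}, $\delta(\Rc\circ g)\equiv 0$ is equivalent to $\delta\WW\equiv 0$ together with $\SS$ constant, so $M$ is rigid of some rank $0\le k\le n$; no further restriction holds or is expected, since every rigid shrinker has parallel Ricci tensor and hence satisfies $\delta(\Rc\circ g)\equiv 0$.

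The main obstacle is not a single delicate estimate. It is, first, the passage from the ``$\nabla f\neq 0$'' conclusions of Theorems \ref{implyD} and \ref{deltaRc} to global identities, which relies essentially on the real-analyticity of gradient Ricci solitons; and second, in parts (i) and (ii), the algebraic check that the extra hypothesis is incompatible with a nontrivial Euclidean factor in the rigid splitting, carried out via the explicit Ricci and Weyl tensors of the product models.
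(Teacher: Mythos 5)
Your proposal is correct, and its skeleton is the same as the paper's: each hypothesis forces $D\equiv 0$ (parts i, ii) or $P=\nabla\SS=0$ (part iii), the harmonic-Weyl classification of complete shrinkers then gives rigidity, and one finishes by deciding which ranks are compatible with the hypothesis. Where you genuinely differ is in that last step. For (i) the paper invokes Proposition \ref{genvan} with $T=\tfrac12 Q-M$ to constrain the Ricci spectrum directly, while you trace $\tfrac12 Q=M$ against (\ref{rcandf}) to obtain $\nabla\SS=-\tfrac{2\SS}{n-2}\nabla f$ and then exclude ranks $1\le k\le n-1$ from the constancy and positivity of $\SS$ on the rigid model; for (ii) the paper reads off from Proposition \ref{Dequal0} that $\Rc$ has one eigenvalue of multiplicity one and another of multiplicity $n-1$, whereas you compute $\WW(X,Y,X,Y)=\tfrac{(1-k)\lambda}{(n-1)(n-2)}$ on $N^{n-k}\times\mathbb{R}^k$ (the formula is correct) and test it against $i_{\nabla f}\WW\equiv 0$. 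Your versions are more computational but equally valid, and they make the incompatibility with an intermediate Euclidean factor quite transparent; the paper's route is shorter because the eigenvalue information is already packaged in its Propositions.

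Two small remarks. First, the analyticity/density preliminary is unnecessary: each of $Q$, $M$, $N$ (hence $D$) carries a factor of $\nabla f$ or of $\nabla\SS=2\Rc(\nabla f)$, so $D$ vanishes automatically at points where $\nabla f=0$ and the conclusions of Theorems \ref{implyD} and \ref{deltaRc} are already global; dropping this step also spares you the appeal to \cite{bando87} for complete solitons without a curvature bound, which is stronger than the way the paper itself uses that reference. Second, in (ii) the clause ``forces $(1-k)\lambda=0$ whenever $1\le k\le n-1$, which is impossible'' should say $2\le k\le n-1$: for $k=1$ the quantity vanishes identically, which is precisely why rank one survives, as your final list $k=0,1,n$ correctly records.
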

In particular, when the dimension is four, we have the following result.
 
\begin{corollary}
\label{cor4dim}
 Let $(M,g,f,\lambda)$ be a four-dimensional complete GRS. If $$\WW^{+}(\nabla{f},.,.,.)=0,$$ then the GRS is either Einstein or has $\WW^{+}=0$. Furthermore, in the second case, it is isometric to a Bryant soliton or Ricci flat manifold if $\lambda=0$; or is a finite quotient of $\RR^4$, $\SS^3\times \RR$, $\SS^4$ or $CP^2$ if $\lambda>0$.
\end{corollary}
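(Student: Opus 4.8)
The plan is to derive Corollary~\ref{cor4dim} by combining Theorem~\ref{improved4} with the real-analyticity of gradient Ricci solitons and the classification of four-dimensional GRS with vanishing self-dual Weyl tensor recalled in the Introduction. The argument proceeds in three steps: a dichotomy produced by the potential function, an analytic-continuation step, and an appeal to the known rigidity theorems.

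First I would invoke the fact that on a complete GRS both the metric $g$ and the potential $f$ are real-analytic (see \cite{bando87}). Consequently there are two cases. If $\nabla f\equiv 0$, then the soliton equation (\ref{GRS}) becomes $\Rc=\lambda g$, so $(M,g)$ is Einstein, which gives the first alternative. Otherwise $f$ is nonconstant, so the open set $\Omega:=\{\,p\in M:\nabla f(p)\neq 0\,\}$ is nonempty, and by analyticity of $f$ it is dense in $M$. On $\Omega$, the hypothesis $\WW^{+}(\nabla f,\cdot,\cdot,\cdot)=0$ together with Theorem~\ref{improved4} forces $\WW^{+}\equiv 0$. Since $\WW^{+}$ is a continuous (indeed real-analytic) self-adjoint operator field on $\Lambda_{2}^{+}$ for the given orientation and it vanishes on the dense set $\Omega$, it vanishes on all of $M$. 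This establishes that $(M,g,f,\lambda)$ is either Einstein or satisfies $\WW^{+}\equiv 0$.

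In the second alternative I would quote the classification of complete four-dimensional GRS with $\WW^{+}\equiv 0$ due to Chen--Wang \cite{cw11} (see the Introduction): for $\lambda=0$ the soliton is Ricci-flat or isometric to the Bryant soliton, and for $\lambda>0$ it is a finite quotient of $\RR^{4}$, $\SS^{3}\times\RR$, $\SS^{4}$ or $CP^{2}$ (no further assertion is made for $\lambda<0$). Completeness is used only here, since Theorem~\ref{improved4} is purely local; in the shrinking case one also invokes the curvature bounds available for complete four-dimensional shrinkers, which supply the hypotheses of \cite{cw11}. Putting the three steps together proves the corollary.

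The step I expect to be most delicate is the transition from ``$\WW^{+}=0$ on $\Omega$'' to ``$\WW^{+}\equiv 0$ on $M$'', and in particular the assertion that $\Omega$ is dense as soon as $f$ is nonconstant: this is precisely where real-analyticity of the soliton structure is essential, to preclude $\nabla f$ from vanishing on a nonempty open set without vanishing identically. The rest---the local implication of Theorem~\ref{improved4}, the identity $\Rc=\lambda g$ in the constant-$f$ case, and the dichotomy on the sign of $\lambda$---is routine bookkeeping once this point is secured.
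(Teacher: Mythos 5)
Your proposal is correct and follows essentially the same route as the paper, which deduces the corollary "immediately" from Theorem \ref{improved4}, the analyticity of the soliton structure \cite{bando87}, and the Chen--Wang classification \cite{cw11}; you merely spell out the dichotomy (constant $f$ gives $\Rc=\lambda g$, nonconstant $f$ gives a dense set where $\nabla f\neq 0$, hence $\WW^{+}\equiv 0$ by continuity) that the paper leaves implicit. The only caveat, shared with the paper itself, is that the curvature-boundedness needed for \cite{bando87} and for the shrinking case of \cite{cw11} is not literally contained in the hypothesis of completeness.
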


The general strategy to prove aforementioned statements is to use the framework to study the structure of the Ricci tensor.

\subsection{Eigenvectors of the Ricci curvature}

Here we study various interconnections between the eigenvectors of the Ricci curvature, the Weyl tensor, and the potential function. First, we observe the following lemma.

\begin{lemma}
\label{rigidRc1} 
Let $(M,g)$ be a Riemannian manifold. Assume that, at each point, the Ricci curvature has one eigenvalue of multiplicity one and another of multiplicity $n-1$. Then we have,
\begin{equation*}
\left\langle{\WW, \Rc\circ \Rc}\right\rangle=0.
\end{equation*}
\end{lemma}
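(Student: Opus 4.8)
The plan is to exploit the very rigid algebraic structure of the Ricci tensor under the hypothesis and to show that the contraction $\langle \WW, \Rc\circ\Rc\rangle$ depends only on the ``traceless-with-respect-to-$g$'' part of $\Rc\circ\Rc$, which in this case is proportional to $\EE\circ g$. Concretely, at a fixed point write $\Rc = \mu\, g + (\nu-\mu)\, \theta\otimes\theta$, where $\theta$ is the unit covector dual to the eigenvector with the simple eigenvalue $\nu$ and $\mu$ is the eigenvalue of multiplicity $n-1$. Equivalently $\Rc = c_1 g + c_2\, E$ where $E=\theta\otimes\theta$ is a rank-one idempotent symmetric tensor. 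The first step is to expand $\Rc\circ\Rc$ using bilinearity of the Kulkarni--Nomizu product:
\[
\Rc\circ\Rc = c_1^2\, g\circ g + 2 c_1 c_2\, g\circ E + c_2^2\, E\circ E.
\]

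The second step is to observe that each of the three summands pairs to zero against $\WW$. For $g\circ g$ and $g\circ E$ this is immediate: $\WW$ is totally trace-free, and for any symmetric $2$-tensor $A$ the contraction $\langle \WW, g\circ A\rangle$ is (up to a universal constant) a trace of $\WW$ against $A$, hence vanishes; this is exactly the mechanism used in Remark~\ref{HW0} and throughout Section~5. For the last term, $E\circ E$, note that $E = \theta\otimes\theta$ is itself rank one, so $E\circ E$ has the Kulkarni--Nomizu symmetries of a ``constant curvature'' piece built from the single covector $\theta$; in fact $(E\circ E)_{ijkl} = 2(\theta_i\theta_k\theta_j\theta_l - \theta_i\theta_l\theta_j\theta_k)$, which is antisymmetric in $i\leftrightarrow j$ but, crucially, $\theta_i\theta_j$ is symmetric, so $(E\circ E)_{ijkl}=0$ identically. (Alternatively: $E\circ E = E\circ E$ is, as a curvature-type tensor, proportional to $g'\circ g'$ for the degenerate ``metric'' $\theta\otimes\theta$, and such a tensor is totally trace-free only after one checks it is actually zero — but the direct index computation is cleanest.) Hence $\Rc\circ\Rc = c_1^2\, g\circ g + 2c_1 c_2\, g\circ E$, a pure ``$g\circ(\text{symmetric})$'' tensor, and it pairs to zero with $\WW$.

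A cleaner way to organize the same argument, which I would actually write: since $\Rc$ has only two distinct eigenvalues, $\EE = \Rc - \frac{\SS}{n}g$ is a rescaled rank-one projection, so $\EE = \kappa\, E$ for a scalar $\kappa$ and a rank-one idempotent $E$. Then $\Rc\circ\Rc - \frac{2\SS}{n}\Rc\circ g + (\text{multiple of } g\circ g) = \EE\circ\EE = \kappa^2\, E\circ E = 0$ by the index computation above. Since $\langle\WW, \Rc\circ g\rangle = 0$ and $\langle \WW, g\circ g\rangle = 0$ (traces of the trace-free $\WW$), we conclude $\langle \WW, \Rc\circ\Rc\rangle = \langle \WW, \EE\circ\EE\rangle = 0$.

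The only genuinely substantive point — and the one I would be most careful about — is the claim that $E\circ E = 0$ when $E$ is a symmetric rank-one tensor, i.e.\ that the Kulkarni--Nomizu square of a square vanishes. This is a short but slightly counterintuitive algebraic fact (the Kulkarni--Nomizu product of two generic symmetric tensors is nonzero, but $A\circ A$ for $A$ of rank one vanishes because the would-be curvature tensor it builds has vanishing ``sectional curvatures'' in every plane containing the image line and is degenerate elsewhere); I would verify it by the explicit index expansion $(E\circ E)_{ijkl} = E_{ik}E_{jl} + E_{jl}E_{ik} - E_{il}E_{jk} - E_{jk}E_{il} = 2(E_{ik}E_{jl} - E_{il}E_{jk})$ and then substitute $E_{ab} = \theta_a\theta_b$ to get $2(\theta_i\theta_k\theta_j\theta_l - \theta_i\theta_l\theta_j\theta_k) = 0$. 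Everything else is a routine consequence of $\WW$ being totally trace-free, for which the earlier sections of the paper already supply the needed identities.
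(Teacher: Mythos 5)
Your main argument is correct and complete: writing $\Rc=\mu g+(\nu-\mu)\,\theta\otimes\theta$ at a point, expanding $\Rc\circ\Rc$ by bilinearity of the Kulkarni--Nomizu product, killing the $g\circ g$ and $g\circ E$ terms by the total trace-freeness of $\WW$, and checking $(E\circ E)_{ijkl}=2(\theta_i\theta_k\theta_j\theta_l-\theta_i\theta_l\theta_j\theta_k)=0$ for the rank-one piece is a valid pointwise proof. The paper argues differently, though from the same essential ingredient: it diagonalizes $\Rc$ in an orthonormal eigenbasis, notes that only the components $\WW_{ijij}$ survive the contraction, and computes $\eta\zeta\sum_{j}\WW_{1j1j}+\zeta^{2}\sum_{1<i<j}\WW_{ijij}=0$ directly from the vanishing Ricci trace $\sum_{j}\WW_{ijij}=0$. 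Your route is more structural and coordinate-free, isolating the one nontrivial algebraic fact (the Kulkarni--Nomizu square of a rank-one symmetric tensor vanishes), whereas the paper's is a two-line index computation; both rest only on the trace-freeness of $\WW$, and your version makes clearer that the multiplicity-$(1,n-1)$ hypothesis is exactly what forces $\Rc$ into the span of $g$ and a rank-one tensor.

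One genuine slip, and it sits in the reorganization you say you would actually write: $\EE=\Rc-\frac{\SS}{n}g$ is traceless, so it can never equal $\kappa E$ for a rank-one idempotent $E$ (which has trace one); in fact $\EE=(\nu-\mu)\bigl(\theta\otimes\theta-\tfrac{1}{n}g\bigr)$, and $\EE\circ\EE\neq 0$ in general, since its expansion contains nonzero $g\circ(\theta\otimes\theta)$ and $g\circ g$ terms. The identity $\left\langle{\WW,\EE\circ\EE}\right\rangle=0$ is still true, but only after pairing with $\WW$ and discarding those $g\circ(\cdot)$ terms, i.e.\ by exactly the mechanism of your first organization. So either drop the claim $\EE=\kappa E$ (and with it $\EE\circ\EE=0$) or replace it by the expansion just indicated; as written, that paragraph is incorrect even though your first argument, and hence the lemma, stands.
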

\begin{proof}
Without loss of generality, we can choose a basis $\{e_{i}\}_{i=1}^n$ of $T_{p}M$ consisting of eigenvectors of $\Rc$, namely $\Rc_{11}=\eta$ and $\Rc_{ii}=\zeta$ for $i=2,...,n$. Then, 
\begin{align}
\left\langle{\WW, \Rc\circ \Rc}\right\rangle&=\sum_{i<j; k<l}\WW_{ijkl}\Rc_{ik}\Rc_{jl}\\
&=\sum_{i<j}\WW_{ijij}\Rc_{ii}\Rc_{jj}=\eta\zeta \sum_{j}\WW_{1j1j}+\zeta^2\sum_{1<i<j}\WW_{ijij}.
\end{align}
We observe that,
\begin{align}
\sum_{j>1}\WW_{ijij}&=-\WW_{1i1i},\\
2\sum_{1<i<j}\WW_{ijij}&=\sum_{i>1}\sum_{j>1}\WW_{ijij}=-\sum_{i}\WW_{1i1i}=0.
\end{align}
The result then follows.
\end{proof}

Next, a consequence of our previous framework (on P, Q, M, and N) is the following characterization about the condition  $\Rc(\nabla f)=\mu \nabla f$.

\begin{lemma}
\label{gradeigen}
Let $(M,g,f,\lambda)$ be a GRS. Then the followings are equivalent:
\begin{enumerate}
\item $\Rc(\nabla{f})=\mu \nabla{f}$;
\item $Q(.,.,\nabla{f})=0$;
\item $M(.,.,\nabla{f})=0$;
\item $\delta\WW(\nabla{f},.,.)=0$; 
\item $\delta \HH(\nabla{f},.,.)=0$.
\end{enumerate}
\end{lemma}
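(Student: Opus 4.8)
\textbf{Proof plan for Lemma \ref{gradeigen}.}

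The plan is to prove the equivalences by establishing a cycle of implications, exploiting the explicit formulas from Definition \ref{framedef} and Lemmas \ref{interaction} and \ref{div}. I would organize the argument around the trivial equivalences first and then close the loop. The implications $(1)\Leftrightarrow(2)$ and $(1)\Leftrightarrow(3)$ are essentially immediate from the definitions: from \eqref{equaQ} we have $Q(X\wedge Y,\nabla f)=2(X,\nabla f)\Rc(Y,\nabla f)-2(Y,\nabla f)\Rc(X,\nabla f)$, and by \eqref{rcandf} we know $\Rc(\nabla f)=\tfrac12\nabla\SS$; so $Q(.,.,\nabla f)$ vanishes precisely when $\Rc(\nabla f)$ is parallel to $\nabla f$ (splitting any $Y$ into its component along $\nabla f$ and its orthogonal complement, the bracket vanishes for all $X,Y$ iff $\Rc$ sends $\nabla f$ back into the line $\mathbb R\nabla f$). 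Similarly, from \eqref{equaM}, $M(\alpha,Z)=-\langle \alpha(\nabla f),\Rc(Z)\rangle$, so $M(.,.,\nabla f)=0$ means $\langle(X\wedge Y)\nabla f,\Rc(\nabla f)\rangle=0$ for all $X,Y$, i.e. $\Rc(\nabla f)\perp(X\wedge Y)\nabla f$; since the bi-vectors $(X\wedge Y)\nabla f$ span the orthogonal complement of $\nabla f$, this again forces $\Rc(\nabla f)\in\mathbb R\nabla f$. This settles $(1)\Leftrightarrow(2)\Leftrightarrow(3)$.

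Next I would bring in the Weyl and Hessian-composition tensors. By \eqref{equaWdiv}, $\delta\WW=-\tfrac{n-3}{n-2}C$ where $C=P+\tfrac{1}{2(n-1)}Q$, so $\delta\WW(\nabla f,.,.)=0$ is equivalent to $P(\nabla f,.,.)+\tfrac{1}{2(n-1)}Q(\nabla f,.,.)=0$ — but here one must be careful about which slot carries $\nabla f$. The convention in \eqref{equaWdiv} writes $\nabla^p\WW_{ijkp}$, i.e. the divergence is taken in the last slot and the output is a tensor on $\Lambda_2\otimes TM$; evaluating $\delta\WW(\nabla f,.,.)$ means plugging $\nabla f$ into the \emph{first} (the $\Lambda_2$) argument appropriately, or rather contracting as in the statement. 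The cleanest route: show directly that $P(\nabla f\wedge Y,Z)$ already vanishes identically on a GRS. Indeed, by \eqref{equaP}, $P(X\wedge Y,Z)=-\RR(X,Y,Z,\nabla f)$, so $P(\nabla f\wedge Y,Z)=-\RR(\nabla f,Y,Z,\nabla f)$, and by the symmetries of $\RR$ and \eqref{equaP} again this equals $-P(Z\wedge Y,\nabla f)$ up to sign — hmm, this needs the antisymmetry $\RR(\nabla f, Y, Z, \nabla f) = -\RR(Y,\nabla f, Z, \nabla f)$ and the pair symmetry. The point I want is that the $P$-contribution to $\delta\WW(\nabla f,.,.)$ reduces to a multiple of $P$ evaluated with $\nabla f$ in a curvature slot, which by \eqref{equaP1} is itself controlled. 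So $\delta\WW(\nabla f,.,.)=0$ collapses to a condition purely on $Q(\nabla f,.,.)$, giving $(4)\Leftrightarrow(2)$. The analogous computation using \eqref{equaHdiv}, $\nabla^p\HH_{ijkp}=P_{ijk}-\tfrac12 Q_{ijk}$, gives $(5)\Leftrightarrow(2)$ by the same reduction.

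The main obstacle I anticipate is bookkeeping with the index positions and the contraction conventions \eqref{productconv}: the tensors $P,Q,M,N$ take a bi-vector in one slot and a vector in another, and statements like ``$\delta\WW(\nabla f,.,.)=0$'' versus ``$\delta\WW(.,.,\nabla f)=0$'' are genuinely different and must be matched carefully against the definitions — getting a sign or a slot wrong would break the equivalence. To manage this I would do everything in a normal orthonormal frame, writing each condition as the vanishing of an explicit expression in $\nabla_i\Rc_{jk}$, $\Rc_{ij}$, $\nabla_i f$, and then repeatedly use \eqref{rcdiv}, \eqref{rcandf}, and the contracted second Bianchi identity \eqref{equaP1} (which says $P_{ijk}=-\RR_{ijkp}\nabla^p f=-\nabla^p\RR_{ijkp}$) to eliminate the $P$-terms. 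Once the $P$-terms are gone, each of conditions $(4)$ and $(5)$ is manifestly a nonzero constant multiple of the statement $Q(\nabla f,.,.)=0$, and the cycle closes. A secondary subtlety is ensuring the reduction of the $P$-contribution really is identically zero rather than merely proportional to one of the other conditions; this follows because $\RR(\nabla f, Y, \nabla f, Z)$ is symmetric in the exchange of the two $\nabla f$ slots by the pair symmetry of $\RR$ yet antisymmetric within each pair, which pins down exactly how it contributes and shows it does not introduce any new constraint beyond what $Q$ already encodes.
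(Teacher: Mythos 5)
Your treatment of $(1)\Leftrightarrow(2)\Leftrightarrow(3)$ is correct and is essentially the paper's argument. The gap is in $(2)\Leftrightarrow(4)$ and $(2)\Leftrightarrow(5)$: you flag the slot bookkeeping as the main danger, but you never actually resolve it, and the resolution you do sketch is false. With the paper's convention $(\delta \WW)_{kij}=\nabla^{p}\WW_{pkij}$, the pair symmetry of $\WW$ together with \eqref{equaWdiv} gives $\delta\WW(X,Y,Z)=\tfrac{n-3}{n-2}\bigl(P(Y,Z,X)+\tfrac{1}{2(n-1)}Q(Y,Z,X)\bigr)$; that is, the first slot of $\delta\WW$ feeds into the \emph{vector} (third) slot of $P$ and $Q$, not into the bi-vector slot. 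Consequently $\delta\WW(\nabla f,\cdot,\cdot)$ involves $P(Y,Z,\nabla f)=-\RR(Y,Z,\nabla f,\nabla f)=0$, which vanishes for the purely algebraic reason that $\RR$ is antisymmetric in its last two arguments (equivalently $P_{ijk}\nabla^{k}f=\RR_{jikp}\nabla^{p}f\,\nabla^{k}f=0$); only the term $\tfrac{1}{2(n-1)}Q(\cdot,\cdot,\nabla f)$ survives, and its vanishing is exactly condition (2). The same one-line mechanism applied to \eqref{equaHdiv} gives $(2)\Leftrightarrow(5)$. This is the paper's proof, and the slot identification is genuinely needed, not optional bookkeeping.

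By contrast, your ``cleanest route'' is to show that $P(\nabla f\wedge Y,Z)=-\RR(\nabla f,Y,Z,\nabla f)$ vanishes identically on a GRS; it does not. Up to sign this is the radial curvature $\RR(\nabla f,Y,\nabla f,Z)$, which is nonzero already on a round sphere with any nonconstant $f$, and no symmetry of $\RR$ kills it: pair symmetry only yields symmetry in $(Y,Z)$, so the closing claim of your last paragraph (that the symmetries ``pin down'' this term and show it introduces no constraint beyond what $Q$ encodes) is incorrect. If $\nabla f$ really landed in the bi-vector slot, the $P$-contribution would be a genuine curvature condition and $(4)$ would not reduce to $(2)$. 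The fix is to carry out the index identification above once, in a normal frame as you propose, after which the $P$-term disappears identically and the cycle of equivalences closes as you intended.
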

\begin{proof}

We\rq{}ll show that $(1)\leftrightarrow (2)$, $(1)\leftrightarrow (3)$, $(2)\leftrightarrow (4)$, and $(2)\leftrightarrow (5)$.

For $(2)\rightarrow (1)$: Let $\alpha\in \Lambda_{2}$, we have $0=Q(\alpha,\nabla{f})=-2(\alpha(\nabla{f}), \Rc(\nabla{f}))$. Since $\alpha$ can be arbitrary, $\alpha(\nabla{f})$ can realize any vector in the complement of $\nabla{f}$ in $TM$. Therefore, $\Rc(\nabla{f})=\mu \nabla{f}$.

For $(1)\rightarrow (2)$: $Q(\alpha,\nabla{f})=-2(\alpha(\nabla{f}), \Rc(\nabla{f}))=-2(\alpha(\nabla{f}), \mu\nabla{f})=0$ because $\alpha(\nabla{f})\perp \nabla{f}$.

$(1)$ being equivalent to $(3)$ follows from an identical argument as above.

$(2)$ being equivalent to $(4)$ follows from
\begin{align*}
\delta\WW(X,Y,Z)=&\frac{n-3}{n-2}P(Y,Z,X)+\frac{n-3}{2(n-1)(n-2)}Q(Y,Z,X),\\
P(Y,Z,\nabla{f})=&-\RR(Y,Z,\nabla{f},\nabla{f})=0.
\end{align*}

$(2)$ being equivalent to $(5)$ follows from
\begin{align*}
\delta\HH(X,Y,Z)=&-P(Y,Z,X)+\frac{1}{2}Q(Y,Z,X),\\
P(Y,Z,\nabla{f})=&-\RR(Y,Z,\nabla{f},\nabla{f})=0.
\end{align*} 
\end{proof}

Furthermore, the rigidity of these operators $Q, M, N$ is captured by the following result.

\begin{proposition}
\label{genvan}
Let $(M^n,g,f,\tau)$, $n>3$, be a GRS and $T=a Q+ b M+ cN$ for some real numbers a,b,c.
 
{\bf i.} Assume that $T\equiv 0$. If $a\neq 0$ then $\Rc(\nabla{f})=\mu \nabla f$; moreover, if $\nabla{f}\neq 0$ and $b \neq 0$, then all other eigenvectors must have the same eigenvalue;

{\bf ii.} In dimension four, if $T_{\mid \Lambda_{2}^{+} \otimes TM}\equiv 0$ then $T\equiv 0$.  
\end{proposition}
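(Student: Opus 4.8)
The plan is to first recast $T=aQ+bM+cN$ in a form that makes the Hodge decomposition transparent. Combining Definition~\ref{framedef} with $\Rc(\nabla f)=\frac12\nabla\SS$ (see \eqref{rcandf}), one checks that for any $\alpha\in\Lambda_{2}$ and $Z\in TM$
\[
T(\alpha,Z)=\langle\alpha(Z),u\rangle-b\,\langle\alpha(\nabla f),\Rc(Z)\rangle,\qquad
u:=c\,\nabla f-2a\,\Rc(\nabla f)=c\,\nabla f-a\,\nabla\SS .
\]
Since $\nabla\SS=2\Rc(\nabla f)$ vanishes wherever $\nabla f=0$, the tensor $T$ is automatically zero at such points; so in both parts I argue pointwise and may assume $\nabla f\neq0$.

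For part~i I would feed into $T\equiv0$ the bivectors $X\wedge Y$ with $X,Y\perp\nabla f$: there $M(X\wedge Y,\cdot)=N(X\wedge Y,\cdot)=0$ by \eqref{equaM}--\eqref{equaN}, leaving $0=T(X\wedge Y,Z)=2a\bigl(\langle X,Z\rangle\Rc(Y,\nabla f)-\langle Y,Z\rangle\Rc(X,\nabla f)\bigr)$. Since $n>3$, for a given nonzero $Y\perp\nabla f$ one can pick a unit $X$ orthogonal to both $\nabla f$ and $Y$ and take $Z=X$; as $a\neq0$ this forces $\Rc(Y,\nabla f)=0$ for all $Y\perp\nabla f$, i.e. $\Rc(\nabla f)=\mu\nabla f$ with $\mu=\Rc(\nabla f,\nabla f)/|\nabla f|^{2}$. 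For the additional claim I would instead feed in $\nabla f\wedge Y$ with $Y\perp\nabla f$; using $\Rc(\nabla f)=\mu\nabla f$ the three summands become $Q=-2\mu|\nabla f|^{2}\langle Y,Z\rangle$, $M=|\nabla f|^{2}\Rc(Y,Z)$, $N=|\nabla f|^{2}\langle Y,Z\rangle$, so $b\,\Rc(Y,Z)=(2a\mu-c)\langle Y,Z\rangle$, and $b\neq0$ makes every eigenvector of $\Rc$ orthogonal to $\nabla f$ share the eigenvalue $(2a\mu-c)/b$.

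For part~ii the strategy is to show that $T|_{\Lambda_{2}^{+}\otimes TM}\equiv0$ forces $\Rc$ to be rotationally symmetric about $\nabla f$ — $\Rc(\nabla f)=\mu\nabla f$ and $\Rc=\rho\,\mathrm{Id}$ on $(\nabla f)^{\perp}$ with $\rho$ pinned down by the coefficients — and then to observe that for such a $\Rc$, writing $u=\kappa\nabla f$, splitting $Z=Z_{\parallel}+Z_{\perp}$ into parts along and orthogonal to $\nabla f$, and using $\langle\alpha^{-}(\nabla f),Z_{\parallel}\rangle=0$, one gets $T(\alpha^{-},Z)=-(\kappa+b\rho)\langle\alpha^{-}(\nabla f),Z_{\perp}\rangle$, whose coefficient vanishes by the relation $\rho=-\kappa/b$ forced below; hence $T(\alpha^{-},Z)=0$ for all $\alpha^{-}\in\Lambda_{2}^{-}$, and with the hypothesis and $\Lambda_{2}=\Lambda_{2}^{+}\oplus\Lambda_{2}^{-}$ this yields $T\equiv0$. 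To obtain the rotational symmetry: if $b=0$ then $T(\alpha,\cdot)=\langle\alpha(\cdot),u\rangle$, and since a nonzero self-dual $2$-form in dimension four is nondegenerate ($\alpha^{+}\wedge\alpha^{+}$ is a nonzero multiple of the volume form), $T|_{\Lambda_{2}^{+}}=0$ at once forces $u=0$, hence $T\equiv0$. If $b\neq0$, setting $Z=\nabla f$ in $T(\alpha^{+},\nabla f)=0$ and using Lemma~\ref{decom} (so that $\{\alpha^{+}(\nabla f):\alpha^{+}\in\Lambda_{2}^{+}\}=(\nabla f)^{\perp}$) gives $u-b\,\Rc(\nabla f)\parallel\nabla f$, i.e. $(2a+b)\Rc(\nabla f)\parallel\nabla f$; when $2a+b\neq0$ this is $\Rc(\nabla f)=\mu\nabla f$, and then running $T(\alpha^{+},Z)=0$ for general $Z$ through Lemma~\ref{decom} once more reduces it to $\kappa Z+b\,\Rc(Z)\parallel\nabla f$ for every $Z$, which on $(\nabla f)^{\perp}$ forces $\Rc=-(\kappa/b)\,\mathrm{Id}$ there.

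The one genuinely delicate point — and the step I expect to be the main obstacle — is the degenerate sub-case $2a+b=0$ of part~ii, which does occur for natural choices of $(a,b,c)$. There the contraction $Z=\nabla f$ yields no information, and $T(\alpha^{+},\cdot)=0$ must be rearranged into $\{\Rc,\alpha^{+}\}\nabla f=\kappa\,\alpha^{+}\nabla f$ for all $\alpha^{+}\in\Lambda_{2}^{+}$; one then feeds in the quaternionic basis $\{\alpha_{i}\}$ of \eqref{baseplus} (with $\alpha_{i}^{2}=-\mathrm{Id}$ and $\alpha_{i}\alpha_{j}=\alpha_{k}$) and the orthogonal frame $f_{i}:=\alpha_{i}(\nabla f)$ of $(\nabla f)^{\perp}$ to express $\Rc(f_{i})$ as a scalar multiple of $f_{i}$ plus a cross-product term in the $f_{i}$'s and a $\nabla f$-component; symmetry of $\Rc$ then kills the antisymmetric pieces, recovering $\Rc(\nabla f)=\mu\nabla f$ and $\Rc|_{(\nabla f)^{\perp}}=\rho\,\mathrm{Id}$, after which the proof closes as above. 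Throughout, the bookkeeping of the scalar relations among $a,b,c,\mu,\rho,\kappa$ — so that the final coefficient $\kappa+b\rho$ of $\langle\alpha^{-}(\nabla f),Z_{\perp}\rangle$ really is zero — is where care is needed.
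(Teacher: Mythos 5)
Your argument is correct, and for part ii it takes a genuinely different route from the paper's. Part i is in the paper's spirit (testing $T$ on well-chosen bivectors), but your test elements are slightly more robust: $X\wedge Y$ with $X,Y\perp\nabla f$ kills $M$ and $N$ outright and isolates $aQ$, so you never need the factor $-(2a+b)$ in $T(\alpha,\nabla f)=-(2a+b)\left\langle{\alpha(\nabla f),\Rc(\nabla f)}\right\rangle$ to be nonzero, which the paper's corresponding step uses tacitly (and which vanishes exactly for $T=\frac{1}{2}Q-M$, the tensor relevant to Theorem \ref{implyD}(1)). For part ii the paper argues purely linear-algebraically, with no case analysis and no structural conclusion on $\Rc$: fixing the second slot at a unit eigenvector $e_{i}$ of $\Rc$, identity (\ref{genvan11}) collapses $T(\cdot,e_{i})$ to $\alpha\mapsto\left\langle{\alpha(e_{i}),V_{i}}\right\rangle$ with $V_{i}$ independent of $\alpha$; vanishing on $\Lambda_{2}^{+}$ forces $V_{i}\parallel e_{i}$ by Lemma \ref{decom}, hence $T(\beta,e_{i})=0$ for all $\beta\in\Lambda_{2}^{-}$ too, and linearity in the second slot finishes. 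You instead derive the full eigenstructure of $\Rc$ ($\Rc(\nabla f)=\mu\nabla f$, $\Rc=\rho\,\mathrm{Id}$ on $(\nabla f)^{\perp}$) from the self-dual vanishing and then check $T(\alpha^{-},\cdot)=0$ directly; this does go through, including the degenerate case $2a+b=0$, where the anticommutator relation on the quaternionic frame plus symmetry of $\Rc$ kills the cross terms and the closing coefficient $\kappa+b\rho$ is indeed zero in every case. The trade-off: the paper's proof is two lines and coefficient-free, while yours is longer and needs three cases, but it yields strictly more information, showing that for $b\neq 0$ the self-dual vanishing alone already forces the rotationally symmetric form of $\Rc$ about $\nabla f$, which the paper only obtains afterwards by combining parts i and ii.
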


\begin{proof}
Let $\{e_{i}\}_{i=1}^n$ be an orthonormal basis which consists of eigenvector of $\Rc$ with corresponding eigenvalues $\lambda_{i}$. Then we have
\begin{align}
T(\alpha, e_{i})&= aQ(\alpha,e_{i})+bM(\alpha,e_{i})+cN(\alpha,e_{i}) \nonumber \\
&=-2a\left\langle{\alpha(e_{i}),\Rc(\nabla{f})}\right\rangle-b\left\langle{\alpha(\nabla{f}),\Rc(e_{i})}\right\rangle+ c\left\langle{\alpha(e_{i}),\nabla{f}}\right\rangle \nonumber \\
&=-2a\left\langle{\alpha(e_{i}),\Rc(\nabla{f})}\right\rangle+b\left\langle{\nabla{f},\alpha(\lambda_{i}e_{i})}\right\rangle+c\left\langle{\alpha(e_{i}),\nabla{f}}\right\rangle \nonumber \\
\label{genvan11}
&=\left\langle{\alpha(e_{i}),-2a\Rc(\nabla{f})+b\lambda_{i}\nabla{f}+c\nabla{f}}\right\rangle.
\end{align}

{\bf i.} Without loss of generality, we can assume $\nabla f\neq 0$. Since $T(\alpha, e_{i})= 0$ for arbitrary $\alpha$ and $e_{i}$,  
\[T(\alpha, \nabla{f})=0=\left\langle{\alpha(\nabla{f}),\Rc(\nabla{f})}\right\rangle=Q(\alpha,\nabla{f}).\]
By Lemma \ref{gradeigen},  $e_{1}=\frac{\nabla{f}}{|\nabla{f}|}$ is an eigenvector of $\Rc$. Plugging into (\ref{genvan11}) yields,
\[T(\alpha, e_{i})=(-2a\lambda_{1}+b\lambda_{i}+c)\left\langle{\alpha(e_{i}),\nabla{f}}\right\rangle.\]
Therefore, $-2a\lambda_{1}+b\lambda_{i}+c=0$. Hence, as $b\neq 0$, all other eigenvectors have the same eigenvalue.

{\bf ii.}    In dimension four, fix a unit vector $e_{i}$ and note that $T(\alpha,e_{i})=0$ for any $\alpha \in \Lambda_{2}^{+}$. By Lemma \ref{decom} and Remark \ref{derem}, $T(\beta,e_{i})=0$ for all $\beta \in \Lambda_{2}^{-}$. As $e_{i}$ is arbitrary the result then follows. 

\end{proof}


Recall that tensor $D$ is a special linear combination of $M, N, Q$. Therefore, we obtain the following geometric characterization.

\begin{proposition}
\label{Dequal0}
Let $(M^n,g)$, $n>3$, be a Riemannian manifold and D defined as in (\ref{equaD}). Then the followings are equivalent:
\begin{enumerate}
\item $D\equiv 0$;
\item The Weyl tensor under the conformal change $\tilde{g}=e^{\frac{-2f}{n-2}}g$ is harmonic;
\item Either $\nabla{f}=0$ and Cotton tensor $C_{ijk}=0$, or $\nabla{f}$ is an eigenvector of $\Rc$ and all other eigenvectors have the same eigenvalue.  
\end{enumerate}
 \end{proposition}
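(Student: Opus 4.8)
The plan is to establish the two equivalences $(1)\Leftrightarrow(2)$ and $(1)\Leftrightarrow(3)$ separately, since they rely on different ingredients: conformal geometry for the former and the algebraic framework of Proposition \ref{genvan} for the latter. Throughout I use the two expressions for $D$ recorded in \eqref{equaD}: on the one hand $D_{ijk}=aQ_{ijk}+bM_{ijk}+cN_{ijk}$ with $a=-\frac{1}{2(n-1)(n-2)}$, $b=\frac{1}{n-2}$ and $c=-\frac{\SS}{(n-1)(n-2)}$; on the other hand $D_{ijk}=C_{ijk}+\WW_{ijkp}\nabla^{p}f$, where $C$ is the Cotton tensor of \eqref{equaWdiv}. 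Note $a\neq 0$ and $b\neq 0$ for $n>3$, and that the Weyl term $\WW_{ijkp}\nabla^{p}f$ vanishes wherever $\nabla f=0$.

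For $(1)\Leftrightarrow(2)$: recall that for $n>3$ a metric has harmonic Weyl tensor precisely when its Cotton tensor vanishes, since $\delta\WW=-\frac{n-3}{n-2}C$ by \eqref{equaWdiv}. Under a conformal change $\tilde{g}=e^{2\omega}g$ the $(3,1)$ Weyl tensor is conformally invariant while the Cotton tensor acquires a Weyl correction; the standard transformation law (cf.\ \cite{besse}) gives $\tilde{C}_{ijk}=C_{ijk}+(n-2)\WW_{ijkp}\nabla^{p}\omega$. Substituting $\omega=-\frac{f}{n-2}$ yields exactly $\tilde{C}_{ijk}=C_{ijk}+\WW_{ijkp}\nabla^{p}f=D_{ijk}$. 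Hence $\tilde{g}$ has harmonic Weyl tensor if and only if $D\equiv 0$.

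For $(1)\Rightarrow(3)$: suppose $D\equiv 0$. At a point where $\nabla f\neq 0$, since $a\neq 0$ and $b\neq 0$, Proposition \ref{genvan}(i) applies to $T=D$ and shows that $\nabla f$ is an eigenvector of $\Rc$ and all remaining eigenvectors share a common eigenvalue. At a point where $\nabla f=0$ the Weyl term vanishes, so $0=D=C$ there; thus the stated dichotomy holds. For $(3)\Rightarrow(1)$: at a point where $\nabla f=0$ and $C=0$ we get $D=C+\WW_{ijkp}\nabla^{p}f=0$ immediately. At a point where $\Rc(\nabla f)=\lambda_{1}\nabla f$ with $e_{1}=\nabla f/|\nabla f|$ and every other eigenvector has eigenvalue $\zeta$, the identity \eqref{genvan11} from the proof of Proposition \ref{genvan} gives
\[
D(\alpha,e_{i})=\left\langle{\alpha(e_{i}),\,-2a\,\Rc(\nabla f)+b\lambda_{i}\nabla f+c\nabla f}\right\rangle=(-2a\lambda_{1}+b\lambda_{i}+c)\left\langle{\alpha(e_{i}),\nabla f}\right\rangle,
\]
using $\Rc(\nabla f)=\lambda_{1}\nabla f$. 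For $i=1$ this vanishes since $\alpha(e_{1})$ is orthogonal to $e_{1}$, hence to $\nabla f$; for $i\geq 2$ we have $\lambda_{i}=\zeta$, and substituting the values of $a,b,c$ together with $\SS=\lambda_{1}+(n-1)\zeta$ gives $-2a\lambda_{1}+b\zeta+c=0$. Since the eigenvectors span $T_{p}M$ and $\alpha$ is arbitrary, $D$ vanishes at that point, so $D\equiv 0$.

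The only genuinely delicate point is matching constants in $(1)\Leftrightarrow(2)$: one must recover the Cotton tensor of $\tilde{g}=e^{-2f/(n-2)}g$ as $D$ exactly — correct sign and overall factor — which requires care with the conformal transformation law and with the index and sign conventions relating $\WW_{ijkp}\nabla^{p}f$ to $i_{\nabla f}\WW$. Everything else is either a direct appeal to the already-established Proposition \ref{genvan} or a short computation in a Ricci eigenbasis.
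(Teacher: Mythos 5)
Your proposal is correct, and for two of the three implications it follows the paper's own route: your $(1)\Leftrightarrow(2)$ is the same conformal computation (the paper phrases it through the transformation of $\delta\WW$ in Lemma \ref{divWconf} rather than of the Cotton tensor, but these are identical statements since $C_{ijk}=\tfrac{n-2}{n-3}(\delta\WW)_{kij}$), and your $(3)\Rightarrow(1)$ computation via \eqref{genvan11}, the orthogonality $\left\langle{\alpha(e_1),\nabla f}\right\rangle=0$, and $\SS=\lambda_{1}+(n-1)\zeta$ is literally the paper's. The genuine difference is $(1)\Rightarrow(3)$: the paper simply cites Proposition 3.2 and Lemma 4.2 of \cite{caochen11}, whereas you deduce it internally from Proposition \ref{genvan}(i) (using $a\neq0$, $b\neq 0$) at points where $\nabla f\neq 0$, and from $D=C+i_{\nabla f}\WW$ at points where $\nabla f=0$. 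This makes the proposition self-contained, which is a real gain; the only caveat is that Proposition \ref{genvan} is stated for constant coefficients while $c=-\SS/\big((n-1)(n-2)\big)$ is a function, but since its proof is purely pointwise this is harmless (the paper itself applies \eqref{genvan11} to $D$ in the converse direction).

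One sign slip should be repaired in $(1)\Leftrightarrow(2)$: as written, the law $\tilde C_{ijk}=C_{ijk}+(n-2)\WW_{ijkp}\nabla^{p}\omega$ combined with $\omega=-f/(n-2)$ gives $C_{ijk}-\WW_{ijkp}\nabla^{p}f$, not $C_{ijk}+\WW_{ijkp}\nabla^{p}f$. In the paper's conventions the correct transformation is $\tilde C_{ijk}=C_{ijk}-(n-2)\WW_{ijkp}\nabla^{p}\omega$, which follows from Lemma \ref{divWconf} together with $C_{ijk}=\tfrac{n-2}{n-3}(\delta\WW)_{kij}$ and $\WW_{ijkp}\nabla^{p}f=-\WW(\nabla f,e_k,e_i,e_j)$; with that sign the substitution $\omega=-f/(n-2)$ does yield $\tilde C=D$. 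So your conclusion is right, but the quoted law and the substitution are mutually inconsistent as stated --- exactly the constant-and-sign matching you yourself flagged as the delicate point, and the cleanest fix is to derive the law from Lemma \ref{divWconf} as the paper does.
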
  
 
 \begin{proof} 
 We shall show  {\bf $(1)\leftrightarrow (2)$}, {\bf $(1)\rightarrow (3)$} and  {\bf $(3)\rightarrow (1)$}.
 
 For {\bf $(1)\leftrightarrow (2):$} By equation (\ref{equaD}) and (\ref{equaWdiv}), we have
\[ D_{ijk}=C_{ijk}+\WW_{ijkp}\nabla^{p}f=\frac{n-2}{n-3}(\delta \WW)_{kij}-\WW(\nabla{f}, e_{k}, e_{i}, e_{j}).\]

Thus, $D\equiv 0$ is equivalent to 
\begin{equation*}
\label{Dequal01}
\delta\WW(X,Y,Z)-\frac{n-3}{n-2}\WW(\nabla{f},X,Y,Z)=0.
\end{equation*}

Under the conformal transofrmation $\tilde{g}=u^2 g$ (see the appendix), $\widetilde{\WW}=u^2 \WW$, and
\[ \delta \widetilde{\WW}(X,Y,Z)=\delta\WW(X,Y,Z)+(n-3)\WW(\frac{\nabla{u}}{u},X,Y,Z).\]

The result then follows from the last two equation. 

The statement {\bf $(1)\rightarrow (3)$} follows from \cite[Proposition 3.2 and Lemma 4.2]{caochen11}.

For {\bf $(3)\rightarrow (1)$:} $\forall a, b, c,$ let $T=aQ+bM+cN$. For any $\alpha\in \Lambda_{2}$ and $e_{i}$ a unit tangent vector, by (\ref{genvan11}), we have
 \begin{align*}
 T(\alpha, e_{i})&=\left\langle{\alpha(e_{i}),-2a\Rc(\nabla{f})+b\lambda_{i}\nabla{f}+c\nabla{f}}\right\rangle.
 \end{align*}
 For the tensor D, 
 \begin{align*}
 a=&\frac{-1}{2(n-1)(n-2)},\\
 b=&\frac{1}{n-2},\\
 c=&\frac{-\SS}{(n-1)(n-2)}.
 \end{align*}
 
  If $\nabla f=0$ then $T\equiv 0$, hence $D\equiv 0$. If $\nabla{f}\neq 0$, then there exist $e_{1}=\frac{\nabla{f}}{|\nabla{f}|}$ and $\{e_{i}\}_{i=2}^{n}$, eigenvectors of $\Rc$, with eigenvalues $\zeta, \eta$, respectively. Then, 
 \[T(\alpha, e_{i})=\left\langle{\alpha(e_{i}), (-2a\zeta+b\eta+c)\nabla{f}}\right\rangle.\]

 Since $\zeta+(n-1)\eta=\SS$, with given values of a, b, c above, it follows that $-2a\zeta+b\eta+c=0$. Thus, $D\equiv 0$.
  \end{proof}
 
 \begin{remark}
 Note that our formulas are different from \cite[2.19]{Derd83} by a sign convention. 
 \end{remark}
\begin{remark} Under that conformal change of the metric, the Ricci tensor is given by
 \begin{align*}
 \widetilde{\Rc}=&\Rc+\text{Hess}f+\frac{1}{n-2}df\otimes df+\frac{1}{n-2}(\Delta f-|\nabla{f}|^2)g\\
 =&\frac{1}{n-2}df\otimes df +\frac{1}{n-2}(\Delta f-|\nabla{f}|^2+(n-2)\lambda)g.
 \end{align*}
 Therefore, at each point, $\widetilde{\Rc}$ has at most two eigenvalues. 
 Furthermore, since $\tilde{g}$ has harmonic Weyl tensor, its Schouten tensor 
 \begin{equation*}
 \widetilde{\text{Sc}}=\frac{1}{n-2}(\widetilde{\Rc}-\frac{1}{2(n-1)}\tilde{\SS}\tilde{g})
 \end{equation*}
 is a Codazzi tensor with at most two eigenvalues. Using the splitting results for Riemannian manifolds admitting such a tensor gives another proof of results in \cite{caochen11}. This method is inspired by \cite{ca12}.
\end{remark}

Now we investigate several conditions which will imply that $\Rc(\nabla{f})=\mu \nabla{f}$. 

%

\begin{proposition}
\label{gradeigen1}
Let $(M^n,g,f,\tau)$, $n>3$, be a GRS. Assuming one of these conditions holds:
\begin{enumerate}
\item $i_{\nabla{f}}\WW \equiv 0$;
\item  $\delta \WW^{+}=0$ if $n=4$.
\end{enumerate}
 Then $\Rc(\nabla{f})=\mu \nabla{f}$.  
\end{proposition}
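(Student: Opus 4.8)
The plan is to handle the two cases separately but both through the bridge lemmas already established, namely Lemma \ref{gradeigen} and Lemma \ref{interaction} (together with Remark \ref{interactionpm} in the four-dimensional case). For case (1), assume $i_{\nabla f}\WW\equiv 0$, i.e. $\WW_{ijkp}\nabla^p f=0$. From equation (\ref{equaWf}) in Lemma \ref{interaction}, this reads
\[
0=-P_{ijk}-\frac{Q_{ijk}}{2(n-2)}+\frac{M_{ijk}}{n-2}-\frac{\SS N_{ijk}}{(n-1)(n-2)}.
\]
Now contract this identity with $\nabla f$ in the last slot (equivalently, evaluate on $\alpha\wedge\nabla f$-type arguments). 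The key point is that $P(\cdot,\cdot,\nabla f)=0$ automatically, since $P_{ijk}\nabla^k f=\RR_{jikp}\nabla^p f\nabla^k f=0$ by antisymmetry of $\RR$ in the last two slots paired against the symmetric $\nabla^p f\nabla^k f$; likewise $N(\cdot,\cdot,\nabla f)=0$ by definition of $N$. What survives is a relation of the form $c_1 Q(\cdot,\cdot,\nabla f)+c_2 M(\cdot,\cdot,\nabla f)=0$ with explicit nonzero constants. Then I would invoke the identity $M(\alpha,\nabla f)=-\langle\alpha(\nabla f),\Rc(\nabla f)\rangle$ and $Q(\alpha,\nabla f)=-2\langle\alpha(\nabla f),\Rc(\nabla f)\rangle$, so both are proportional to $\langle\alpha(\nabla f),\Rc(\nabla f)\rangle$; in fact $Q(\cdot,\cdot,\nabla f)=2M(\cdot,\cdot,\nabla f)$, so the surviving combination forces $M(\cdot,\cdot,\nabla f)=0$ unless the coefficients cancel — and I would check the coefficients do not cancel (this is the small computation to verify). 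Once $M(\cdot,\cdot,\nabla f)=0$, Lemma \ref{gradeigen}, equivalence $(3)\Leftrightarrow(1)$, gives $\Rc(\nabla f)=\mu\nabla f$.

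For case (2), $n=4$ with $\delta\WW^+=0$: here I would use that $\delta\WW^+=\frac{1}{2}P^+ + \frac{1}{12}Q^+$ (the four-dimensional specialization of equation (\ref{equaWdiv}) restricted to the self-dual part, as used in the proof of Lemma \ref{4dimQD0}). So $\delta\WW^+=0$ gives $P^+ = -\frac{1}{6}Q^+$. Pairing this with $Q^+$ and using the two facts from Lemma \ref{4dimQD0}'s proof, namely $\langle P^+,Q^+\rangle=-\frac14|\nabla\SS|^2$ and $\langle Q^+,Q^+\rangle=\frac32|\nabla\SS|^2$, yields $-\frac14|\nabla\SS|^2 = -\frac16\cdot\frac32|\nabla\SS|^2 = -\frac14|\nabla\SS|^2$, which is a tautology and not directly useful. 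So instead I would argue: $\delta\WW^+=0$ combined with Corollary \ref{QD0}-type orthogonality forces $Q^+=0$ (since $\langle Q^+,\delta\WW^+\rangle=0$ always and $\delta\WW^+ = \tfrac12 P^+ + \tfrac{1}{12}Q^+ = 0$ means $P^+ = -\tfrac16 Q^+$; then $0=\langle Q^+, P^+ +\tfrac16 Q^+\rangle$... ). The cleaner route: from $P^+=-\frac16 Q^+$ and $\langle P^+,Q^+\rangle=-\tfrac14|\nabla\SS|^2$ we get $-\tfrac14|\nabla\SS|^2 = -\tfrac16\langle Q^+,Q^+\rangle = -\tfrac16\cdot\tfrac32|\nabla\SS|^2$, consistent but empty — so I must instead contract $\delta\WW^+=0$ directly with $\nabla f$: $\delta\WW^+(\nabla f,\cdot,\cdot)=0$ is then immediate, and Lemma \ref{gradeigen} $(4')\Leftrightarrow(1)$ (the $\WW^+$-analogue, valid by the self-dual decomposition and $P^+(\nabla f,\cdot,\cdot)=0$, which holds since $P^+$ inherits $P(Y,Z,\nabla f)=-\RR(Y,Z,\nabla f,\nabla f)=0$ after projection) gives $\Rc(\nabla f)=\mu\nabla f$.

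The main obstacle I anticipate is \emph{case (2)}: unlike the full Weyl tensor, $\delta\WW^+$ does not separately contain a clean $P^+$ and $Q^+$ piece that I can evaluate on $\nabla f$ without first knowing that $P^+(\cdot,\cdot,\nabla f)$ projects correctly — I need the fact that the $\Lambda^+_2$-projection commutes appropriately with the interior product by $\nabla f$, which is exactly the content of Remark \ref{interactionpm} and the computations in Lemma \ref{4dimQD0}. So the crux is to carefully check that $\delta\WW^+ = \tfrac12 P^+ + \tfrac{1}{12}Q^+$ and that $P^+(Y,Z,\nabla f)=0$, after which the argument closes exactly as in Lemma \ref{gradeigen}. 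Case (1) is more routine: the only real content is verifying the numerical coefficients of the surviving $Q$ and $M$ terms do not conspire to vanish identically, which a direct substitution of the constants from (\ref{equaWf}) settles.
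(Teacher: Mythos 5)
Your case (2), once you abandon the tautological pairing and instead evaluate at $\nabla f$, is essentially the paper's own argument: for $\alpha\in\Lambda_2^{+}$ one has $\delta\WW(\cdot,\alpha)=\delta\WW^{+}(\cdot,\alpha)=\tfrac12P(\alpha,\cdot)+\tfrac1{12}Q(\alpha,\cdot)$ (because $\delta\WW^{-}(\cdot,\alpha)=0$ for self-dual $\alpha$), and $P(X\wedge Y,\nabla f)=-\RR(X,Y,\nabla f,\nabla f)=0$, so $\delta\WW^{+}=0$ forces $Q(\alpha,\nabla f)=0$ for all self-dual $\alpha$; then Lemma \ref{decom} (self-dual forms applied to $\nabla f$ span $\nabla f^{\perp}$) and the argument of Lemma \ref{gradeigen} give $\Rc(\nabla f)=\mu\nabla f$. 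The two checks you flag as the crux are exactly what the paper verifies, so that half is fine.

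Case (1), however, has a genuine gap, located precisely at the step you deferred: the coefficients \emph{do} cancel. At $Z=\nabla f$ one has $Q(\alpha,\nabla f)=-2\left\langle{\alpha(\nabla f),\Rc(\nabla f)}\right\rangle=2M(\alpha,\nabla f)$, while the coefficients in (\ref{equaWf}) are $-\tfrac{1}{2(n-2)}$ for $Q$ and $\tfrac{1}{n-2}$ for $M$, so the surviving combination is $-\tfrac{1}{2(n-2)}\cdot 2M(\alpha,\nabla f)+\tfrac{1}{n-2}M(\alpha,\nabla f)\equiv 0$. This cancellation is in fact forced: the left-hand side of (\ref{equaWf}) contracted with $\nabla^{k}f$ is $\WW_{ijkp}\nabla^{p}f\,\nabla^{k}f$, which vanishes identically by antisymmetry of $\WW$ in its last two slots, so contracting the last slot with $\nabla f$ produces $0=0$ whether or not $i_{\nabla f}\WW=0$; no pointwise algebraic contraction of this type can see the hypothesis. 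The hypothesis must be used \emph{differentially}: since $\WW(e_i,\nabla f,e_k,e_l)\equiv 0$ holds identically, differentiate it and contract to obtain $\delta\WW(\nabla f,e_k,e_l)=\sum_i\nabla_i\bigl[\WW(e_i,\nabla f,e_k,e_l)\bigr]-\sum_i\WW(e_i,\nabla_i\nabla f,e_k,e_l)=0-f_{ij}\WW_{ijkl}=0$, the last term vanishing because $\He{f}$ is symmetric while $\WW$ is antisymmetric in $i,j$. With $\delta\WW(\nabla f,\cdot,\cdot)=0$ in hand, Lemma \ref{gradeigen}, equivalence $(4)\Leftrightarrow(1)$, yields $\Rc(\nabla f)=\mu\nabla f$; this divergence argument is the paper's route and is the ingredient missing from your case (1).
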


\begin{proof} The idea is to find a connection of each condition with Lemma \ref{gradeigen}.

{\bf Assuming (1):} We claim that $\delta\WW(\nabla{f},.,.)=0$. 

Choosing a normal local frame $\{e_{i}\}_{i=1}^n$, we have:
\begin{align*}
\delta\WW(\nabla{f},e_{k},e_{l})=& \sum_{i}(\nabla_{i}\WW)(e_{i}, \nabla{f}, e_{k}, e_{l})\\
=&\sum_{i}\nabla_{i} \WW(e_{i}, \nabla{f}, e_{k}, e_{l})-\sum_{i}\WW(e_{i}, \nabla_{i}\nabla{f}, e_{k}, e_{l})\\
=&0-\WW(\text{Hess}f,e_{k},e_{l}).
\end{align*}

Since $\He{f}$ is symmetric and $\WW$ is anti-symmetric, 
$\delta\WW(\nabla{f},.,.)=0$. The result then follows.\\

{\bf Assuming (2):} First recall
\[ \delta\WW(X,Y,Z)=\frac{1}{2}C(Y,Z,X)=\frac{1}{2}P(Y\wedge Z,X)+\frac{1}{12}Q(Y\wedge Z, X). \]

$\forall \alpha \in \Lambda^{2}_{+}$, since 
\[ \delta \WW^{-}(X,\alpha)=\nabla_{i}\WW^{-}(e_{i}\wedge X, \alpha)=0, \]

we have 
\begin{equation*}
\label{eigen1}
 \delta(\WW)(X,\alpha)= \delta(\WW^{+})(X,\alpha)=\frac{1}{2}P(\alpha,X)+\frac{1}{12}Q(\alpha, X). 
\end{equation*}

Since $0=\RR(Y,Z,\nabla{f},\nabla{f})=-P(Y\wedge Z, \nabla{f})$ and $\delta \WW^{+}=0$, hence $Q(\alpha, \nabla{f})=0$. The desired statement follows from Lemmas \ref{decom} and \ref{gradeigen}.

\end{proof}

\subsection{Proofs of Rigidity Theorems}

\begin{proof}{\bf (Theorem \ref{rigidRc})}

By Lemma \ref{rigidRc1}, we have $$\int_{M}\WW(\Rc\circ \Rc)=0.$$ Theorem \ref{intanddiv}, hence, implies that $\delta \WW\equiv 0$. Then by the rigidity result for harmonic Weyl tensor discussed in the Introduction, the result follows.
\end{proof}


\begin{proof}{\bf (Theorem \ref{implyD})}.

{\bf Assuming (1):} We observe that 
\[\Rc\circ g(X,Y,Z,\nabla{f})=\frac{1}{2}Q(X,Y,Z)-M(X,Y,Z).\]

Therefore, the result follows from Lemma \ref{genvan} and Proposition \ref{Dequal0}.\\

{\bf Assuming (2):} By Proposition \ref{gradeigen1}, $e_{1}=\frac{\nabla{f}}{|\nabla{f}|}$ is a unit eigenvector.  Let $\{e_{i}\}_{i=1}^n$ be an orthonomal basis of $\Rc$ with eigenvalues $\lambda_{i}$.
By (\ref{equaWf}) and $\WW(\nabla{f},.,.,.)=0$, 
\[P=-\frac{Q}{2(n-2)}+\frac{M}{(n-2)}-\frac{\SS N}{(n-1)(n-2)}.\]

Thefore,  
\begin{align}
\label{ran1}
P(i,j,k)&=\frac{|\nabla{f}|}{n-2}\Big[\lambda_{1}(\delta_{jk}\delta_{1i}-\delta_{ik}\delta_{j1})-\lambda_{k}(\delta_{j1}\delta_{ik}-\delta_{i1}\delta_{jk})-\frac{S}{n-1}(\delta_{jk}\delta_{1i}-\delta_{ik}\delta_{j1})\Big]\nonumber \\
&=\frac{|\nabla{f}|}{n-2}(\delta_{jk}\delta_{1i}-\delta_{ik}\delta_{j1})(\lambda_{1}+\lambda_{k}-\frac{S}{n-1}).
\end{align}

Using the assumption $\delta\WW(.,.,\nabla{f})=0$, we obtain that $$(P+\frac{1}{2(n-1)}Q)(\nabla{f}, .,.)=0.$$

Combining with (\ref{ran1}) yields,
\[P(1,k,k)=-\frac{1}{2(n-1)}Q(1,k,k)=\frac{\lambda_{1}|\nabla{f}|}{(n-1)}=\frac{|\nabla{f}|}{n-2}(\lambda_{1}+\lambda_{k}-\frac{S}{n-1}).\]

Thus $\lambda_{2}=\lambda_{3}=\lambda_{4}=\frac{S-\lambda_{1}}{n-1}$. Proposition \ref{Dequal0} then concludes the argument.\\


\end{proof}

The proof of Theorem \ref{deltaRc} follows from a similar argument. 

\begin{proof}{\bf (Theorem \ref{deltaRc})}

By equation (\ref{equaHdiv}),  $\delta (\Rc\circ g)=0$ implies $P-\frac{Q}{2}=0$. Thus, by Lemma \ref{product},
\[2|P|^2=2\left\langle{P, \frac{Q}{2}}\right\rangle=-\frac{|\nabla\SS|^2}{2}.\]

Hence $P=0=\nabla\SS$. It then follows from Corollary \ref{QD0} that $\delta\WW=\delta\SS=0$. The converse is obvious. 
\end{proof}

\begin{proof}{(\bf Theorem \ref{improved4})} 

Using a normal local frame, we can rewrite the assumption as,
\[ \sum_{i} f_{i}\WW^{+}_{ijkl}=0.\]
We pick an arbitrary index a and multiply both sides with $\WW_{ajkl}$ to arrive at, 
\[\sum_{i}f_{i}\WW_{ijkl}^{+}\WW^{+}_{ajkl}=0.\]
Applying identity (\ref{sumW}) yields,
\begin{align*}
0& =\sum_{jkl}\sum_{i}f_{i}\WW_{ijkl}^{+}\WW^{+}_{ajkl}\\
&=\sum_{i}f_{i}\sum_{jkl}\WW_{ijkl}^{+}\WW^{+}_{ajkl}\\
&=\sum_{i}f_{i}|\WW^{+}|^2 g_{ia}=f_{a}|\WW^{+}|^2.
 \end{align*} 
Since index a is arbitrary, we have $\nabla{f}=0$ or $|\WW^{+}|=0$.
\end{proof}

\begin{proof}({\bf Corollary \ref{cor5dim}}) 

By Theorem \ref{implyD} and Theorem \ref{deltaRc}, each condition implies $D\equiv 0$. Then, \cite[Lemma 4.2]{caochen11} further implies that $\delta \WW=0$. It follows, from classification results for harmonic Weyl tensor as discussed in the Introduction, that the manifold must be rigid. We now look at each case closely and observe that not all ranks can arise. 
 
{\bf i.} In this case, Lemma \ref{genvan} reveals that $\lambda_{0}-\lambda_{i}=0$ with $\Rc(\nabla{f})=\lambda_{0}\nabla_{f}$, and $\lambda_{i}$ is any other eigenvalue of $\Rc$. Therefore, the manifold structure must be Einstein.   

{\bf ii.} In this case, since $D\equiv 0$ implies $\Rc$ has at most two eigenvalues with one of multiplicity 1 and another of $n-1$. So k can only be $0, 1, n$.

 {\bf iii.} In this case, there is no obvious obstruction, so all rank can arise. 
   
\end{proof}

\begin{proof}({\bf Corollary \ref{cor4dim}})

 
The statement follows immediately from Theorem \ref{improved4}, \cite[Theorems 1.1, 1.2] {cw11}, and the analyticity of a GRS with bounded curvature \cite{bando87}.
\end{proof}

\section{\textbf{Appendix}}

In this appendix, we collect a few formulas that are related to this paper, they follow from direct computation.

\subsection{Conformal Change Calculation}In this subsection, we state the change of covariant derivative of the Weyl tensor and Bochner-Weitzenb\"ock type formula, with respect to the  conformal transformation of a metric. 
 
We first fix our notation. Let $(M^n,g)$ be a smooth Riemannian manifold and $u=e^f$ be a smooth positive function on M. A conformal change is defined by:
\begin{equation}
\label{conf}
\tilde{g}=e^{2f}=u^2 g.
\end{equation}
Then, for any tensor $\mathfrak{D}$ with respect to g, the corresponding for $\tilde{g}$ is denoted by $\widetilde{\mathfrak{D}}$. \\

We can calculate the transformation of the covariant derivative. For fixed $X,Y,Z$, 
\begin{align*} 
2e^{2f}(\widetilde{\nabla}_{X}Y,Z)_{g} =&2(\widetilde{\nabla}_{X}Y,Z)_{\tilde{g}}= X(Y,Z)_{\tilde{g}}+Y(Z,X)_{\tilde{g}}- Z(X,Y)_{\tilde{g}} \nonumber \\
&-(Y,[X,Z])_{\tilde{g}}-(Z[Y,X])_{\tilde{g}} +(X[Z,Y])_{\tilde{g}}
\\
=&2X(f)e^{2f}(Y,Z)_{g}+2Y(f)e^{2f}(Z,X)_g\\
&-2Z(f)e^{2f}(X,Y)_g+2e^{2f}(\nabla_{X}Y,Z)_{g}. \nonumber
\end{align*}
Thus, 
\begin{equation}
\label{confcov}
\widetilde{\nabla}_{X}Y=\nabla_{X}Y+X(f)Y+Y(f)X-(X,Y)_g\nabla f .
\end{equation}
Consequently, with the convention of $a \doteqdot \nabla^2 f-df\otimes df+\frac{1}{2}|\nabla f|^{2}g$, we have
\begin{align*}
\widetilde{\RR}=& e^{2f}\RR-e^{2f}a\circ g,
\\
\widetilde{\RR}^{l}_{ijk}&=\RR^{l}_{ijk}-a^{l}_{i}g_{jk}-a_{jk}\delta^{l}_{i}+a_{ik}\delta^{l}_{j}+a^{l}_{j}g_{ik},\nonumber\\
d\widetilde{\mu}=&e^{nf}d\mu,
\\
\widetilde{\triangle}h=&e^{-2f}\Big(\triangle h+(n-2)\nabla^{k}f\nabla_{k}h\Big),
\\
\widetilde{W}=& e^{2f}W,
\\
\widetilde{Rc}=& Rc-(n-2)a-\Big(\triangle f+\frac{n-2}{2}|\nabla{f}|^2\Big)g,
\\
\widetilde{\SS}=& e^{-2f}\Big(\SS-2(n-1)\triangle f-(n-2)(n-1)|\nabla f|^{2}\Big)
\\
=& e^{-2f}\Big(\SS-\frac{4(n-1)}{n-2}e^{-\frac{n-2}{2}f}\triangle (e^{\frac{n-2}{2}f})\Big) \text{ when $n>2$}.
\end{align*}

Now restricting our attention to dimension four, we arrive at
\begin{align*}
\widetilde{\SS}=& u^3(-6\Delta_{g}+\SS)u,\\
\widetilde{\WW}_{\tilde{a}\tilde{b}\tilde{c}\tilde{d}}&=u^{-4}\widetilde{\WW}_{abcd}=u^{-2}\WW_{abcd},\\
\widetilde{\Delta}&=u^{-2}(\Delta-2\frac{\nabla{u}}{u}\nabla),\\
\text{det}\widetilde{\WW_{+}}&=u^{-6}\text{det}\WW_{+}.
\end{align*}
\begin{lemma}
\label{divWconf} The divergence of the Weyl tensor under the above conformal change is given by,
\[\widetilde{\delta} \widetilde{\WW}(X,Y,Z)=\delta\WW(X,Y,Z)+(n-3)\WW(\frac{\nabla u}{u},X,Y,Z).\]
\end{lemma}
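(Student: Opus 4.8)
The plan is to compute $\widetilde{\delta}\widetilde{\WW}$ directly from its definition in a $\tilde{g}$-orthonormal frame and convert everything back to $g$-quantities. First I would set up notation: let $\{e_i\}$ be a local $g$-orthonormal frame, so that $\{\tilde{e}_i = u^{-1} e_i\}$ is $\tilde{g}$-orthonormal. By definition,
\[
\widetilde{\delta}\widetilde{\WW}(X,Y,Z)=\sum_i (\widetilde{\nabla}_{\tilde{e}_i}\widetilde{\WW})(\tilde{e}_i,X,Y,Z).
\]
Since $\widetilde{\WW}=u^2\WW$ as a $(4,0)$-tensor (recorded in the conformal change list above), I would expand $\widetilde{\nabla}_{\tilde{e}_i}(u^2\WW)$ using the product rule, producing a term where $\widetilde{\nabla}$ hits $u^2$ and a term where it hits $\WW$. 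The second term requires the difference tensor between $\widetilde{\nabla}$ and $\nabla$, which is exactly $(\ref{confcov})$: $\widetilde{\nabla}_X Y - \nabla_X Y = X(f)Y + Y(f)X - g(X,Y)\nabla f$, with $f=\log u$, i.e. $\nabla f = \nabla u / u$.

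The main computational step is to feed $(\ref{confcov})$ into the four "Christoffel correction" terms that appear when $\widetilde{\nabla}$ acts on the $(4,0)$-tensor $\WW$ in each of its four slots, plus the slot coming from the differentiation direction $\tilde{e}_i$. I expect a systematic cancellation: because $\WW$ is totally trace-free, every correction term that contracts a metric factor $g(\cdot,\cdot)$ against $\WW$ over the summation index $i$ vanishes, and the terms proportional to $X(f),Y(f),Z(f)$ contracted with the index $i$ also vanish by trace-freeness. What survives, after collecting the $n$-dependent combinatorial factors (one slot contributes $n$ from the contraction with $\tilde{e}_i$, the others contribute $-1$ each, and the $u^2$-derivative term contributes another copy), is precisely a single term of the form $(n-3)\,\WW(\nabla f, X, Y, Z)$. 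The remaining piece assembles into $\delta\WW(X,Y,Z)$ after accounting for the overall $u^{-2}$ from rescaling the frame against the $u^2$ in $\widetilde{\WW}$, so the $u$-powers cancel cleanly and no conformal weight is left over, consistent with the dimensions on both sides.

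The step I expect to be the main obstacle is the bookkeeping of the combinatorial coefficient $n-3$: one must carefully track the sign and multiplicity of each of the five correction terms (four tensor slots plus the derivative direction), use the first Bianchi symmetry and the pair symmetries of $\WW$ to identify which contractions coincide, and confirm that the trace-free property of $\WW$ kills exactly the right terms. This is routine but error-prone; organizing it as "$(+1)$ from the $u^2$-factor, $(+1)$ from contracting the derivative slot, and $(-1)$ from each of the three remaining non-contracted slots that see $X(f),Y(f),Z(f)$" and noting the contracted-slot corrections die by trace-freeness gives $1+1-3+\ldots$ — one checks the arithmetic yields $n-3$ once the $n$ from $\sum_i g(\tilde e_i,\tilde e_i)$ in the metric-correction term is included. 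Once the coefficient is pinned down, the identity follows immediately.
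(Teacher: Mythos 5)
Your route is exactly the paper's own: the paper proves this lemma by the very computation you outline, once in a $g$-normal coordinate frame and once in the rescaled frame $\tilde{e}_i=u^{-1}e_i$, in both cases expanding $\widetilde{\nabla}\widetilde{\WW}$ with $\widetilde{\WW}=u^2\WW$ and the difference tensor $\widetilde{\nabla}-\nabla$. So the only question is whether your bookkeeping of the correction terms is right, and as stated it is not. Put $V=\nabla u/u$ and trace over $\tilde{e}_i$. The terms proportional to $e_i(f)$ contribute $+2-4-1=-3$ copies of $\WW(V,X,Y,Z)$ (the $+2$ from differentiating $u^2$, one $-1$ from each of the four slot corrections, and one further $-1$ from the extra correction in the contracted slot), while $\sum_i g(e_i,e_i)\WW(V,X,Y,Z)$ contributes $+n$; the terms $X(f)\,\WW(e_i,e_i,Y,Z)$, $Y(f)\sum_i\WW(e_i,X,e_i,Z)$, $Z(f)\sum_i\WW(e_i,X,Y,e_i)$ vanish, the first by antisymmetry and the other two by trace-freeness. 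This already gives the coefficient $n-3$ \emph{provided} the remaining metric corrections $\sum_i g(e_i,X)\WW(e_i,V,Y,Z)=\WW(X,V,Y,Z)$, $\sum_i g(e_i,Y)\WW(e_i,X,V,Z)=\WW(Y,X,V,Z)$ and $\sum_i g(e_i,Z)\WW(e_i,X,Y,V)=\WW(Z,X,Y,V)$ are disposed of.

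Here is the gap: these three terms do \emph{not} vanish by trace-freeness over the summation index, as you claim — after summation each is simply $\WW$ with $V$ moved into an interior slot, and is nonzero in general. What is true is that their \emph{sum} vanishes, and this requires the first Bianchi identity together with the pair symmetries of $\WW$ (equivalently, $\WW(Y,X,V,Z)+\WW(Z,X,Y,V)=\WW(V,X,Y,Z)=-\WW(X,V,Y,Z)$); this is precisely the cancellation the paper performs in the last line of its computation. So the final formula is rescued only by a fact your stated rules do not prove. Your itemized arithmetic is also off: the $u^2$-factor contributes $+2$, not $+1$; the contracted slot contributes $-2$ in addition to the $+n$; and the slots seeing $X(f),Y(f),Z(f)$ contribute $0$, the $-1$'s coming instead from the $e_i(f)$ pieces. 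In short, the plan coincides with the paper's proof, but the crucial combinatorial step — which you yourself flag as the main obstacle — is resolved incorrectly as written; inserting the Bianchi-symmetry cancellation of the three leftover interior-slot terms (and correcting the tally to $2-4-1+n=n-3$) repairs the argument.
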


Next, we calculate the conformal change of the norm of the covariant derivative of the Weyl tensor.

\begin{lemma} Let $(M,g)$ be a four-dimensional Riemmanian manifold, and $\tilde{g}=u^2 g$, for some positive smooth function $u$. Then we have,
\label{confcov}
  \begin{equation}
 |\widetilde{\nabla} \widetilde{\WW}|^2=u^{-6}|\nabla \WW|^2+18u^{-8}|\nabla u|^2|\WW|^2-10u^{-7}\nabla{u}\nabla|\WW|^2+16\left\langle{\delta \WW, i_{\nabla u}\WW}\right\rangle .
 \end{equation}
\end{lemma}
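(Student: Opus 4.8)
The plan is to compute $\widetilde{\nabla}\widetilde{\WW}$ directly from the transformation law for the Levi--Civita connection displayed above, and then expand the pointwise norm while tracking powers of $u$; the whole argument is algebraic, with no integration. Write $f=\log u$, so $f_i=\nabla_i u/u$ and $|\nabla f|^2=u^{-2}|\nabla u|^2$. The difference tensor of Christoffel symbols is $\Gamma^{k}_{ij}=\delta^{k}_{i}f_{j}+\delta^{k}_{j}f_{i}-g_{ij}f^{k}$, and since $\widetilde{\WW}_{ijkl}=e^{2f}\WW_{ijkl}$ as a $(4,0)$--tensor, applying this correction to each of the four tensor slots gives
\begin{align*}
\widetilde{\nabla}_{m}\widetilde{\WW}_{ijkl}=e^{2f}\Big(&\nabla_{m}\WW_{ijkl}-2f_{m}\WW_{ijkl}-f_{i}\WW_{mjkl}-f_{j}\WW_{imkl}-f_{k}\WW_{ijml}-f_{l}\WW_{ijkm}\\
&+g_{mi}(i_{\nabla f}\WW)_{jkl}+(\text{three further }g_{m\bullet}\text{ terms})\Big),
\end{align*}
where the four ``diagonal'' pieces $\delta^{p}_{m}$ of the $\Gamma$--terms contribute $-4f_m\WW_{ijkl}$, of which two cancel against the $+2f_m\WW_{ijkl}$ coming from $\nabla_m e^{2f}$, and the four ``$g_{m\bullet}f^{p}$'' pieces contract $\nabla f$ into a slot of $\WW$; by the pair symmetry and antisymmetries of $\WW$ each of the last four terms is $\pm g_{m\bullet}$ times a component of $i_{\nabla f}\WW$. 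Getting the signs and slot placements in these last four terms exactly right is the first point to be careful about.

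Next I would form $|\widetilde{\nabla}\widetilde{\WW}|^{2}$ by contracting all five indices of the $(5,0)$--tensor above with $\tilde g^{-1}=u^{-2}g^{-1}$, which contributes a factor $u^{-10}$; together with the $(e^{2f})^{2}=u^{4}$ already present this yields a global factor $u^{-6}$. Thus $|\widetilde{\nabla}\widetilde{\WW}|^{2}=u^{-6}$ times the $g$--contraction of the square of the bracket, and everything reduces to expanding that square. The terms split into three groups: the pure term $|\nabla\WW|^{2}$; ``linear'' cross terms with one factor of $\nabla f$ and one of $\nabla\WW$; and ``quadratic'' terms with two factors of $\nabla f$.

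The quadratic terms — the cross products among $-2f_m\WW_{ijkl}$, the block $-f_{i}\WW_{mjkl}-f_{j}\WW_{imkl}-f_{k}\WW_{ijml}-f_{l}\WW_{ijkm}$, and the $g_{m\bullet}$--block — each collapse to a multiple of $|\nabla f|^{2}|\WW|^{2}$, using the dimension--four trace identity $\WW_{ikpq}\WW_{j}{}^{kpq}=|\WW|^{2}g_{ij}$ (the full--Weyl form of Lemma \ref{sumW1}, established in its proof) together with $|i_{\nabla f}\WW|^{2}=|\nabla f|^{2}|\WW|^{2}$. The linear-in-$\nabla f$ terms are all cross products of $\nabla_m\WW_{ijkl}$ with the $\nabla f$-linear blocks: the cross term with $-2f_m\WW_{ijkl}$ gives $-4f^{m}\nabla_{m}|\WW|^{2}$ outright, of the form $\nabla f\cdot\nabla|\WW|^{2}$; the cross terms with the four-term block can be organized, by the Weyl symmetries, into contractions of the shape $f^{i}(\nabla^{m}\WW_{ijkl})\WW_{m}{}^{jkl}$, which I rewrite using $\WW_{ijkl}\WW_{m}{}^{jkl}=|\WW|^{2}g_{im}$ and the product rule $\nabla^{m}(\WW_{ijkl}\WW_{m}{}^{jkl})=(\nabla^{m}\WW_{ijkl})\WW_{m}{}^{jkl}+\WW_{ijkl}(\delta\WW)^{jkl}$, so that $f^{i}(\nabla^{m}\WW_{ijkl})\WW_{m}{}^{jkl}=f^{i}\nabla_{i}|\WW|^{2}-f^{i}\WW_{ijkl}(\delta\WW)^{jkl}$, i.e. a $\nabla f\cdot\nabla|\WW|^{2}$ piece plus a $\langle\delta\WW,i_{\nabla f}\WW\rangle$ piece; and the cross terms of $\nabla\WW$ against the $g_{m\bullet}$--block contract the derivative index straight into $\WW$, producing further $\langle\delta\WW,i_{\nabla f}\WW\rangle$ terms. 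Collecting the coefficients in each group and substituting $\nabla f=\nabla u/u$, $|\nabla f|^{2}=u^{-2}|\nabla u|^{2}$ gives the stated formula.

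The main obstacle is purely combinatorial bookkeeping: keeping every sign and index symmetrization correct so that the three coefficients come out to exactly $18$, $-10$ and $16$ after the $u$--rescaling. The only genuinely non-mechanical ingredient is the reduction, via the dimension--four trace identity of Lemma \ref{sumW1}, of the mixed contraction $(\nabla^{m}\WW_{ijkl})\WW_{m}{}^{jkl}$ into $\nabla|\WW|^{2}$ and $\langle\delta\WW,i_{\nabla f}\WW\rangle$; this is what makes the $\langle\delta\WW,i_{\nabla u}\WW\rangle$ term appear, and since it receives contributions from two distinct mechanisms (the $g_{m\bullet}$--block and this algebraic rewriting), assembling them with the right total coefficient is the delicate step.
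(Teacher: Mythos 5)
Your plan is correct and is essentially the paper's own proof: the paper likewise computes $\widetilde{\nabla}\widetilde{\WW}$ termwise from the conformal transformation of the connection in a normal frame, expands the square with the overall $u^{-10}\cdot u^{4}$ scaling, reduces the quadratic-in-$\nabla u$ contractions via the trace identity of Lemma \ref{sumW1}, and handles the mixed term $u_{a}(\nabla_{i}\WW_{abcd})\WW_{ibcd}$ exactly as you do (differentiating that trace identity to split it into $\left\langle{\nabla|\WW|^2,\nabla u}\right\rangle$ and $\left\langle{\delta\WW,i_{\nabla u}\WW}\right\rangle$). Writing the conformal factor as $e^{f}$ rather than working with $u$ directly is an immaterial difference, so all that remains is the sign/coefficient bookkeeping you already flag as the delicate step.
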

\begin{proof}
We observe that, $$|\widetilde{\nabla}\widetilde{\WW}|^2=u^{-10}\Big((\widetilde{\nabla}_{{e}_{i}}\widetilde{\WW})_{{a}{b}{c}{d}}\Big)^2.$$ 

Then, 
\begin{align*}
(\widetilde{\nabla}_{{e}_{i}}\widetilde{\WW})_{{a}{b}{c}{d}}=&\nabla_{i}(u^{2}\WW_{abcd})-u^{2}\Big[\WW(\widetilde{\nabla}_{{e}_{i}}{a}, b, c, d)+\WW(a, \widetilde{\nabla}_{{e}_{i}}{b}, c, d)\\
&+\WW(a, b,\widetilde{\nabla}_{{e}_{i}} {c}, d)+\WW(a, b, c, \widetilde{\nabla}_{{e}_{i}}{d})\Big]\\
=&u^{2}\nabla_{i}\WW_{abcd}-2u u_{i}\WW_{abcd}+u\delta_{ia}\WW_{\nabla{u} bcd}-u\WW_{ibcd}u_{a}+u\delta_{ib}\WW_{a \nabla{u} cd}\\
&-u\WW_{aicd}u_{b}+u\delta_{ic}\WW_{ab \nabla{u}d}-u\WW_{abid}u_{c}+u\delta_{id}\WW_{abc \nabla{u}}-u\WW_{abci}u_{d}.
\end{align*}
Now we sum over all the index,  using Lemma \ref{sumW1}, we have
\begin{align*}
&(\nabla_{i}\WW_{abcd})^2 =|\nabla \WW|^2, &(u_{i}\WW_{abcd})^2= |\nabla u|^2 |\WW|^2,&\\
&(\delta_{ia}\WW_{\nabla{u} bcd})^2= 4(\WW_{\nabla{u} bcd})^2=4|\nabla u|^2 |\WW|^2,
&(\WW_{ibcd}u_{a})^2= |\nabla u|^2 |\WW|^2,&\\
&2\nabla_{i}\WW_{abcd}u_{i}\WW_{abcd}=\left\langle{\nabla |\WW|^2, \nabla u}\right\rangle,&
\nabla_{i}\WW_{abcd} \delta_{ia}\WW_{\nabla{u} bcd}=\left\langle{\delta \WW, i_{\nabla u}\WW}\right\rangle,&\\
&\nabla_{i}\WW_{abcd}\WW_{ibcd}u_{a}=\left\langle{\nabla |\WW|^2, \nabla u}\right\rangle-\left\langle{\delta \WW, i_{\nabla u}\WW}\right\rangle,&
u_{i}\WW_{abcd}\delta_{ia}\WW_{\nabla{u} bcd}=|\nabla u|^2 |\WW|^2,&\\
&u_{i}\WW_{abcd}\WW_{ibcd}u_{a}=|\nabla u|^2 |\WW|^2,&
\delta_{ia}\WW_{\nabla{u} bcd}\WW_{ibcd}u_{a}=|\nabla u|^2 |\WW|^2,&\\
&\delta_{ia}\WW_{\nabla{u} bcd}\delta_{ib}\WW_{a \nabla{u} cd}=-|\nabla u|^2 |\WW|^2,
&\delta_{ia}\WW_{\nabla{u} bcd}\WW_{aicd}u_{b}=0, &\\
&\WW_{\nabla{u} bcd}\delta_{ac}\WW_{ab \nabla{u}d}=\WW_{\nabla{u} bid}\WW_{bi d\nabla{u}}=\frac{1}{2}|\nabla u|^2 |\WW|^2,
&\WW_{ibcd}u_{a}\WW_{aicd}u_{b}=-|\nabla u|^2 |\WW|^2,&\\
&\WW_{ibcd}u_{a}\WW_{abid}u_{c}=\WW_{ib\nabla{u}d}\WW_{\nabla{u}bid}=\frac{1}{2}|\nabla u|^2 |\WW|^2.
\end{align*}
The result then follows immediately.

\end{proof}
We now can calculate the conformal change of Bochner-Weitzenb\"ock's formula.
\begin{corollary}
\label{confWeit1}
Let $(M,g)$ be a four-dimensional Riemmanian manifold, and $\tilde{g}=u^2 g$, for some positive smooth function $u$. If, \begin{equation*}
\label{confWeit2}
h=\Delta |\WW^{+}|^2-2|\nabla \WW^{+}|^2-\SS |\WW^{+}|^2+36\text{det}\WW^{+},
\end{equation*}
then 
\begin{equation*}
\label{confWeit}
u^6 \tilde{h}=h-20u^{-2}|\nabla{u}|^2|\WW^{+}|^2+2u^{-1}|\WW^{+}|^2 \Delta{u}+10u^{-1}\nabla{u}\nabla |\WW^{+}|^2-32u^{-1}\left\langle{\delta \WW^{+}, i_{\nabla u}\WW^{+}}\right\rangle.
\end{equation*}
\end{corollary}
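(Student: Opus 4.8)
The plan is to substitute the conformal transformation laws recorded above, term by term, into
$\tilde{h}=\widetilde{\Delta}|\widetilde{\WW}^{+}|^{2}-2|\widetilde{\nabla}\widetilde{\WW}^{+}|^{2}-\widetilde{\SS}|\widetilde{\WW}^{+}|^{2}+36\det\widetilde{\WW}^{+}$, multiply through by $u^{6}$, observe that the leading terms reassemble $h$, and collect what remains.

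First I would assemble the needed ingredients. Because the Hodge splitting $\Lambda_{2}=\Lambda_{2}^{+}\oplus\Lambda_{2}^{-}$ is conformally invariant in dimension four, $\widetilde{\WW}^{+}=u^{2}\WW^{+}$ exactly as $\widetilde{\WW}=u^{2}\WW$; hence $|\widetilde{\WW}^{+}|^{2}=u^{-4}|\WW^{+}|^{2}$ and $\det\widetilde{\WW}^{+}=u^{-6}\det\WW^{+}$. Since the algebraic identities of Lemma \ref{sumW1} hold equally for $\WW^{\pm}$, the conformal change formula for $|\widetilde{\nabla}\widetilde{\WW}|^{2}$ established in Lemma \ref{confcov} applies verbatim with $\WW$ replaced by $\WW^{+}$, expressing $|\widetilde{\nabla}\widetilde{\WW}^{+}|^{2}$ through $u^{-6}|\nabla\WW^{+}|^{2}$, $u^{-8}|\nabla u|^{2}|\WW^{+}|^{2}$, $u^{-7}\nabla u\nabla|\WW^{+}|^{2}$, and $\langle\delta\WW^{+},i_{\nabla u}\WW^{+}\rangle$. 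I would also use $\widetilde{\SS}=u^{-3}(-6\Delta u+\SS u)$ and the conformal Laplacian law $\widetilde{\Delta}\psi=u^{-2}\bigl(\Delta\psi-2u^{-1}\langle\nabla u,\nabla\psi\rangle\bigr)$ for a scalar function $\psi$, both recorded above in this Appendix.

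Next I would substitute and expand. Applying $\widetilde{\Delta}$ to $|\widetilde{\WW}^{+}|^{2}=u^{-4}|\WW^{+}|^{2}$, the product rule inside $\Delta(u^{-4}|\WW^{+}|^{2})$ yields $u^{-4}\Delta|\WW^{+}|^{2}$ together with a $\Delta u$-term coming from $\Delta u^{-4}$, a $|\nabla u|^{2}$-term, and a $\nabla u\nabla|\WW^{+}|^{2}$-term, and the first-order drift term $-2u^{-1}\langle\nabla u,\nabla(u^{-4}|\WW^{+}|^{2})\rangle$ contributes further $\nabla u\nabla|\WW^{+}|^{2}$ and $|\nabla u|^{2}$ pieces. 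Similarly $\widetilde{\SS}|\widetilde{\WW}^{+}|^{2}=u^{-7}(-6\Delta u+\SS u)|\WW^{+}|^{2}$ produces $u^{-6}\SS|\WW^{+}|^{2}$ plus a $u^{-7}\Delta u|\WW^{+}|^{2}$ correction, while $36\det\widetilde{\WW}^{+}=36u^{-6}\det\WW^{+}$ is exact. After multiplying $\tilde{h}$ by $u^{6}$, the four leading pieces combine to $\Delta|\WW^{+}|^{2}-2|\nabla\WW^{+}|^{2}-\SS|\WW^{+}|^{2}+36\det\WW^{+}=h$, and the remaining $\Delta u$-, $|\nabla u|^{2}$-, $\nabla u\nabla|\WW^{+}|^{2}$-, and $\langle\delta\WW^{+},i_{\nabla u}\WW^{+}\rangle$-contributions, gathered from all four terms, should collapse to the asserted right-hand side.

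The main obstacle is purely the bookkeeping: tracking every power of $u$ when the product rule is applied twice inside $\widetilde{\Delta}|\widetilde{\WW}^{+}|^{2}$, and verifying that the coefficients of $|\nabla u|^{2}|\WW^{+}|^{2}$ and of $\nabla u\nabla|\WW^{+}|^{2}$ arising from $\widetilde{\Delta}|\widetilde{\WW}^{+}|^{2}$, from $-2|\widetilde{\nabla}\widetilde{\WW}^{+}|^{2}$, and from $-\widetilde{\SS}|\widetilde{\WW}^{+}|^{2}$ add up precisely to the constants $-20$ and $+10$. By contrast, the $\Delta u|\WW^{+}|^{2}$-coefficient is the short cancellation $-4+6=2$ between the $\widetilde{\Delta}$ and $\widetilde{\SS}$ contributions, and the $\langle\delta\WW^{+},i_{\nabla u}\WW^{+}\rangle$-term enters only through Lemma \ref{confcov}; both of these are immediate. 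No conceptual difficulty is anticipated beyond this arithmetic.
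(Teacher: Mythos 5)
Your proposal is correct and is essentially the paper's own proof: the paper likewise expands $\widetilde{\Delta}|\widetilde{\WW}^{+}|^{2}=\widetilde{\Delta}\bigl(u^{-4}|\WW^{+}|^{2}\bigr)$ via the conformal Laplacian with its drift term, uses $\widetilde{\SS}|\widetilde{\WW}^{+}|^{2}=u^{-6}\SS|\WW^{+}|^{2}-6u^{-7}|\WW^{+}|^{2}\Delta u$ and $\det\widetilde{\WW}^{+}=u^{-6}\det\WW^{+}$, and then quotes the appendix lemma on $|\widetilde{\nabla}\widetilde{\WW}|^{2}$ (applied verbatim to $\WW^{+}$, as you note) before collecting terms after multiplying by $u^{6}$. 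The coefficient bookkeeping you defer is exactly what the paper's computation supplies, so the two arguments coincide.
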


\begin{proof}
With no confusion we denote $\WW\doteqdot \WW^{+}$ for simplicity and calculate that,
\begin{align*}
\widetilde{\Delta}|\widetilde{\WW}|^2=& \widetilde{\Delta}(u^{-4}|\WW|^2)=u^{-2}(\Delta (u^{-4}|\WW|^2)-2\frac{\nabla{u}}{u}\nabla(u^{-4}|\WW|^2) \\
=& u^{-2}\Big(u^{-4}\Delta|\WW|^2+|\WW|^2 \Delta u^{-4}+2\nabla u^{-4}\nabla |\WW|^2 \\
  & 2|\WW|^2 \frac{\nabla{u}}{u}\nabla u^{-4}-2u^{-4}\frac{\nabla u}{u}\nabla|\WW|^2 \Big)\\
=&u^{-6}\Delta|\WW|^2+20u^{-8}|\WW|^2 |\nabla u|^2-4u^{-7}|\WW|^2\Delta{u} \\
&-10u^{-7}\nabla{u}\nabla|\WW|^2+8u^{-8}|\nabla u|^2|\WW|^2\\
=&u^{-6}\Delta|\WW|^2+28u^{-8}|\WW|^2 |\nabla u|^2-4u^{-7}|\WW|^2\Delta{u}-10u^{-7}\nabla{u}\nabla|\WW|^2.\\
\widetilde{\SS}|\widetilde{\WW}|^2=&u^{-6}\SS|\WW|^2-6u^{-7}|\WW|^2 \Delta{u}.
\end{align*}
The result then follows by combining above equations with Lemma \ref{confcov}.
\end{proof}

\subsection{Along the Ricci Flow}
Inspired by the simplification in Bochner-Weitzenb\"ock formula in Theorem \ref{WSoliton}, we carry out a similar calculation on a Ricci solution. As a consequence, we obtain several interesting evolution equations involving the self-dual part and other components of the curvature operator. First, we state some useful lemmas.

\begin{lemma} 
\label{evolWplus}
Let $(M^4,g(t))$, $0\leq t<T\leq \infty$, be a solution to Ricci flow (\ref{RF}) and the curvature operator is decomposed as in (\ref{curdec}). Then,
\begin{equation}
\ppt \WW^{+}=\Delta\WW^{+}+ 2(\WW^{+})^2+4 (\WW^{+})^{\sharp}+2(CC^{T}-\frac{1}{3}|C|^2I^+).
\end{equation}
\end{lemma}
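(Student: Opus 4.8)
The plan is to run, on a genuine Ricci flow, the same pointwise computation that produced Theorem~\ref{WSoliton} on a soliton, with two simplifications: there is no potential function (so $\Delta$ replaces $\Delta_f$), and there is no pull-back by diffeomorphisms (so the reaction term appears directly, with the opposite sign on the Weyl-quadratic part relative to the soliton case). I would start from the evolution of the full Weyl tensor along the flow, namely equation~(\ref{generalW1}) (equivalently \cite[Prop 1.1]{cm11}) specialized to $n=4$, together with $\partial_t\SS=\Delta\SS+2|\Rc|^2$. Using $A^{+}=\WW^{+}+\frac{\SS}{12}I^{+}$ from Section~2.2,
\[
\ppt\WW^{+}=\ppt A^{+}-\tfrac{1}{12}\big(\Delta\SS+2|\Rc|^2\big)I^{+},
\qquad
\Delta\WW^{+}=\Delta A^{+}-\tfrac{1}{12}(\Delta\SS)\,I^{+},
\]
so the heat parts match and the entire task reduces to identifying the reaction (zeroth-order) term of $\partial_t\WW^{+}$ as an endomorphism of $\Lambda_2^{+}$.

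To do this I would fix a point, put $\WW$ into the Berger normal form~(\ref{normalform}), and use~(\ref{Hodgebase}) to get the orthonormal eigenbasis $\{\alpha_i\}_{i=1}^3$ of $\WW^{+}$ with eigenvalues $\lambda_i=a_i+b_i$, then compute $\big(\partial_t\WW^{+}-\Delta\WW^{+}\big)(\alpha_a,\alpha_b)$ for all $a\le b$ directly from~(\ref{generalW1}). This is the same bookkeeping already carried out in the proof of Theorem~\ref{WSoliton} — the $C_{ijkl}$ table and the $\Rc$-quadratic terms — with the one change that the Weyl-quadratic contribution now enters with a plus sign; on the diagonal it produces $2\lambda_i^2+4\lambda_j\lambda_k$ (with $\{i,j,k\}=\{1,2,3\}$), and these are precisely the entries of $2(\WW^{+})^2+4(\WW^{+})^{\sharp}$, since the matrix square $(\WW^{+})^2$ has eigenvalues $\lambda_i^2$ and the adjugate $(\WW^{+})^{\sharp}$ has eigenvalues $\lambda_j\lambda_k$.

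It then remains to show that the leftover terms — the $-g^{pq}(\Rc_{ip}\WW_{qjkl}+\cdots)$ contributions (which, unlike in the soliton case, do not cancel), the genuinely $\Rc$-quadratic terms of~(\ref{generalW1}), the $\frac{1}{12}\big(\Delta\SS+2|\Rc|^2\big)I^{+}$ correction, and the $\frac{\SS^2}{144}I^{+}$ pieces generated when passing from $A^{+}$ to $\WW^{+}$ — reassemble into $2\big(CC^{T}-\tfrac13|C|^2I^{+}\big)$. Here I would substitute $\Rc=\tfrac{\SS}{4}g+\EE$ and use the Section~2.2 dictionary: under~(\ref{curdec}) the trace-free Ricci $\EE$ is the off-diagonal block $C$, and $|\EE|^2=|\Rc|^2-\tfrac{\SS^2}{4}=4|C|^2=4\operatorname{tr}(CC^{T})$. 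After this substitution all purely $\SS$-dependent terms must cancel and the $\EE$–$\EE$ part must collapse to $CC^{T}$. A built-in consistency check is that the reaction term has to be trace-free, because $\WW^{+}$ and $\Delta\WW^{+}$ are: indeed $\operatorname{tr}\!\big(2(\WW^{+})^2+4(\WW^{+})^{\sharp}\big)=2|\WW^{+}|^2-2|\WW^{+}|^2=0$ since $\operatorname{tr}(\WW^{+})^{\sharp}=-\tfrac12|\WW^{+}|^2$, while $\operatorname{tr}\!\big(CC^{T}-\tfrac13|C|^2I^{+}\big)=|C|^2-|C|^2=0$, which is exactly what forces the $-\tfrac13|C|^2I^{+}$ normalization.

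I expect the main obstacle to be precisely this last step: verifying that, once the scalar-curvature evolution has been subtracted, every mixed Ricci contribution collapses into $CC^{T}$ (up to its trace) with no stray cross term between $\WW^{+}$ and $C$ surviving. This is the four-dimensional coincidence that also underlies Hamilton's block evolution equations for $A^{\pm}$ and $C$; an alternative, arguably cleaner, route is to quote Hamilton's evolution $\partial_t\RR=\Delta\RR+\RR^2+\RR^{\sharp}$ and its block decomposition directly — which gives $\partial_t A^{+}=\Delta A^{+}+2(A^{+})^2+4(A^{+})^{\sharp}+2CC^{T}$ in our normalization — and then peel off $\frac{\SS}{12}I^{+}$ as above, using $|\Rc|^2=\tfrac{\SS^2}{4}+4|C|^2$. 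Tracking traces at every stage is the natural safeguard against arithmetic slips in either derivation.
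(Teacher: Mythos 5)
Your main route has a genuine gap, and it is exactly at the point you flag as the ``main obstacle''. The identity in Lemma \ref{evolWplus} is an operator identity on $\Lambda_2^{+}$ understood with respect to a frame that moves in time (Uhlenbeck's trick; this is how the paper states and later uses it in Theorem \ref{evolW2}): the splitting $\Lambda_2=\Lambda_2^{+}\oplus\Lambda_2^{-}$, the basis (\ref{Hodgebase}), and your eigenbasis $\{\alpha_i\}$ all depend on $g(t)$. If you evaluate the fixed-coordinate evolution (\ref{generalW1}) on the $\alpha_i$ treated as time-independent, the cross terms $-g^{pq}(\Rc_{ip}\WW_{qjkl}+\cdots)$ survive, and they cannot ``reassemble into $2(CC^{T}-\tfrac13|C|^2I^{+})$'': writing $\Rc=\tfrac{\SS}{4}g+\EE$, these terms contribute $-\SS\,\WW^{+}$ plus pieces linear in both $\EE$ and $\WW$, i.e.\ terms \emph{linear} in the Weyl tensor, whereas the claimed reaction term is purely quadratic in $(\WW^{+},C)$ and contains no such piece. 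Those cross terms are precisely what is cancelled by the time derivative of the moving frame (equivalently, of the projection onto $\Lambda_2^{+}$), a correction your plan never introduces; as written, the bookkeeping will not close. So either redo the computation for the Uhlenbeck-evolved frame, tracking $\ppt\alpha_i$ (which absorbs exactly the $\Rc\ast\WW$ terms), or avoid (\ref{generalW1}) altogether.

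The parts of your plan that do work are worth keeping: in an eigenbasis of $\Rc$ one has $2\bigl(CC^{T}-\tfrac13|C|^2I^{+}\bigr)(\alpha_i,\alpha_i)=\tfrac1{12}(|\Rc|^2-\SS^2)+\tfrac12(\Rc\circ\Rc)(\alpha_i,\alpha_i)$ (this is the content of (\ref{WCC})), so the genuinely Ricci-quadratic terms do collapse correctly, the diagonal $2\lambda_i^2+4\lambda_j\lambda_k$ does match $2(\WW^{+})^2+4(\WW^{+})^{\sharp}$, and your trace check is a good safeguard. Your ``alternative, arguably cleaner, route'' is in fact the paper's proof verbatim: quote the block form $\ppt A^{+}=\Delta A^{+}+2(A^{+})^2+4(A^{+})^{\sharp}+2CC^{T}$ from \cite{mw93} (valid in the moving frame), take its trace to get $\ppt(\SS/4)=\Delta\operatorname{tr}A^{+}+2(\operatorname{tr}A^{+})^2+2|C|^2$, and subtract $\tfrac{\SS}{12}I^{+}$ using $\lambda_i=a_i-\tfrac{\SS}{12}$ and $\sum_i\lambda_i=0$. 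If you take that route the lemma follows directly; the frame subtlety is then hidden inside the cited block equation rather than left unaddressed.
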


\begin{remark} Our convention agrees with \cite{mw93} but differs from \cite{HPCO}. 
\end{remark}

\begin{lemma}
For a four-dimensional Riemmanian manifold $(M, g)$, if the curvature is represented as in (\ref{curdec}), then, 
\begin{equation}
\label{WCC}
\left\langle{\WW^{+}, CC^T}\right\rangle=\frac{1}{4}\left\langle{\WW^+, \Rc\circ \Rc}\right\rangle.
\end{equation}
\end{lemma}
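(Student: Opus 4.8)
The plan is to pass to the block decomposition of the curvature operator with respect to $\Lambda^2=\Lambda^2_+\oplus\Lambda^2_-$ and to express everything through the traceless Ricci tensor $\EE$. Recall from (\ref{curdec}) and the discussion after it that $C$ is the off-diagonal block of $\RR$, and from (\ref{curvdecom}) that $\RR=\WW+\tfrac{\SS}{24}g\circ g+\tfrac12\EE\circ g$. Since $g\circ g=2\,\mathrm{Id}_{\Lambda^2}$ and $\WW$ preserves $\Lambda^2_\pm$, the first two summands are block-diagonal, while $\widehat h\circ g$ is block-off-diagonal (it interchanges $\Lambda^2_+$ and $\Lambda^2_-$) for every traceless symmetric $2$-tensor $\widehat h$; this is exactly the structure displayed in Lemma \ref{calculateH}, applied to $\widehat h=\widehat{\He f}$, where only the trace part of $\He f$ contributes to the diagonal block $A$. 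Consequently the off-diagonal block of $\RR$ coincides with that of $\tfrac12\EE\circ g$, so $C=\tfrac12(\EE\circ g)|_{\Lambda^2_-}$ and $C^{T}=\tfrac12(\EE\circ g)|_{\Lambda^2_+}$, whence $CC^{T}=\tfrac14(\EE\circ g)^2|_{\Lambda^2_+}$. As $\WW^{+}$ vanishes on $\Lambda^2_-$, this already gives
\[
\langle \WW^{+},CC^{T}\rangle=\tfrac14\,\langle \WW^{+},(\EE\circ g)^2\rangle .
\]

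Next I would use the elementary operator identity, valid for any symmetric $2$-tensor $h$ acting on $\Lambda^2$ by $(h\circ g)(X\wedge Y)=(hX)\wedge Y+X\wedge(hY)$, namely $(h\circ g)^2=h^2\circ g+h\circ h$, which one checks by expanding both sides on a simple bivector. With $h=\EE$ this gives $(\EE\circ g)^2=\EE^2\circ g+\EE\circ\EE$. Now $\WW^{+}$ is orthogonal to $k\circ g$ for every symmetric $2$-tensor $k$: writing $k=\widehat k+\tfrac14(\mathrm{tr}\,k)g$, the term $\widehat k\circ g$ is block-off-diagonal and hence has zero inner product with the block-diagonal $\WW^{+}$, while $\tfrac14(\mathrm{tr}\,k)(g\circ g)$ is a multiple of $\mathrm{Id}_{\Lambda^2}$ and hence has zero inner product with the trace-free $\WW^{+}$. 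Therefore $\langle\WW^{+},\EE^2\circ g\rangle=0$ and $\langle\WW^{+},(\EE\circ g)^2\rangle=\langle\WW^{+},\EE\circ\EE\rangle$.

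Finally, substituting $\Rc=\EE+\tfrac{\SS}{4}g$ gives $\Rc\circ\Rc=\EE\circ\EE+\tfrac{\SS}{2}\,\EE\circ g+\tfrac{\SS^2}{16}\,g\circ g$; the last two terms are of the form $k\circ g$, hence orthogonal to $\WW^{+}$, so $\langle\WW^{+},\Rc\circ\Rc\rangle=\langle\WW^{+},\EE\circ\EE\rangle$. Chaining the three equalities yields $\langle\WW^{+},CC^{T}\rangle=\tfrac14\langle\WW^{+},\Rc\circ\Rc\rangle$, as claimed.

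The computation is short; the part that requires care is the bookkeeping of normalizations — the conventions $g\circ g=2\,\mathrm{Id}$, $V=\tfrac12\EE\circ g$, and the Kulkarni-Nomizu product — together with making precise the block-off-diagonality of $\widehat h\circ g$ (either read off from Lemma \ref{calculateH}, or deduced from the fact that $\widehat h\circ g$ anticommutes with the Hodge star on $\Lambda^2$). As an alternative one could verify the identity directly in a Berger normal frame diagonalizing $\WW^{+}$, reusing the expression for $(\Rc\circ\Rc)(\alpha_i,\alpha_i)$ already computed in the proof of Theorem \ref{WSoliton}, but the coordinate-free argument above is cleaner and makes transparent why only the traceless Ricci part of $\RR$ enters.
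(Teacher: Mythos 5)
Your argument is correct, and it takes a genuinely different route from the paper's. The paper proves the identity by a direct frame computation: it chooses an orthonormal basis diagonalizing $\Rc$, observes that in the induced bases of $\Lambda_{2}^{\pm}$ the off-diagonal block $C$ is then diagonal with entries such as $C(\alpha_1,\alpha_1)=\tfrac14(\Rc_{11}+\Rc_{22}-\Rc_{33}-\Rc_{44})$, expands $4C(\alpha_i,\alpha_i)^2=(\Rc\circ\Rc)(\alpha_i,\alpha_i)+\big(\text{terms independent of }i\big)$, and kills the $i$-independent terms by pairing against the trace-free $\WW^{+}$. You instead argue frame-free: the off-diagonal block of the curvature operator is exactly $\tfrac12\,\EE\circ g$ (only the traceless Ricci part maps $\Lambda_{2}^{\pm}$ to $\Lambda_{2}^{\mp}$, as in Lemma \ref{calculateH}), so $CC^{T}$ is the $\Lambda_{2}^{+}$-block of $\tfrac14(\EE\circ g)^2$; then the operator identity $(\EE\circ g)^2=\EE^2\circ g+\EE\circ\EE$ together with the orthogonality $\left\langle{\WW^{+},k\circ g}\right\rangle=0$ for every symmetric $k$ reduces both sides to $\tfrac14\left\langle{\WW^{+},\EE\circ\EE}\right\rangle$. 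Both proofs are short and correct (I checked your operator identity and the normalizations $g\circ g=2\,\mathrm{Id}$, $V=\tfrac12\EE\circ g$ against the paper's conventions, where inner products of $(4,0)$ tensors are operator traces, so the bookkeeping is consistent); the paper's computation is more elementary but tied to a Ricci eigenframe and to the observation that $C$ becomes diagonal there, while yours makes the structural mechanism transparent — only $\EE$ can contribute, and all trace/pure-$g\circ g$ parts drop against the trace-free, block-diagonal $\WW^{+}$ — avoids any frame choice, and yields the companion identity $\left\langle{\WW^{-},C^{T}C}\right\rangle=\tfrac14\left\langle{\WW^{-},\Rc\circ\Rc}\right\rangle$ with no extra work. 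One small presentational point: Lemma \ref{calculateH} is stated for $\He{f}$, so when citing it you should note (as you implicitly do) that its block computation is purely algebraic and applies verbatim to an arbitrary symmetric $2$-tensor, or else rely on your alternative justification that $\widehat{h}\circ g$ anticommutes with the Hodge star.
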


Using results above, we arrive at the following statement.

\begin{theorem}
\label{evolW2}
Let $(M^4,g(t)$, $0\leq t<T\leq \infty$, be a closed  solution to the Ricci flow (\ref{RF}), then we have following evolution equation, 
\begin{equation}
(\ppt -\Delta)|\WW^{+}|^2=-2|\nabla \WW^{+}|^2+36\text{det}_{\Lambda^2_{+}}\WW^{+} +\left\langle{\Rc\circ \Rc,\WW^{+}}\right\rangle.
\end{equation}
\end{theorem}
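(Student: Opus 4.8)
The plan is to mirror exactly the computation carried out in the proof of Theorem \ref{WSoliton}, but now on a genuine Ricci flow solution rather than the self-similar one, using the evolution equation for $\WW^{+}$ from Lemma \ref{evolWplus} in place of Lemma \ref{generalw}. First I would write the standard identity
\begin{equation*}
(\ppt - \Delta)|\WW^{+}|^2 = 2\left\langle{\WW^{+}, (\ppt - \Delta)\WW^{+}}\right\rangle - 2|\nabla \WW^{+}|^2,
\end{equation*}
so that the whole problem reduces to evaluating $\left\langle{\WW^{+},(\ppt-\Delta)\WW^{+}}\right\rangle$. By Lemma \ref{evolWplus}, this inner product is
\begin{equation*}
\left\langle{\WW^{+}, 2(\WW^{+})^2 + 4(\WW^{+})^{\sharp} + 2\big(CC^{T} - \tfrac{1}{3}|C|^2 I^{+}\big)}\right\rangle.
\end{equation*}

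Next I would identify each of the three terms using the algebraic structure in dimension four. For the first two, diagonalizing $\WW^{+}$ in the Berger normal form (\ref{normalform}) with eigenvalues $\lambda_1,\lambda_2,\lambda_3$ summing to zero gives $\left\langle{\WW^{+},(\WW^{+})^2}\right\rangle = \sum_i \lambda_i^3 = 3\det\WW^{+}$, and the cross-term $(\WW^{+})^{\sharp}$ (the Lie-algebra square, whose diagonal entries are $\lambda_j\lambda_k$ for $\{i,j,k\}=\{1,2,3\}$) contributes $\left\langle{\WW^{+},(\WW^{+})^{\sharp}}\right\rangle = \sum_i \lambda_i\lambda_j\lambda_k = 3\det\WW^{+}$ as well — this is precisely the bookkeeping that produced the coefficient $-18\det\WW^{+}$ inside the proof of Theorem \ref{WSoliton}, so $2\langle\WW^{+},(\WW^{+})^2\rangle + 4\langle\WW^{+},(\WW^{+})^{\sharp}\rangle = 18\det\WW^{+}$. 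For the last term, $\langle \WW^{+}, I^{+}\rangle = \operatorname{tr}\WW^{+} = 0$ since $\WW^{+}$ is trace-free, which kills the $-\tfrac13|C|^2 I^{+}$ piece, and $\langle\WW^{+}, CC^{T}\rangle = \tfrac14\langle\WW^{+},\Rc\circ\Rc\rangle$ by (\ref{WCC}). Assembling, $\left\langle{\WW^{+},(\ppt-\Delta)\WW^{+}}\right\rangle = 18\det\WW^{+} + \tfrac12\langle\Rc\circ\Rc,\WW^{+}\rangle$, and multiplying by $2$ and subtracting $2|\nabla\WW^{+}|^2$ yields the claimed identity.

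The step I expect to require the most care is the normalization and sign conventions for the quadratic operators $(\WW^{+})^2$ and $(\WW^{+})^{\sharp}$: the factor "$36\det$" versus "$18\det$" and the "$1/4$" in (\ref{WCC}) all depend on the convention (noted in Section 2) that the norm of a $(4,0)$-tensor viewed as an operator on $\Lambda_2$ differs by a $1/4$ factor from the tensor norm, and on the remark after Lemma \ref{evolWplus} that the convention here agrees with \cite{mw93} rather than \cite{HPCO}. I would therefore double-check the coefficient in $2\langle\WW^{+},(\WW^{+})^2\rangle+4\langle\WW^{+},(\WW^{+})^{\sharp}\rangle$ against the parallel computation in the proof of Theorem \ref{WSoliton}: since that proof already extracted $-36\det\WW^{+}$ from the same $C_{ijkl}$-combinatorics, consistency between the two derivations is the real check, and no genuinely new difficulty arises beyond careful transcription.
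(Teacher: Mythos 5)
Your proposal is correct and follows essentially the same route as the paper's own proof: reduce to $2\left\langle{\WW^{+},(\ppt-\Delta)\WW^{+}}\right\rangle-2|\nabla\WW^{+}|^2$, insert Lemma \ref{evolWplus}, evaluate $\left\langle{\WW^{+},(\WW^{+})^2}\right\rangle=\left\langle{\WW^{+},(\WW^{+})^{\sharp}}\right\rangle=3\,\text{det}\,\WW^{+}$ via diagonalization, kill the $I^{+}$ term by tracelessness, and use (\ref{WCC}) for the $CC^{T}$ term, yielding exactly the stated coefficients. The only cosmetic difference is that the paper explicitly invokes a moving orthonormal frame (Uhlenbeck's trick) so that Lemma \ref{evolWplus} applies, which you leave implicit.
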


\begin{remark} The Weyl tensor is considered as the traceless part of the curvature operator (module out the Ricci and scalar components). Thus, it is interesting to compare the above calculation with the evolution equation for the traceless part of the Ricci curvature $h=|\EE|^2$ (see \cite{cao11}),
\begin{align*}
(\ppt -\Delta)h^2=&-2|\nabla\Rc|^2+\frac{|\nabla\SS|^2}{2}+\frac{2}{3}\SS h-4\EE^3+4\WW(\EE,\EE)\\
=&-2\nabla h\nabla (\ln\SS)-\frac{2}{\SS^2}|\SS \nabla \Rc-\Rc\nabla\SS|^2\\
&+2h^2(2|\nabla(\ln\SS)|^2+\frac{\SS}{3})-4\EE^3+4\WW(\EE,\EE).
\end{align*}

\end{remark}

A consequence of Theorem \ref{evolW2} is the following statement. 
\begin{corollary}
Let $(M,g(t))$, $0\leq t<T\leq \infty$, be a closed solution to the Ricci flow (\ref{RF}), then
\begin{align}
(\ppt -\Delta)(\frac{|\WW^{+}|^2}{\SS^2})=&-\frac{2}{\SS^4}|\SS\nabla \WW^{+}-\WW^{+}\nabla \SS|^2+\left\langle{\nabla{(\frac{|\WW^{+}|^2}{\SS^2})},\nabla \ln \SS^2}\right\rangle \nonumber\\
&+ 36\frac{\text{det}_{\Lambda^2_{+}}\WW^{+}}{\SS^2}+\frac{\left\langle{\Rc\circ \Rc,\WW^{+}}\right\rangle}{\SS^2}-4\frac{|\WW^{+}|^2|\Rc|^2}{\SS^3}.
\end{align}

\end{corollary}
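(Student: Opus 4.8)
The plan is to deduce this evolution equation from Theorem \ref{evolW2} together with the classical evolution of the scalar curvature along the Ricci flow, by means of the heat-operator quotient rule followed by an algebraic reorganization of the first-order terms.

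First I would record the two inputs. Theorem \ref{evolW2} gives
\[(\ppt-\Delta)|\WW^{+}|^{2}=-2|\nabla\WW^{+}|^{2}+36\,\text{det}_{\Lambda^{2}_{+}}\WW^{+}+\left\langle{\Rc\circ\Rc,\WW^{+}}\right\rangle,\]
while $\ppt\SS=\Delta\SS+2|\Rc|^{2}$ yields, after multiplying by $2\SS$ and subtracting $2|\nabla\SS|^{2}$,
\[(\ppt-\Delta)\SS^{2}=4\SS|\Rc|^{2}-2|\nabla\SS|^{2}.\]

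Next I would invoke the quotient rule: for any positive function $b$ and any function $a$,
\[(\ppt-\Delta)\frac{a}{b}=\frac{1}{b}(\ppt-\Delta)a-\frac{a}{b^{2}}(\ppt-\Delta)b+\frac{2}{b}\left\langle{\nabla\tfrac{a}{b},\nabla b}\right\rangle,\]
which follows by writing $a=(a/b)\,b$ and expanding the Laplacian of the product (this is valid at each fixed time, so the time-dependence of $\Delta$ causes no trouble). Taking $a=|\WW^{+}|^{2}$, $b=\SS^{2}$ and substituting the two evolution equations above, one obtains $(\ppt-\Delta)(|\WW^{+}|^{2}/\SS^{2})$ as a sum in which the terms $\SS^{-2}(36\,\text{det}_{\Lambda^{2}_{+}}\WW^{+}+\langle\Rc\circ\Rc,\WW^{+}\rangle)$ and $-4|\WW^{+}|^{2}|\Rc|^{2}\SS^{-3}$ already appear in final form, together with the first-order terms $-2\SS^{-2}|\nabla\WW^{+}|^{2}$, $\;2|\WW^{+}|^{2}|\nabla\SS|^{2}\SS^{-4}$, and $2\langle\nabla(|\WW^{+}|^{2}/\SS^{2}),\nabla\ln\SS^{2}\rangle$.

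The final step is to repackage those first-order terms as a perfect square plus a drift term. Using that the full contraction of $\nabla\WW^{+}$ against $\WW^{+}\otimes\nabla\SS$ equals $\tfrac12\langle\nabla|\WW^{+}|^{2},\nabla\SS\rangle$, I would expand
\[\big|\SS\nabla\WW^{+}-\WW^{+}\otimes\nabla\SS\big|^{2}=\SS^{2}|\nabla\WW^{+}|^{2}-\SS\left\langle{\nabla|\WW^{+}|^{2},\nabla\SS}\right\rangle+|\WW^{+}|^{2}|\nabla\SS|^{2};\]
then, writing $\nabla(|\WW^{+}|^{2}/\SS^{2})=\SS^{-2}\nabla|\WW^{+}|^{2}-2|\WW^{+}|^{2}\SS^{-3}\nabla\SS$ and $\nabla\ln\SS^{2}=2\SS^{-1}\nabla\SS$, a direct computation shows the three first-order contributions combine precisely into $-2\SS^{-4}|\SS\nabla\WW^{+}-\WW^{+}\otimes\nabla\SS|^{2}+\langle\nabla(|\WW^{+}|^{2}/\SS^{2}),\nabla\ln\SS^{2}\rangle$, which is the asserted shape. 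I do not expect a genuine conceptual obstacle; the only delicate point is bookkeeping the various powers of $\SS$ and the factor relating $\nabla\ln\SS$ to $\nabla\ln\SS^{2}$, so that the drift coefficient comes out to $1$ and the gradient term appears as a perfect square.
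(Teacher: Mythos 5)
Your proposal is correct and takes essentially the same route as the paper's own proof: apply the heat-operator quotient rule with $A=|\WW^{+}|^{2}$, $B=\SS^{2}$, substitute Theorem \ref{evolW2} together with $(\ppt-\Delta)\SS^{2}=4\SS|\Rc|^{2}-2|\nabla\SS|^{2}$, and recombine the first-order terms into the perfect square $-\frac{2}{\SS^{4}}|\SS\nabla\WW^{+}-\WW^{+}\otimes\nabla\SS|^{2}$ plus the drift $\left\langle{\nabla(|\WW^{+}|^{2}/\SS^{2}),\nabla\ln\SS^{2}}\right\rangle$. The only difference is that you write out explicitly the algebraic regrouping that the paper leaves to the reader.
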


\begin{remark}
On a GRS, the equation becomes
\begin{align}
-\Delta_{f}(\frac{|\WW^{+}|^2}{\SS^2})=&-\frac{2}{\SS^4}|\SS\nabla \WW^{+}-\WW^{+}\nabla \SS|^2+\left\langle{\nabla{(\frac{|\WW^{+}|^2}{\SS^2})},\nabla \ln \SS^2}\right\rangle \nonumber\\
&+ 36\frac{\text{det}_{\Lambda^2_{+}}\WW^{+}}{\SS^2}+\frac{\left\langle{\Rc\circ \Rc,\WW^{+}}\right\rangle}{\SS^2}-4\frac{|\WW^{+}|^2|\Rc|^2}{\SS^3}.
\end{align}
\end{remark}

An immediate application of the computation above and the maximum principle is the result below. 
\begin{proposition} Let $(M,g(t))$, $0\leq t<T\leq \infty$, be a closed solution to the Ricci flow (\ref{RF}). If $\text{det}_{\Lambda^2_{+}}\WW^{+}$ is nonpositive along the Ricci flow then there exists a constant $C=C(g(0))$, such that $\frac{|\WW^{+}|}{\SS}<C$ is preserved along the flow.
\end{proposition}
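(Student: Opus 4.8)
The plan is to run the scalar maximum principle on the quantity $\phi:=|\WW^{+}|^2/\SS^2$ along the flow, feeding in the evolution equation from the Corollary immediately above. To make sense of $\phi$ I would first observe that we may assume $\SS>0$ on $[0,T)$: on a closed solution one has $\ppt\SS=\Delta\SS+2|\Rc|^2\ge\Delta\SS+\frac{1}{2}\SS^2$, so if $\min_M\SS(0)\ge 0$ then $\SS\ge 0$ is preserved and, by the strong maximum principle, either $\SS>0$ for $t>0$ or the solution is Ricci flat (in which case $\WW^{+}\equiv 0$ and nothing is to prove). Granting $\SS>0$, $\phi$ is a smooth function on $M\times[0,T)$ and the Corollary gives
\begin{align*}
(\ppt-\Delta)\phi=&-\frac{2}{\SS^4}|\SS\nabla\WW^{+}-\WW^{+}\nabla\SS|^2+\left\langle{\nabla\phi,\nabla\ln\SS^2}\right\rangle\\
&+36\frac{\text{det}_{\Lambda^2_{+}}\WW^{+}}{\SS^2}+\frac{\left\langle{\Rc\circ\Rc,\WW^{+}}\right\rangle}{\SS^2}-4\frac{|\WW^{+}|^2|\Rc|^2}{\SS^3}.
\end{align*}

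Next I would examine the right-hand side at a point where $\phi(\cdot,t)$ attains its spatial maximum. The first term is pointwise $\le 0$; the gradient term $\left\langle{\nabla\phi,\nabla\ln\SS^2}\right\rangle$ vanishes there since $\nabla\phi=0$; and the hypothesis $\text{det}_{\Lambda^2_{+}}\WW^{+}\le 0$ makes the third term $\le 0$. The two remaining terms involve $\Rc$, and the point is that they combine favourably when $\phi$ is large. Using the algebraic estimate $|\Rc\circ\Rc|\le c_4|\Rc|^2$ for a universal constant $c_4$ (the Kulkarni--Nomizu product being bounded in terms of the tensor norm) together with Cauchy--Schwarz, $\left\langle{\Rc\circ\Rc,\WW^{+}}\right\rangle\le c_4|\Rc|^2|\WW^{+}|$, and rewriting through $\phi$ (so $|\WW^{+}|/\SS=\sqrt\phi$ and $|\WW^{+}|^2/\SS^3=\phi/\SS$), one finds
\[
\frac{\left\langle{\Rc\circ\Rc,\WW^{+}}\right\rangle}{\SS^2}-4\frac{|\WW^{+}|^2|\Rc|^2}{\SS^3}\le\frac{|\Rc|^2}{\SS}\bigl(c_4\sqrt\phi-4\phi\bigr);
\]
since $\SS>0$ forces $|\Rc|^2/\SS\ge\SS/4>0$ in dimension four, this is strictly negative as soon as $\phi>(c_4/4)^2$.

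Putting the two paragraphs together, at any spatial maximum point of $\phi(\cdot,t)$ with value larger than $(c_4/4)^2$ we get $(\ppt-\Delta)\phi<0$. I would then conclude by the standard scalar maximum principle applied to the Lipschitz function $t\mapsto\max_M\phi(\cdot,t)$, whose forward difference quotient is dominated by $(\ppt-\Delta)\phi$ at a maximizing point (the $\Delta\phi$ term being $\le 0$ there): $\max_M\phi$ cannot increase past $\max\{(c_4/4)^2,\max_M\phi|_{t=0}\}$, so $|\WW^{+}|/\SS$ stays below $C:=\bigl(\max\{(c_4/4)^2,\max_M\phi|_{t=0}\}\bigr)^{1/2}$, a constant depending only on $g(0)$ (one enlarges $C$ infinitesimally for the strict inequality). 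The only step I expect to need real care is the manipulation of the two Ricci terms in the second paragraph: one must verify that the indefinite contribution $\left\langle{\Rc\circ\Rc,\WW^{+}}\right\rangle/\SS^2$ is genuinely of lower order in $\phi$ than the dissipative term $-4|\WW^{+}|^2|\Rc|^2/\SS^3$, so that the positivity of the coefficient $|\Rc|^2/\SS$ (which is where $\SS>0$ is used) lets it be absorbed once $\phi$ is large.
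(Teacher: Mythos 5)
Your argument is correct and follows essentially the same route as the paper: apply the maximum principle to $|\WW^{+}|^2/\SS^2$ via the corollary's evolution equation, discard the gradient and determinant terms at a spatial maximum, and absorb $\left\langle{\Rc\circ\Rc,\WW^{+}}\right\rangle/\SS^2$ into $-4|\WW^{+}|^2|\Rc|^2/\SS^3$ once the ratio is large --- the paper merely sharpens your generic bound to $\left\langle{\Rc\circ\Rc,\WW^{+}}\right\rangle\le\frac{\sqrt{3}}{2}|\WW^{+}||\Rc|^2$ by diagonalizing $\Rc$, which yields the explicit threshold $3/64$ but changes nothing essential. One small slip in your preliminary reduction: Ricci-flatness does not force $\WW^{+}\equiv 0$ (e.g.\ a K3 surface), but that case has $\SS\equiv 0$ and so lies outside the proposition's implicit standing assumption $\SS>0$, which the paper itself leaves tacit.
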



\def\cprime{$'$}
\bibliographystyle{plain}
\bibliography{bio}

\end{document}